\documentclass{sinst}

\usepackage{amsbsy,amsfonts,amsmath,amssymb,amsthm}
\usepackage{array}
\usepackage{bm}
\usepackage{color}
\usepackage{enumerate,enumitem}
\usepackage{mathrsfs}
\usepackage{latexsym}
\usepackage[colorlinks=true]{hyperref}
\usepackage{mathtools}
\mathtoolsset{showonlyrefs, showmanualtags}
\hypersetup{urlcolor=black, citecolor=black, linkcolor=black}

\definecolor{wineRed}{rgb}{0.7,0,0.3}
\definecolor{grandBleu}{rgb}{0,0,0.8}
\definecolor{darkGreen}{rgb}{0,0.4,0}
\definecolor{blueViolet}{rgb}{0.4,0,1.0}
\definecolor{bloodOrange}{rgb}{0.85,0.05,0}
\definecolor{mycolor}{rgb}{0.8,0,0.2}

\usepackage[textsize=small]{todonotes}
\usepackage{cite}

\usepackage{comment}

\allowdisplaybreaks[4]

\DeclareMathAlphabet{\mathpzc}{OT1}{pzc}{m}{it}

\usepackage[textsize=small]{todonotes}
\numberwithin{equation}{section}

\theoremstyle{plain}
\newtheorem{main theorem}{Main Theorem} 
\newtheorem{lemma}{Lemma}[section]
\newtheorem{key lemma}{Key-Lemma}

\theoremstyle{definition}
\newtheorem{definition}{Definition}
\newtheorem{remark}{Remark}

\newtheorem{example}{Example}

\def\N{\mathbb{N}}
\def\R{\mathbb{R}}

\def\ds{\displaystyle}

\DeclareMathOperator{\diver}{div}

\def\ae{\text{ a.e. }}
\def\aein{\text{ a.e. in }}
\def\aeon{\text{ a.e. on }}
\def\as{\text{ as }}
\def\In{\text{ in }}

\def\If{\text{ if }}

\def\forae{\text{ for a.e. }}
\def\II{\text{(I\hspace{-1pt}I)}~}

\def\weakly{\text{ weakly}}
\def\weaklystar{\text{ weakly-}\ast}

\def\otherwise{\text{ otherwise}}

\def\loc{\text{loc}}

\def\ds{\displaystyle}
\def\norm#1{\left|#1\right|}

\def\autom#1#2{\left\{#1\,\middle|\,#2\right\}}

\begin{document}
\label{page:t}
\thispagestyle{plain}

\title{\vspace{-3ex}
LARGE-TIME BEHAVIOR OF PSEUDO-PARABOLIC EQUATIONS ASSOCIATED WITH \\ GENERALIZED TOTAL VARIATION ENERGIES \footnotemark[1]
\vspace{-1.5ex}
}
\author{Daisuke Kubota
}
\affiliation{Division of Mathematics and Informatics, Department of Mathematics and Informatics, \\ Graduate School of Science and Engineering, Chiba University, \\ 1--33, Yayoi-cho, Inage-ku, 263--8522, Chiba, Japan}
\email{26wd0102@student.gs.chiba-u.jp
\vspace{-1.5ex}
}
\vspace{-3ex}
\sauthor{Daiki Mizuno
}
\saffiliation{Waseda Junior and Senior High School, \\ 62 Babashita-cho, Shinjuku-ku, 162--8654, Tokyo, Japan}
\semail{dmizuno.m@gmail.com
\vspace{-1.5ex}
}
\vspace{-3ex}
\tauthor{Ken Shirakawa
}
\taffiliation{Department of Mathematics, Faculty of Education, Chiba University \\ 1--33 Yayoi-cho, Inage-ku, 263--8522, Chiba, Japan}
\temail{sirakawa@faculty.chiba-u.jp
\vspace{-1.5ex}
}
\vspace{-3ex}
\fauthor{Naotaka Ukai
}
\faffiliation{Division of Mathematics and Informatics, Department of Mathematics and Informatics, \\ Graduate School of Science and Engineering, Chiba University, \\ 1--33, Yayoi-cho, Inage-ku, 263--8522, Chiba, Japan}
\femail{24wd0101@student.gs.chiba-u.jp}
\footcomment{
$^*$\,This work was supported by JST SPRING, Grant Number JPMJSP2109, and JSPS KAKENHI Grant Number JP26K06860.\\
AMS Subject Classification: 
35B40, 
35J75, 
35K70. 
\\
Keywords: pseudo-parabolic equation, generalized total variation energy, well-posedness, large-time behavior
}
\maketitle
\noindent
{\bf
Abstract.} 
In this paper, we study the well-posedness and large-time behavior of a pseudo-parabolic problem associated with a generalized total variation energy in the $BV$-framework. A main mathematical issue arises from the mismatch between the Sobolev regularity of solutions at finite times and the $BV$-structure of the corresponding steady-state problem. We prove the existence and uniqueness of solutions and investigate the relationship between their large-time behavior and solutions to the steady-state problem. Moreover, our results include the convergence of the solution trajectory to the unique steady-state solution under a typical setting arising in image processing.
\pagebreak

\section*{Introduction}
Let $N \in \N$ be a fixed space dimension, and let $\Omega \subset \R^N$ be a bounded domain with boundary $\Gamma := \partial \Omega$. In particular, when $N > 1$, we assume that $\Gamma$ has at least Lipschitz regularity and denote by $n_\Gamma$ the unit outer normal to $\Gamma$.

For a fixed parameter $\varepsilon \in[0,1)$, we first consider the following nonlinear boundary value problem, denoted by $(\mathrm{S})_\varepsilon$:
\begin{gather}\label{E-L_Eq}
    (\mathrm{S})_\varepsilon~~~~
    \begin{cases}
        \ds
        -\diver\left(
            \frac{Dw}{\sqrt{\varepsilon^2 + |Dw|^2}}
        \right)
        + g(w) = f_\infty(x),
        ~\text{a.e.  $ x \in \Omega$,}
        \\[3ex]
        \bigl(
            \frac{Dw}{\sqrt{\varepsilon^2 + |Dw|^2}}
        \bigr) \cdot n_\Gamma = 0,
        ~\text{a.e. on $\Gamma$.}
    \end{cases}
\end{gather}
The problem $(\mathrm{S})_\varepsilon$ is associated with the Euler--Lagrange equation for the following energy functional $  \mathcal{F}_\varepsilon : L^2(\Omega) \longrightarrow (-\infty,\infty] $, defined by
\begin{gather}
    \mathcal{F}_\varepsilon(w)
    :=
    \begin{cases}
        \multicolumn{2}{l}{\ds
        \int_\Omega \sqrt{\varepsilon^2 + |Dw|^2}
        + \int_\Omega G(w)\,dx
        - \int_\Omega f_\infty w\,dx,}
        \\
        & \text{if $w \in D(\mathcal{F}_\varepsilon) := BV(\Omega) \cap L^2(\Omega)$,}
        \\[1ex]
        \infty,
        & \text{otherwise.}
    \end{cases}
\end{gather}
In this context, $ f_\infty : \Omega \longrightarrow \R $ is a given function, and $ g : \R \longrightarrow \R $ is a given perturbation with a potential function $ G : \R \longrightarrow \R $. 
For $w \in BV(\Omega)$, the quantity $ \int_\Omega \sqrt{\varepsilon^2 + |Dw|^2} $ denotes the generalized total variation defined by (cf. \cite{MR0775682}):
\begin{gather}\label{TV_eps}
    \int_\Omega \sqrt{\varepsilon^2 +|Dw|^2} 
    := \sup \left\{ 
    \begin{array}{l|l}
        \ds \int_\Omega \bigl( \varepsilon \varphi_0 +w \diver \bm{\varphi} \bigr) \, dx 
        &
        \parbox{4cm}{
            $ [\varphi_0, \bm{\varphi}] \in [C_\mathrm{c}^1(\Omega)]^{1 +N} $, 
            \\[1ex] 
            $ \bigl| [\varphi_0, \bm{\varphi}] \bigr|_{\R^{1 +N}} \leq 1 $ on $ \Omega $
        }
    \end{array} \right\}.
\end{gather}
The principal characteristic of the above energy $ \mathcal{F}_\varepsilon $ lies in the linear growth part of the generalized total variation. Such linear growth energy plays an important role in various variational problems arising in materials science and image processing (cf. \cite{
    MR2033382,
    MR2368971,
    MR1883415,
    MR2139202,
    MR2178065,
    MR3363401,
    rudin1994total,
    MR2028838,
    MR2028858,
    MR2170510,
    MR2746654,
    MR1712447,
    MR1847840,
    MR1851862,
    MR2571495,
    MR2232843
}). Accordingly, the Euler--Lagrange equation (S)$_\varepsilon$ has also attracted considerable attention as a problem characterizing stationary points of the energy. Moreover, (S)$_\varepsilon$ arises as the steady-state problem in various time-evolution models.

For instance, in image processing, $g$ is often taken to be positive and linear, while $f_\infty$ represents observed image data contaminated by noise. In this setting, the energy $\mathcal{F}_\varepsilon$ can be interpreted as a total variation type energy for image restoration. Its minimization problem and the associated time-evolution problems have been extensively studied as mathematical models for image denoising (cf. 
\cite{MR2033382,
    MR2368971,
    MR1883415,
    MR2139202,
    MR2178065,
    MR3363401,
    rudin1994total,
    MR2028838,
    MR2028858,
    MR2170510
}). On the other hand, in materials science, Visintin (cf. \cite{MR1423808}) proposed a diffusive interface model of phase-transition which allows discontinuities in the phase-field and thereby enables the description of sharp-interface-type structures. A similar idea was incorporated into a mathematical model for grain-boundary motion by Kobayashi et al. (cf. \cite{MR1752970,MR1794359}). In these models, problems corresponding to (S)$_\varepsilon$ are regarded as steady-state problems characterizing possible limiting states of the time evolution.

Among previous studies on this class of problems, the case $\varepsilon=0$ has been particularly well investigated (cf. \cite{
    MR2033382,
    MR2368971,
    MR1883415,
    MR2139202,
    MR2178065,
    MR3363401,
    rudin1994total,
    MR2028838,
    MR2028858,
    MR2170510,
    MR2746654,
    MR1712447,
    MR1847840,
    MR1851862,
    MR2232843
}). In this case, the principal nonlinear diffusion term $-\diver \bigl(\frac{Dw}{|Dw|}\bigr) $ corresponds to a very singular case. Owing to this singular diffusion structure and the linear-growth nature of the energy, the Sobolev space $H^1(\Omega)$ is not sufficient for treating the steady-state problem $(S)_0$ in general. In fact, spatially discontinuous $BV$-solutions are known to exist, including explicit examples (cf. 
\cite{
    MR2033382,
    MR2368971,
    MR1883415,
    MR2139202,
    MR2178065,
    MR1847840,
    MR1851862,
    MR4352617,
    MR2232843
}).

On the other hand, when $\varepsilon \in (0, 1)$, the corresponding diffusion $ \textstyle -\diver \bigl( \frac{Dw}{\sqrt{\varepsilon^2 +|Dw|^2}} \bigr) $ no longer falls into the very singular case due to the regularizing effect of $ \varepsilon \in (0, 1) $. Nevertheless, the linear-growth property of the generalized total variation remains unchanged. Therefore, even for $\varepsilon \in (0, 1)$, the natural space for treating solutions remains $BV(\Omega)$, and examples of discontinuous $BV$-solutions with a jump part have in fact been reported for the steady-state problem (S)$_\varepsilon$ (cf. \cite{
    MR2571495
}). 
Thus, for both $\varepsilon=0$ and $\varepsilon>0$, the essential feature of (S)$_\varepsilon$ lies in its nonsmooth variational structure on $BV(\Omega)$, beyond the usual Sobolev-space framework.

In this paper, we focus on the following pseudo-parabolic initial-boundary problem, denoted by (P)$_\varepsilon$:
\begin{gather}\label{(P)_eps}
    (\mathrm{P})_\varepsilon~~~~
    \begin{cases}
        \ds
        \alpha(t, x) \partial_t u -\diver\biggl(
            \frac{\nabla u}{\sqrt{\varepsilon^2 + |\nabla u|^2}}
            +\beta(t, x) \nabla \partial_t u
            \biggr)
        + g(u) = f(t, x),
        \\[2ex]
        \qquad \text{a.e. in $ (t, x) \in Q := (0, \infty) \times \Omega$,}
        \\[2ex]
        \bigl(
            \frac{\nabla u}{\sqrt{\varepsilon^2 + |\nabla u|^2}} +\beta(t, x) \nabla \partial_t u
        \bigr) \cdot n_\Gamma = 0,
        ~\text{a.e. $ (t, x) \in \Sigma := (0, \infty) \times \Gamma$,}
        \\[2ex]
        u(0, x) = u_0(x), ~\mbox{a.e. $ x \in \Omega $,}
    \end{cases}
\end{gather}
as a time-evolution problem having $(\mathrm{S})_\varepsilon$ as its
corresponding steady-state problem. 
Here, $f:Q\longrightarrow\R$ is a forcing term,  and $u_0:\Omega\longrightarrow\R$ is the initial datum. Additionally, $\alpha:Q\longrightarrow(0,\infty)$ is a given weight function, and $\beta:Q\longrightarrow(0,\infty)$ is a given Lipschitz weight function satisfying $ \partial_t \beta \leq 0 $ in $ Q $.

An important feature of the problem (P)$_\varepsilon$ is that the coefficients $\alpha$ and $\beta$ are allowed to depend on both time and space. Here, $\alpha(t,x)$ represents a local weight for the time evolution, whereas $\beta(t,x)$ determines the strength of the pseudo-parabolic term $-\diver\bigl(\beta(t,x)\nabla\partial_tu\bigr)$. The pseudo-parabolic term provides additional spatial regularity for the nonlinear diffusion associated with the generalized total variation on $ BV(\Omega) $, given in \eqref{TV_eps}. 
By allowing $\beta$ to depend on both time and space, the framework can describe not only spatially non-uniform regularization effects but also situations in which their strength varies and can be adjusted along the time evolution. 

Note that the well-posedness of (P)$_\varepsilon$ does not follow directly from the existing results for $\alpha$ and $\beta$ constant in time (cf. \cite{aiki2023class}). In particular, the time-dependence of $\alpha$ and $\beta$ affects both the energy estimates and the uniqueness argument. It is therefore necessary to establish the existence and uniqueness of solutions under the present general setting.

Now, we turn to the main issue of this paper, namely, the large-time behavior of (P)$_\varepsilon$. 
When we assume:
\begin{itemize}
    \item[$\sharp$0)]$f_\infty$ corresponds to some large-time limit of the forcing $f$ in an appropriate sense, 
\end{itemize}
\noindent
the Euler--Lagrange equation (S)$_\varepsilon$ naturally arises as the steady-state problem associated with the large-time behavior of solutions to (P)$_\varepsilon$. 
Then, on every finite time interval, the regularizing effect of the pseudo-parabolic term ensures that the solution to (P)$_\varepsilon$ has the regularity of the usual Sobolev space $H^1(\Omega)$. In contrast, as mentioned above, the natural solution space for the corresponding steady-state problem (S)$_\varepsilon$ is $BV(\Omega)$, and some steady-state solutions may have spatial discontinuities and thus fail to belong to $H^1(\Omega)$.

Therefore, particularly from the viewpoint of image processing, the pseudo-parabolic dynamics governed by (P)$_\varepsilon$ has an attractive feature for applications. Namely, it maintains spatial regularity at every finite time, while allowing its steady states to have discontinuities corresponding to image contours and edges.

However, this feature also gives rise to an essential difficulty in the mathematical analysis, due to the intrinsic mismatch between the solution space of the time-evolution problem at finite times, and the natural solution space of the steady-state problem describing its large-time limit. Indeed, the energy estimates yield the strong dissipation property $\partial_tu(\cdot+s)\to0$ in $L^2(0,1;L^2(\Omega))$ as $s\to\infty$. Nevertheless, a time-uniform $H^1(\Omega)$-estimate for the state $u(\cdot+s)$ is not available in general. Consequently, the large-time limit cannot be identified as a solution to (S)$_\varepsilon$ by the standard Minty's trick based on monotonicity of nonlinear diffusions (cf. \cite{MR348562,MR2582280}).

The principal aim of this paper is to clarify the relationship between the large-time behavior of solutions to (P)$_\varepsilon$ and the steady-state problem (S)$_\varepsilon$ under this mismatch between the $H^1(\Omega)$-regularity at finite times and the $BV(\Omega)$-structure in the large-time limit. 

Our main results can be summarized as follows.
\\[-3.5ex]
\begin{description}
\item[{\boldmath\underline{Existence and uniqueness of global-in-time solution.}}]~~~~
    \vspace{-1ex}
\item[~~Main Theorem 1:]
    We establish the global existence and uniqueness of solutions to (P)$_\varepsilon$ with time-space-dependent coefficients $\alpha$ and $\beta$. 
    \vspace{-1ex}
\item[{\boldmath\underline{Large-time behavior of solution.}}]
    We establish the following three levels of large-time behavior for the solution $u$ to the problem (P)$_\varepsilon$.
    \vspace{-1ex}
\item[~~Main Theorem 2:]
    Under the basic assumptions including $\sharp$0) and $ \partial_t \beta \leq 0 $ in $ Q $, at least one $\omega$-limit point of $u$ as $t \to \infty$ is shown to be a solution to the steady-state problem (S)$_\varepsilon$.
    \vspace{-1ex}
\item[~~Main Theorem 3:]
Under the additional assumption that $\beta$ decays exponentially in time, every $\omega$-limit point of $u$ is shown to be a solution to the steady-state problem (S)$_\varepsilon$.
    \vspace{-1ex}
\item[~~Main Theorem 4:]
Under additional assumptions ensuring the uniqueness of the steady-state solution, 
        the whole trajectory $ \{ u(t) \}_{t \geq 0} \subset H^1(\Omega) $ of solution $ u $ is shown to converge to the unique steady-state solution in $ BV(\Omega) $. In particular, this result covers the case where $\alpha$ and $\beta$ are positive constants and is therefore applicable to basic settings arising in image processing.
\end{description}

\section{Preliminaries}
We begin by prescribing the notations used throughout this paper. 
\bigskip

\noindent
\underline{\textbf{\textit{Notations in real analysis.}}}
We define:
\begin{align*}
    & r \vee s := \max \{ r, s \} ~ \mbox{ and } ~ r \wedge s := \min \{r, s\}, \mbox{ for all $ r, s \in [-\infty, \infty] $,}
\end{align*}
and especially, we write:
\begin{align*}
    & [r]^+ := r \vee 0 ~ \mbox{ and } ~ [r]^- := -(r \wedge 0), \mbox{ for all $ r \in [-\infty, \infty] $.}
\end{align*}
Additionally, for any $M > 0$, let $\mathcal T_M : \mathbb R \longrightarrow [ -M, M ]$ be the truncation operator, defined as
\begin{gather*}
  \mathcal T_M : r \in \mathbb R \mapsto \bigl( r \lor ( -M ) \bigr) \land M \in [ -M, M ].
\end{gather*}

Throughout this paper, we frequently use the following elementary fact from real analysis.
\noindent
    \begin{description}
        \item[{\boldmath\textbf{Fact\,($\ast$):}}] Let $a,b \in \mathbb R$ and $\{a_n\}_{n \in \mathbb N},\{b_n\}_{n\in\mathbb N} \subset \mathbb R$ be such that
\begin{gather*}
 \liminf_{n \to \infty} a_n \geq a,\ \liminf_{n\to \infty}b_n \geq b, \text{ and } \limsup_{n \to \infty}(a_n + b_n) \leq a + b.
\end{gather*}
    Then, it holds that $a_n \to a$ and $b_n \to b$ as $n \to \infty$.
\end{description}

Let $ d \in \N $ be a fixed dimension. We denote by $ |y| $ and $ y \cdot z $ the Euclidean norm of $ y \in \mathbb{R}^d $ and the scalar product of $ y, z \in \R^d $, respectively, i.e., 
\begin{equation*}
\begin{array}{c}
| y | := \sqrt{y_1^2 +\cdots +y_d^2} \mbox{ \ and \ } y \cdot z  := y_1 z_1 +\cdots +y_d z_d, 
\\[1ex]
\mbox{ for all $ y = [y_1, \ldots, y_d], ~ z = [z_1, \ldots, z_d] \in \mathbb{R}^d $.}
\end{array}
\end{equation*}
We denote by $\mathcal{L}^{d}$ the $ d $-dimensional Lebesgue measure, and we denote by $ \mathcal{H}^{d} $ the $ d $-dimensional Hausdorff measure.  In particular, the measure theoretical phrases, such as ``a.e.'', ``$dt$'', and ``$dx$'', and so on, are all with respect to the Lebesgue measure in each corresponding dimension. Also on a Lipschitz-surface $ S $, the phrase ``a.e.'' is with respect to the Hausdorff measure in each corresponding Hausdorff dimension. In particular, if $S$ is $C^1$-surface, then we simply denote by $dS$ the area-element of the integration on $S$.

For a Borel set $ E \subset \R^d $, we denote by $ \chi_E : \R^d \longrightarrow \{0, 1\} $ the characteristic function of $ E $. Additionally, for a distribution $ \zeta $ on an open set in $ \R^d $ and any $i \in \{ 1,\dots,d \}$, let $ \partial_i \zeta$ be the distributional differential with respect to $i$-th variable of $\zeta$. As well as we consider, the differential operators, such as $\nabla,\ \diver, \ \nabla^2$, and so on, are considered in distributional senses.
\bigskip

\noindent
\underline{\textbf{\textit{Abstract notations. (cf. \cite[Chapter II]{MR0348562})}}}
For an abstract Banach space $ X $, we denote by $ |\cdot|_{X} $ the norm of $ X $, and denote by $ \langle \cdot, \cdot \rangle_X $ the duality pairing between $ X $ and its dual $ X^* $. In particular, when $ X $ is a Hilbert space, we denote by $ (\cdot,\cdot)_{X} $ the inner product of $ X $.

For Banach spaces $ X_1, \dots, X_d $ with $ 1 < d \in \N $, let $ X_1 \times \dots \times X_d $ be the product Banach space endowed with the norm $ |\cdot|_{X_1 \times \cdots \times X_d} := |\cdot|_{X_1} + \cdots +|\cdot|_{X_d} $. However, when all $ X_1, \dots, X_d $ are Hilbert spaces, $ X_1 \times \dots \times X_d $ denotes the product Hilbert space endowed with the inner product $ (\cdot, \cdot)_{X_1 \times \cdots \times X_d} := (\cdot, \cdot)_{X_1} + \cdots +(\cdot, \cdot)_{X_d} $ and the norm $ |\cdot|_{X_1 \times \cdots \times X_d} := \bigl( |\cdot|_{X_1}^2 + \cdots +|\cdot|_{X_d}^2 \bigr)^{\frac{1}{2}} $. In particular, when all $ X_1, \dots,  X_d $ coincide with a Banach space $ Y $, the product space $X_1 \times \dots \times X_d$ is simply denoted by $[Y]^d$.
\bigskip

\noindent
\underline{\textbf{\textit{Basic notations.}}}
In this paper, we set the time-space cylindrical domain and its lateral boundary as follows:
\begin{gather*}
    Q_T \coloneqq (0,T) \times \Omega \text{ and }\Sigma_T \coloneqq (0,T) \times \Gamma, ~\mbox{for any $ T \in (0, \infty) $},
    \\
    \mbox{and }~~ 
  Q \coloneqq (0,\infty) \times \Omega \text{ and }\Sigma \coloneqq (0,\infty) \times \Gamma,
\end{gather*}
Additionally, as notations of base spaces, we let
\begin{gather*}
  H \coloneqq L^2(\Omega),\ V \coloneqq H^1(\Omega),\ \mathscr H_T \coloneqq L^2(0,T;H), \text{ and } \mathscr H \coloneqq L^2(0,\infty;H). 
\end{gather*}
\medskip

\noindent
\underline{\textbf{\textit{Notations in convex analysis.}}}
Let $X$ be an abstract Hilbert space $X$. For a proper, lower semi-continuous (l.s.c.), and convex function $\Psi : \,X \longrightarrow (-\infty, \infty]$ on a Hilbert space $X$, we denote by $D(\Psi)$ the effective domain of $\Psi$. Also, we denote by $\partial \Psi$ the subdifferential of $\Psi$. The set $D(\partial \Psi) := \left\{ z \in X\,|\, \partial \Psi(z) \neq \emptyset \right\}$ is called the domain of $\partial\Psi$. 
The subdifferential $\partial\Psi$ can be regarded as a generalized notion of derivative for a convex function $\Psi$, and it is known to be a maximal monotone graph in the product space $X\times X$.
We often use the notation ``$[z_0, z_0^*] \in \partial \Psi ~{\rm in}~ X \times X$", to mean that ``$z_0^* \in \partial \Psi(z_0) ~{\rm in}~ X~{\rm for}~ z_0 \in D(\partial \Psi)$", by identifying the operator $\partial \Psi$ with its graph in $X\times X$.
\medskip

\begin{example}\label{ex 1}
    Let $ \{ \gamma_\varepsilon \}_{\varepsilon \in [0, 1)} $ be the sequence of convex functions defined as follows:
  \begin{gather}
      \gamma_\varepsilon : y \in \mathbb R^N \mapsto \gamma_\varepsilon(y) \coloneqq \sqrt{\varepsilon^2 + |y|^2} \in [0,\infty), \mbox{ for $ \varepsilon \in [0, 1) $.}\label{gamma}\noeqref{gamma}
  \end{gather}
Then, the following two items hold.
    \begin{description}
        \item[\textmd{(\,I\,)}] Let $\{ \Phi_\varepsilon \}_{\varepsilon \in [0,1)}$ be a sequence of functionals on $[H]^N$, defined as:
      \begin{equation}
          \Phi_\varepsilon : {\bm w} \in [H]^N \mapsto \Phi_\varepsilon({\bm w}) := \int_\Omega \gamma_\varepsilon({\bm w}) \,dx \in [0,\infty], \mbox{ for $ \varepsilon \in [0, 1) $.}
      \end{equation}
            Then, for every $\varepsilon \in [0,1)$, $\Phi_\varepsilon$ is the proper l.s.c. and convex function, such that
      \begin{equation}
          D(\Phi_\varepsilon) = D(\partial \Phi_\varepsilon) = [H]^N,
      \end{equation}
      and
      \begin{gather}
        \partial \Phi_\varepsilon(\bm{w}) ~ := \left\{ \begin{array}{l}
            \bigl\{ \nabla \gamma_\varepsilon(\bm{w}) \bigr\}, \mbox{ if $ \varepsilon > 0 $,}
            \\[2ex]
            \autom{ \bm{w}^* \in [H]^N }{ \bm{w}^* \in \partial \gamma_0 (\bm{w}) \aein \Omega }, \text{ if } \varepsilon = 0,
        \end{array} \right.
        \\
          \mbox{in $[H]^N$, for any $ \bm{w} \in [H]^N $.}
    \end{gather}
\item[\textmd{\II}] Let $I \subset (0,T)$ be any open interval, and let $\{ \widehat{\Phi}_\varepsilon^I \}_{\varepsilon \in [0,1) }$ be a sequence of functionals on $L^2(I;[H]^N) \,( = \bigl[ L^2(I;H) \bigr]^N)$, defined as:
    \begin{equation}
        \widehat{\Phi}_\varepsilon^I : {\bm w} \in L^2(I;[H]^N) \mapsto \widehat{\Phi}_\varepsilon^I ({\bm w}) := \int_I \Phi_\varepsilon({\bm w}(t))\,dt \in [0,\infty], \mbox{ for $ \varepsilon \in [0, 1) $.}
    \end{equation}
    Then, for every $\varepsilon \in [0,1)$, $\widehat{\Phi}_\varepsilon^I$ is the proper l.s.c. and convex function, such that
    \begin{equation}
      D(\widehat{\Phi}_\varepsilon^I) = D(\partial \widehat{\Phi}_\varepsilon^I) = L^2(I;[H]^N),
    \end{equation}
    and
    \begin{align}
        \partial \widehat{\Phi}_\varepsilon^I({\bm w}) &= \bigl\{ \tilde{\bm w}^* \in L^2(I;[H]^N) \,|\, \tilde{\bm w}^* (t) \in \partial \Phi_\varepsilon^I({\bm w}(t)) \mbox{ in } [H]^N, \ \mbox{a.e. }t \in I \bigr\}
      \\
      &= \left\{ \begin{array}{l}
          \bigl\{ \nabla \gamma_\varepsilon(\bm{w}) \bigr\}, \mbox{ if $ \varepsilon \in (0,1) $,}
        \\[2ex]
        \autom{ \bm{w}^* \in L^2(I;[H]^N) }{ \bm{w}^* \in \partial_0 \gamma_\varepsilon( \bm{w} ) \aein I \times \Omega }, \text{ if } \varepsilon = 0,
    \end{array} \right.
    \\
    &\mbox{in $L^2(I;[H]^N)$, for any $ \bm{w} \in L^2(I;[H]^N) $.}
    \end{align}
    \end{description}
\end{example}
\medskip

\noindent
\underline{\textbf{\textit{Notations for the time-discretization.}}}
Let $\tau > 0$ be a constant of the time step-size, and let $\{ t_i \}_{i=0}^\infty \subset [0,\infty)$ be the time sequence defined as:
\begin{equation}
  t_i := i\tau,\ i=0,1,2,\ldots.
\end{equation}
Let $X$ be a Banach space. Then, for any sequence $\{ [t_i,z_i] \}_{i=0}^\infty \subset[0,\infty) \times X$, we define the \textit{forward time-interpolation} $[\overline{z}]_\tau \in L^\infty_\mathrm{loc}([0,\infty); X)$, the \textit{backward time-interpolation} $[\underline{z}]_\tau \in L^\infty_\mathrm{loc}([0,\infty);X)$ and the \textit{linear time-interpolation} $[z]_\tau \in W^{1,2}_\mathrm{loc}([0,\infty);X)$, by letting:
\begin{equation}
  \left\{\begin{aligned}
      &[\overline{z}]_\tau(t) := \chi_{(-\infty,0]}(t) z_0 + \sum_{i=1}^\infty \chi_{(t_{i-1}, t_i]}(t) z_i, \\
      &[\underline{z}]_\tau(t) := \chi_{(-\infty,0]}(t) z_0 + \sum_{i=0}^\infty \chi_{(t_{i}, t_{i+1}]}(t) z_{i}, \\
    &[z]_\tau (t) := \sum_{i=1}^\infty \chi_{[t_{i-1},t_i)}(t) \left(\frac{t-t_{i-1}}{\tau} z_{i} + \frac{t_i - t}{\tau} z_{i-1}\right),
  \end{aligned}\right. ~{\rm in}~ X,\ {\rm for}~ t \geq 0, \label{eq:time-interpolation}
\end{equation}
respectively.

In the meantime, for any $ q \in [1, \infty) $ and any $ \zeta \in L_\mathrm{loc}^q([0, \infty); X) $, we denote by $ \{ \zeta_i \}_{i = 0}^\infty \subset X $ the sequence of time-discretization data of $ \zeta $, defined as:
\begin{align}\label{tI01}
    & \zeta_0 := 0 \mbox{ in $X$, and }\zeta_i := \frac{1}{\tau} \int_{t_{i -1}}^{t_i} \zeta(\varsigma) \, d \varsigma ~ \mbox{ in $ X $, ~ for $ i = 1, 2, 3, \dots $.}
\end{align}
As is easily checked, the time-interpolations $ [\overline{\zeta}]_\tau, [\underline{\zeta}]_\tau \in L^q_\mathrm{loc}([0, \infty); X) $ for the above $ \{ \zeta_i \}_{i = 0}^\infty $ fulfill that:
\begin{align}\label{tI02}
    & [\overline{\zeta}]_\tau \to \zeta \mbox{ and } [\overline{\zeta}]_\tau \to \zeta \mbox{ in $ L^q_\mathrm{loc}([0, \infty); X) $, as $ \tau \downarrow 0 $.}
\end{align}

For any $w \in C([0, T]; X)$, the sequence $\{ w_i \}_{i = 0}^\infty \subset X$ is given as:
\begin{gather}
  w_i \coloneqq 
  \begin{cases}
    w(t_i), & \text{if } t_i \leq T,\\
    w(t_{i-1}), & \text{if } t_{i-1} \leq T < t_i.\\
    0, & \text{otherwise}.
  \end{cases}\label{interpolation}\noeqref{interpolation}
\end{gather}

\begin{definition}[Mosco-convergence: cf. \cite{MR0298508}]\label{Def.Mosco}
  Let $ X $ be an abstract Hilbert space. Let $ \Psi : X \rightarrow (-\infty, \infty] $ be a proper, l.s.c., and convex function, and let $ \{ \Psi_n \}_{n = 1}^\infty $ be a sequence of proper, l.s.c., and convex functions $ \Psi_n : X \rightarrow (-\infty, \infty] $, $ n = 1, 2, 3, \dots $.  Then, it is said that $ \Psi_n \to \Psi $ on $ X $, in the sense of Mosco, as $ n \to \infty $, iff. the following two conditions are fulfilled:
  \begin{description}
    \item[(\hypertarget{M_lb}{M1}) The condition of lower-bound:]$ \ds \varliminf_{n \to \infty} \Psi_n(\check{w}_n) \geq \Psi(\check{w}) $, if $ \check{w} \in X $, $ \{ \check{w}_n  \}_{n = 1}^\infty \subset X $, and $ \check{w}_n \to \check{w} $ weakly in $ X $, as $ n \to \infty $. 
    \item[(\hypertarget{M_opt}{M2}) The condition of optimality:]for any $ \hat{w} \in D(\Psi) $, there exists a sequence \linebreak $ \{ \hat{w}_n \}_{n = 1}^\infty  \subset X $ such that $ \hat{w}_n \to \hat{w} $ in $ X $ and $ \Psi_n(\hat{w}_n) \to \Psi(\hat{w}) $, as $ n \to \infty $.
  \end{description}
    Additionally, when Mosco-convergence is stated with respect to a continuous parameter $\varepsilon \in [0,1)$, it means that the corresponding Mosco-convergence holds for every sequence $\{\varepsilon_n\}_{n\in\N} \subset [0,1)$ converging to the specified limit of $\varepsilon$.
\end{definition}

\begin{remark}\label{Rem.MG}
  Let $ X $, $ \Psi $, and $ \{ \Psi_n \}_{n = 1}^\infty $ be as in Definition~\ref{Def.Mosco}. Then, the following facts hold.
  \begin{description}
    \item[(\hypertarget{Fact1}{Fact\,1})](cf. \cite[Theorem 3.5.3]{MR0773850} and \cite[Chapter 2]{Kenmochi81}) Let us assume that
    \begin{equation}\label{Mosco01}
      \Psi_n \to \Psi \mbox{ on $ X $, in the sense of  Mosco, as $ n \to \infty $,}
      \vspace{-1ex}
    \end{equation}
and
\begin{equation*}
\left\{ ~ \parbox{10cm}{
$ [w, w^*] \in X \times X $, ~ $ [w_n, w_n^*] \in \partial \Psi_n $ in $ X \times X $, $ n \in \N $,
\\[1ex]
$ w_n \to w $ in $ X $ and $ w_n^* \to w^* $ weakly in $ X $, as $ n \to \infty $.
} \right.
\end{equation*}
Then, it holds that:
\begin{equation*}
[w, w^*] \in \partial \Psi \mbox{ in $ X \times X $, and } \Psi_n(w_n) \to \Psi(w) \mbox{, as $ n \to \infty $.}
\end{equation*}
    \item[(\hypertarget{Fact2}{Fact\,2})](cf. \cite[Lemma 4.1]{MR3661429} and \cite[Appendix]{MR2096945}) Let $ d \in \mathbb{N} $ denote dimension constant, and let $  S \subset \R^d $ be a bounded open set. Then, under the Mosco-convergence as in \eqref{Mosco01}, a sequence $ \{ \widehat{\Psi}_n^S \}_{n = 1}^\infty $ of proper, l.s.c., and convex functions on $ L^2(S; X) $, defined as:
        \begin{equation*}
            w \in L^2(S; X) \mapsto \widehat{\Psi}_n^S(w) := \left\{ \begin{array}{ll}
                    \multicolumn{2}{l}{\ds \int_S \Psi_n(w(t)) \, dt,}
                    \\[1ex]
                    & \mbox{ if $ \Psi_n(w) \in L^1(S) $,}
                    \\[2.5ex]
                    \infty, & \mbox{ otherwise,}
                \end{array} \right. \mbox{for $ n = 1, 2, 3, \dots $;}
        \end{equation*}
        converges to a proper, l.s.c., and convex function $ \widehat{\Psi}^S $ on $ L^2(S; X) $, defined as:
        \begin{equation*}
            z \in L^2(S; X) \mapsto \widehat{\Psi}^S(z) := \left\{ \begin{array}{ll}
                    \multicolumn{2}{l}{\ds \int_S \Psi(z(t)) \, dt, \mbox{ if $ \Psi(z) \in L^1(S) $,}}
                    \\[2ex]
                    \infty, & \mbox{ otherwise;}
                \end{array} \right. 
        \end{equation*}
        on $ L^2(S; X) $, in the sense of Mosco, as $ n \to \infty $. 
\end{description}
\end{remark}

\begin{example}[Examples of Mosco-convergence]\label{Rem.ExMG}
    Let $ \varepsilon_0 \in (0,1) $ be arbitrary fixed constant, and $\{ \gamma_\varepsilon \}_{\varepsilon \in (0,1)}$ be as in \eqref{gamma}. Then, the following three items hold.
    \begin{description}
        \item[\textmd{(O)}] $\ds{\gamma_\varepsilon \to \gamma_{\varepsilon_0} \mbox{ on $ \R^N $, in the sense of Mosco, as $ \varepsilon \to \varepsilon_0 $.}}$
        \item[\textmd{(\,I\,)}] Let $\{ \Phi_\varepsilon \}_{\varepsilon \in (0,1)}$ be the sequence of proper l.s.c. and convex functions on $[H]^N$, as in Example \ref{ex 1} (I). Then, 
      \begin{equation}
        \Phi_\varepsilon \to \Phi_{\varepsilon_0} \mbox{ on $ [H]^N $, in the sense of Mosco, as $ \varepsilon \to \varepsilon_0 $.}
      \end{equation}
  \item[\textmd{\II}] Let $I \subset (0,T)$ be an open interval, and let $\{ \widehat{\Phi}_\varepsilon^I \}_{\varepsilon \in (0,1)}$ be the sequence of proper l.s.c. and convex functions on $L^2(I;[H]^N)$, as a Example \ref{ex 1} (II). Then, 
      \begin{equation}
        \widehat{\Phi}_\varepsilon^I \to \widehat{\Phi}_{\varepsilon_0}^I \mbox{ on $ L^2(I;[H]^N) $, in the sense of Mosco, as $ \varepsilon \to \varepsilon_0 $.}
      \end{equation}
    \end{description}
\end{example}

\setcounter{main theorem}{0}
\section{Main Results}

In this section, the main assertions are discussed under the following assumptions:
\begin{description}
  \item[{\boldmath(A1)}] $\alpha$ belongs to $L^\infty(Q)$ with a positive lower bound $\delta_\alpha > 0$. Moreover, the time derivative $\partial_t \alpha$ belongs to $ L^2(0,\infty;L^{q^*}(\Omega)) $ with
  \begin{gather*}
    q^* \coloneqq \frac{ 2 p^* }{ p^* - 2 } \text{\ \ and\ \ } p^* \coloneqq
    \begin{cases}
      4, \If N \leq 2,\\[1ex]
      \ds\frac{ 2N }{ N - 2 }, \otherwise.
    \end{cases}
  \end{gather*}
  \item[{\boldmath(A2)}] $\beta$ belongs to $W^{1,\infty}(Q)$ and, in this time-local setting, satisfies $\delta_T \coloneqq \inf \beta(Q_T) > 0$ for any $T \in (0,\infty)$. Additionally, $\partial_t \beta \leq 0$ a.e. in $Q$, i.e. $\beta$ is nonincreasing in time.
  \item[{\boldmath(A3)}] $g : \mathbb R \longrightarrow \mathbb R$ is a locally Lipschitz continuous function with a nonnegative primitive $G : \mathbb R \longrightarrow \mathbb R$, i.e. $g$ coincides with the derivative of $ G $.
  \item[{\boldmath(A4)}] The external force $ f \in L^\infty(Q) $ is given, and the initial data $u_0$ belongs to $V \cap L^\infty(\Omega)$.
  \item[{\boldmath(A5)}] There exists a constant $M > 0$ which is so large to satisfy that
      \begin{gather*}
        \norm{u_0}_{L^\infty(\Omega)} \leq M,\ g(M) - \norm{f}_{L^\infty(Q)} \geq 0, \text{ and } g(-M) + \norm{f}_{L^\infty(Q)} \leq 0.
      \end{gather*}
\end{description}

Next, we define the solutions to (P)$_\varepsilon$ and (S)$_\varepsilon$, respectively.
\begin{definition}[Solution to (P)$_\varepsilon$]\label{def 2}
    For any $\varepsilon \in [0,1)$, a function $u^\varepsilon : [0,\infty) \longrightarrow H$ is called a solution to (P)$_\varepsilon$, iff. the following conditions are satisfied:
\begin{description}
  \item[{\boldmath(S0)$_\varepsilon$}] $u^\varepsilon \in W^{1,2}_\loc([0,\infty);V)$, and $u^\varepsilon(0) = u_0$ in $H$.
 \item[{\boldmath(S1)$_\varepsilon$}] There exists a function $\bm{\omega}^*_\varepsilon \in L^\infty_\loc([0,\infty);[H]^N)$ such that
    \begin{gather*}
      \bm{\omega}^*_\varepsilon \in \partial \gamma_\varepsilon (\nabla u^\varepsilon), \text{ for}\aein Q,
    \end{gather*}
  and $u^\varepsilon$ solves the following variational identity:
    \begin{gather*}
        \bigl( \alpha(t) \partial_t u^\varepsilon(t) + g( u^\varepsilon(t) ), \varphi \bigr)_H + \bigl( \bm{\omega}^*_\varepsilon(t) + \beta(t) \nabla \partial_t u^\varepsilon(t), \nabla \varphi \bigr)_{[H]^N}\\[1ex]
        = \bigl( f(t), \varphi \bigr)_H, \text{ for any } \varphi \in V \text{ and} \ae t \in (0,\infty).
    \end{gather*}
\end{description}
\end{definition}
\begin{definition}[Solution to (S)$_\varepsilon$]\label{def 3}
  For any $\varepsilon \in [0,1)$, a function $u^\varepsilon_\infty \in BV(\Omega) \cap H$ is called a solution to (S)$_\varepsilon$, iff. the following conditions are satisfied:
\begin{description}
 \item[{\boldmath(S2)$_\varepsilon$}]
    $\begin{aligned}
      \int_\Omega \gamma_\varepsilon(D u^\varepsilon_\infty) - \int_\Omega \gamma_\varepsilon(D \varphi) \leq \bigl( f_\infty - g(u^\varepsilon_\infty), u^\varepsilon_\infty - \varphi \bigr)_H, \text{ for any } \varphi \in BV(\Omega) \cap H.
    \end{aligned}$
\end{description}
    Let $S_\infty^\varepsilon$ be the set of solutions to (S)$_\varepsilon$. Note that $S_\infty^\varepsilon$ may be empty if (S)$_\varepsilon$ has no solution, and may contain more than one element if its solution is not unique.
\end{definition}
\begin{definition}[$\omega$-limit set]\label{def 4}
    For any $\varepsilon \in [0,1)$ and any global-in-time solution $u^\varepsilon$ to (P)$_\varepsilon$, we define its $\omega$-limit set $\omega(u^\varepsilon)$ as the set of all accumulation points of the trajectory $\{u^\varepsilon(t)\}_{t \geq 0}$ as $t \to \infty$ by setting
\begin{gather*}
    \omega(u^\varepsilon) \coloneqq \autom{ u^\varepsilon_\infty \in BV(\Omega) \cap H }{
    \parbox{8cm}{
      \centering
      $u^\varepsilon(t_m) \to u^\varepsilon_\infty \In H \as m \to \infty$,
      for some $ \{t_m\}_{m \in \mathbb N} \subset (0,\infty) $ satisfying $ t_m \uparrow \infty$
    }
    }.
\end{gather*}
\end{definition}

We now state four main theorems concerning solutions to (P)$_\varepsilon$ and their large-time behavior. The first theorem establishes the existence and uniqueness of global-in-time solutions.
\begin{main theorem}[Existence and uniqueness of solution]
Under the assumptions (A1)--(A5), for any $\varepsilon \in [0,1)$, the problem (P)$_\varepsilon$ admits a unique solution. 
\end{main theorem}

Next, we state three main theorems on the large-time behavior of solutions to (P)$_\varepsilon$.
\begin{main theorem}[Large-time behavior in the basic form]
  Let us assume (A1)--(A5), and let us additionally assume:
\begin{description}
    \item[{\boldmath(A6)}] There exists a time-independent function $f_\infty \in H$ such that $f - f_\infty \in \mathscr H$.
\end{description}
  Then, for any $\varepsilon \in [0,1)$, it holds that
\begin{gather*}
    \omega(u^\varepsilon) \cap S^\varepsilon_\infty \neq \emptyset.
\end{gather*}
    In particular, the steady-state problem (S)$_\varepsilon$ admits at least one solution, and at least one $\omega$-limit point of $u^\varepsilon$ is a solution to (S)$_\varepsilon$.
\end{main theorem}

The assertion of Main Theorem 2 can be strengthened by imposing one additional condition.

\begin{main theorem}[Large-time behavior under exponential decay of $\beta$]
  In addition to (A1)--(A6), let us assume:
  \begin{description}
    \item[{\boldmath(A2)$^\prime$}] There exists a constant $C > 0$ such that
    $ \partial_t \left( e^{Ct} \beta \right) \leq 0 \aein Q $.
    In particular, $\beta$ decays exponentially in time, uniformly in $\Omega$.
  \end{description}
  Then, for any $\varepsilon \in [0,1)$, it holds that
  \begin{gather*}
    \omega(u^\varepsilon) \subset S^\varepsilon_\infty,
  \end{gather*}
  i.e., every $\omega$-limit point of $u^\varepsilon$ is a solution to the steady-state problem (S)$_\varepsilon$.
\end{main theorem}

In Main Theorem 3, we imposed the strong condition that $\beta$ decays exponentially in time. However, this condition on $\beta$ can be weakened by imposing additional conditions ensuring uniqueness for the steady-state problem (S)$_\varepsilon$. This leads to the following final result.

\begin{main theorem}[Large-time behavior when $G$ is strictly convex]
  In addition to (A1)--(A6), let us assume:
  \begin{description}
    \item[{\boldmath(A7)}]
    \begin{description}[leftmargin=3em]
      \item[{\boldmath(a-1)}] There exists a constant $C_\alpha > 0$ such that
      $\partial_t \bigl( (t+1)^{C_\alpha} \alpha \bigr) \geq 0$ a.e. in $Q$;
      
  \item[{\boldmath~(a-2)}] There exists a constant $C_\beta > 0$ such that
      $\partial_t \bigl( (t+1)^{C_\beta} \beta \bigr) \geq 0$ a.e. in $Q$;

      \item[{\boldmath~(a-3)}] For the derivative $g'$ of $g$, there exists a constant
      $C_{g^\prime} > 0$ such that $g^\prime \geq C_{g^\prime}$ a.e. in $\mathbb R$.
      In particular, $g$ is strictly increasing and its potential $G$ is strictly convex on $ \mathbb R $;

      \item[{\boldmath~(a-4)}] $f \in W^{1,2}_\loc([0,\infty);H)$, and
      $\sqrt{t+1}\partial_t f \in \mathscr H$.
    \end{description}
  \end{description}
  Then, for any $\varepsilon \in [0,1)$, there exists a function
  $u_\infty^\varepsilon \in BV(\Omega) \cap H$ such that
  \begin{gather}
    S_\infty^\varepsilon = \omega(u^\varepsilon)
    = \{u_\infty^\varepsilon\}.
  \end{gather}
  In other words, the steady-state problem (S)$_\varepsilon$ admits the unique solution
  $u_\infty^\varepsilon$, and
  $u^\varepsilon(t) \to u_\infty^\varepsilon$ in $H$ as $t \to \infty$.
\end{main theorem}

\section{Proof of Main Theorem 1}

The global-in-time solutions to (P)$_\varepsilon$ are constructed by the time-discretization method. To this end, we fix a time-step size $\tau \coloneqq T/n$ with $n \in \mathbb N$ and a relaxation parameter $\varepsilon \in (0,1)$. We then introduce the following time-discretization scheme (AP)$_\varepsilon^\tau$ as an approximation of (P)$_\varepsilon$.
\bigskip
\begin{description}
    \item[\textmd{(AP)$_\varepsilon^\tau$:}] To find a sequence $\{ u^\varepsilon_i \}_{i = 1}^n \subset V $ satisfying
  \begin{gather*}
    \begin{cases}
      \ds \frac{\alpha_i}{\tau} ( u^\varepsilon_i - u^\varepsilon_{i-1} ) - \diver \left( \nabla \gamma_\varepsilon ( \nabla u^\varepsilon_i ) + \frac{\beta_i}{\tau} \nabla ( u^\varepsilon_i - u^\varepsilon_{i-1} ) \right) + g( \mathcal T_M u^\varepsilon_i ) = f_i \In H,\\[2ex]
      \ds \nabla u^\varepsilon_i \cdot n_\Gamma = 0, \text{ for } \aeon \Gamma,\ \ \ \ u^\varepsilon_0 = u_0 \In H,\\
    \end{cases}\\
    \text{for } i=1,2,\dots,n,
  \end{gather*}
\end{description}
where $f_i$, $\alpha_i$, and $\beta_i$ are determined according to the discretization procedures given in \eqref{tI01} for $f_i$ and in \eqref{interpolation} for $\alpha_i$ and $\beta_i$.
For the variational analysis of the time-discretization scheme (AP)$_\varepsilon^\tau$, we introduce the following approximating free-energy functional $\widetilde{\mathcal F}_\varepsilon : H \longrightarrow (-\infty, \infty]$:
\begin{gather*}
  \widetilde{\mathcal F}_\varepsilon : H \ni u \mapsto \widetilde{\mathcal F}_\varepsilon(u) \coloneqq 
  \begin{cases}
    \ds \int_\Omega \gamma_\varepsilon ( D u ) + \int_\Omega \widetilde{G}_M(u)\,dx, \If u \in BV(\Omega),\\[2ex]
    \ds +\infty, \otherwise,
  \end{cases}
\end{gather*}
where $\widetilde{G}_M \in C^{1,1}(\mathbb R)$ is a nonnegative primitive of $g \circ \mathcal T_M \in W^{1,\infty}(\mathbb R)$, with $\widetilde{G}_M(0) = G(0)$.
\medskip

The solution to the scheme (AP)$^\tau_\varepsilon$ is defined as follows.

\begin{definition}\label{def 5}
    For any $\varepsilon \in (0,1)$, a sequence of functions $ \{ u^\varepsilon_i \}_{i = 1}^n \subset V $ is called a solution to the time-discretization scheme (AP)$^\varepsilon_\tau$ iff.: 
    \begin{gather*}
        \left(
            \frac{\alpha_i}{\tau}(u^\varepsilon_i-u^\varepsilon_{i-1})
            +g(\mathcal T_Mu^\varepsilon_i),\varphi
        \right)_H
        +
        \left(
            \nabla\gamma_\varepsilon(\nabla u^\varepsilon_i)
            +\frac{\beta_i}{\tau}\nabla(u^\varepsilon_i-u^\varepsilon_{i-1}),
            \nabla\varphi
        \right)_{[H]^N}
        =
        (f_i,\varphi)_H,
        \\
        \text{for any $\varphi\in V$ and $i=1,2,\dots, n$.}
    \end{gather*}
\end{definition}

To prove Main Theorem 1, we first establish several lemmas. 
We begin with the existence and uniqueness of a solution to the time-discretization scheme (AP)$^\varepsilon_\tau$.
\begin{lemma}\label{lem 3-1}
    Let $\varepsilon \in (0,1)$. Let $\overline{u}_0 \in V$ and $\overline{f} \in H$.
  Let $\delta_{\overline{\alpha}} > 0$ and $\delta_{\overline{\beta}} > 0$, and let
  $\overline{\alpha} \in L^\infty(\Omega)$ and
  $\overline{\beta} \in W^{1,\infty}(\Omega)$ be fixed functions such that
  $\overline{\alpha} \geq \delta_{\overline{\alpha}}$ and
  $\overline{\beta} \geq \delta_{\overline{\beta}}$ a.e. in $\Omega$, respectively.
  Then, there exists a constant $\overline{\tau} \in (0,1)$, depending only on $\norm{g^\prime}_{L^\infty(-M,M)}$ and $\delta_{\overline{\alpha}}$, such that, for any $\tau \in (0,\overline{\tau})$, the nonlinear elliptic problem
  \begin{flalign}
    \begin{cases}
      \ds \frac{\overline{\alpha}}{\tau} ( u - \overline{u}_0 ) - \diver \left( \nabla \gamma_\varepsilon ( \nabla u ) + \frac{\overline{\beta}}{\tau} \nabla ( u - \overline{u}_0 ) \right) + g( \mathcal T_M u ) = \overline{f} \In H,\\[2ex]
      \ds \nabla u \cdot n_\Gamma = 0, \text{ for } \aeon \Gamma,
    \end{cases}\label{3.1.1}\noeqref{3.1.1}
  \end{flalign}
admits a unique solution   $u \in V$ in the following sense:
  \begin{gather*}
    \left( \frac{\overline{\alpha}}{\tau} ( u - \overline{u}_0 ) + g( \mathcal T_M u ), \varphi \right)_H + \left( \nabla \gamma_\varepsilon( \nabla u ) + \frac{\overline{\beta}}{\tau} \nabla ( u - \overline{u}_0 ), \nabla \varphi \right)_{[H]^N} = \bigl( \overline{f}, \varphi \bigr)_H,\\
    \text{for any } \varphi \in V.
  \end{gather*}
\end{lemma}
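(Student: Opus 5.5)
The plan is to write the elliptic problem \eqref{3.1.1} as the Euler--Lagrange equation of a strictly convex, coercive functional on $V$ and use the direct method for existence. Set
\begin{gather*}
  J(u) := \int_\Omega \gamma_\varepsilon(\nabla u)\,dx + \frac{1}{2\tau}\int_\Omega \overline{\alpha}\,|u-\overline{u}_0|^2\,dx + \frac{1}{2\tau}\int_\Omega \overline{\beta}\,|\nabla(u-\overline{u}_0)|^2\,dx + \int_\Omega \widetilde{G}_M(u)\,dx - (\overline{f},u)_H,
\end{gather*}
for $u \in V$. Since $\varepsilon>0$, $\gamma_\varepsilon$ is $C^1$ with $|\nabla\gamma_\varepsilon|\le 1$, so $u\mapsto\int_\Omega\gamma_\varepsilon(\nabla u)\,dx$ is convex and G\^ateaux differentiable on $V$ with derivative $(\nabla\gamma_\varepsilon(\nabla u),\nabla\varphi)_{[H]^N}$. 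The two quadratic terms are differentiable with the expected derivatives, because $\overline{\alpha},\overline{\beta}\in L^\infty(\Omega)$. The function $\widetilde{G}_M$ has derivative $g\circ\mathcal T_M$, which is bounded and Lipschitz with constant $L_M:=|g'|_{L^\infty(-M,M)}$. Hence $u\mapsto\int_\Omega\widetilde{G}_M(u)\,dx$ is $C^1$ on $H$. Critical points of $J$ on $V$ are therefore exactly the weak solutions stated in the lemma.

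The key step is convexity of $J$, since $\widetilde{G}_M$ need not be convex. Its second derivative satisfies $\widetilde{G}_M''\ge -L_M$ a.e., so $r\mapsto \widetilde{G}_M(r)+\frac{L_M}{2}r^2$ is convex. The quadratic term contributes the strong convexity modulus $\delta_{\overline{\alpha}}/\tau$ in the $H$-norm. Choose $\overline{\tau}\in(0,1)$ with $\overline{\tau}<\delta_{\overline{\alpha}}/(2L_M)$, say $\overline{\tau}:=\min\{1/2,\ \delta_{\overline{\alpha}}/(2L_M+1)\}$. Then for $\tau<\overline{\tau}$ the map
\[
u\mapsto \frac{1}{2\tau}\int_\Omega\overline{\alpha}|u-\overline{u}_0|^2\,dx+\int_\Omega\widetilde{G}_M(u)\,dx
\]
is strongly convex with modulus at least $\delta_{\overline{\alpha}}/(2\tau)$. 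Together with the gradient term, which is strongly convex with modulus $\delta_{\overline{\beta}}/\tau$ in $|\nabla\cdot|_{[H]^N}$, the functional $J$ is strictly (indeed strongly) convex on $V$. This choice of $\overline{\tau}$ depends only on $L_M$ and $\delta_{\overline{\alpha}}$, as the lemma requires.

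Coercivity follows from strong convexity, or directly. We have $\widetilde{G}_M\ge0$ and $\gamma_\varepsilon\ge0$, and Young's inequality gives $|(\overline{f},u)_H|\le \frac{\delta_{\overline{\alpha}}}{8\tau}|u|_H^2+C$. Hence $J(u)\ge c\,|u|_V^2-C'$ with $c>0$. The functional $J$ is also continuous and convex on $V$, hence weakly l.s.c. A minimizing sequence is therefore bounded in $V$, and a weak limit minimizes $J$. The minimizer satisfies the variational identity, obtained by differentiating $J(u+s\varphi)$ at $s=0$ via dominated convergence.

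For uniqueness, take two solutions $u,v$, subtract their identities and test with $\varphi=u-v$. The term with $\nabla\gamma_\varepsilon$ is nonnegative by monotonicity, and the $\overline{\beta}$-term gives $\frac{\delta_{\overline{\beta}}}{\tau}|\nabla(u-v)|^2_{[H]^N}$. The remaining terms are bounded below by $\bigl(\frac{\delta_{\overline{\alpha}}}{\tau}-L_M\bigr)|u-v|_H^2$, since $g\circ\mathcal T_M$ is Lipschitz with constant $L_M$. The sum must vanish, which forces $u=v$ once $\tau<\delta_{\overline{\alpha}}/L_M$. The only delicate point is this handling of the nonmonotone perturbation $g\circ\mathcal T_M$ through the smallness of $\tau$. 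Everything else is standard.
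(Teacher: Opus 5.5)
Your proposal is correct and follows essentially the same route as the paper. Both arguments minimize the same functional (the paper's $\Upsilon_\varepsilon$, posed on $H$ with value $+\infty$ off $V$), absorb the semi-convexity of $\widetilde{G}_M$ into the $\overline{\alpha}$-quadratic term for small $\tau$, and identify the Euler--Lagrange equation with the stated weak formulation. Two of your additions are small improvements: your direct uniqueness argument (testing the difference of two solutions with $u-v$) proves uniqueness among all weak solutions, not only minimizers. Your choice $\overline{\tau}=\min\{1/2,\ \delta_{\overline{\alpha}}/(2L_M+1)\}$ guarantees $\overline{\tau}\in(0,1)$ and stays well defined when $L_M=0$, which the paper's $\overline{\tau}=\delta_{\overline{\alpha}}/\norm{g^\prime}_{L^\infty(-M,M)}$ does not.
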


\begin{proof}
  For any $\varepsilon \in (0,1)$, we define a functional
  $\Upsilon_\varepsilon : H \longrightarrow (-\infty, \infty]$ by
  \begin{gather*}
    \Upsilon_\varepsilon:H\ni z \mapsto \Upsilon_\varepsilon(z) \coloneqq
    \begin{cases}
      \ds \frac{1}{2\tau} \int_\Omega \overline{\alpha} | z - \overline{u}_0 |^2\,dx
      + \frac{1}{2\tau} \int_\Omega \overline{\beta} | \nabla (z - \overline{u}_0) |^2\,dx\\[2ex]
      \ds \hspace{2em} + \int_\Omega \gamma_\varepsilon(\nabla z)\,dx
      + \int_\Omega \widetilde{G}_M (z)\,dx
      - \int_\Omega \overline{f} z\,dx, \If z \in V,\\[2ex]
      +\infty, \otherwise.
    \end{cases}
  \end{gather*}
  It is easily seen that $\Upsilon_\varepsilon$ is proper, l.s.c., and coercive on $H$.
  Since $\widetilde{G}_M$ is not necessarily convex but is semi-convex
  ($\lambda$-convex), $\Upsilon_\varepsilon$ is not strictly convex in general.
  However, the semi-convexity of $\widetilde{G}_M$, together with the
    lower bound $ \delta_{\overline{\alpha}} > 0 $ of $\overline{\alpha}$, ensures that
  $\widetilde{G}_M(z) + ( \overline{\alpha}|z - \overline{u}_0|^2 )/2\tau$
  is strictly convex whenever $\tau \in (0,\overline{\tau})$, where
  $\overline{\tau}$ is chosen as
  \begin{gather}
    0 < \overline{\tau} \coloneqq
    \frac{\delta_{\overline{\alpha}}}
    {\norm{g^\prime}_{L^\infty(-M, M)}}.
  \end{gather}

  Thus, for any $\tau \in (0,\overline{\tau})$, $\Upsilon_\varepsilon$
  admits a unique minimizer. The Euler--Lagrange equation for this minimization
  problem coincides with the variational formulation of \eqref{3.1.1}.
  Therefore, the minimizer is the unique solution to \eqref{3.1.1}, which
  completes the proof.
\end{proof}

\begin{remark}[Solving method for (AP)$^\tau_\varepsilon$]\label{rem 2}
    Note that the previous lemma enables us to solve the time-discretization scheme (AP)$^\tau_\varepsilon$ inductively. Indeed, we take a constant $\tau^\circ \in (0,1)$ sufficiently small so that
    \begin{gather}
        0 < \tau^\circ < \frac{\delta_\alpha}{\norm{g^\prime}_{L^\infty(-M,M)}}.
        \label{3.1.2}\noeqref{3.1.2}
    \end{gather}
    Then, Lemma \ref{lem 3-1} can be applied inductively to determine the $i$-th time-discrete solution $u^\varepsilon_i \in V$ by setting
    \begin{gather*}
        \overline{u}_0 \coloneqq u^\varepsilon_{i-1},\quad
        \overline{f} \coloneqq f_i,\quad
        \overline{\alpha} \coloneqq \alpha_i,\quad
        \overline{\beta} \coloneqq \beta_i,
        \\
        \text{with the uniform lower bound $\delta_{\overline{\alpha}}=\delta_\alpha$
        for $i=1,2,\dots,n$.}
    \end{gather*}
\end{remark}

In the next lemma, we derive some estimates for $[\overline{u^\varepsilon}]_\tau, [\underline{u^\varepsilon}]_\tau$, and $\partial_t[{u^\varepsilon}]_\tau$. To this end, using \eqref{tI02}, we take $\widetilde{\tau} \in (0,\tau^\circ)$ sufficiently small so that
\begin{gather*}
  \tau\sum_{i=1}^n \norm{f_i}_H^2
  = \norm{[\overline{f}]_\tau}_{\mathscr H_T}^2
  \leq \norm{f}_{\mathscr H_T}^2 + 1,
  \text{ for any } \tau \in (0,\widetilde{\tau}).
\end{gather*}

\begin{lemma}\label{lem 3-2}
  Let $\varepsilon \in (0,1)$. Then, there exist constants
  $\tau_* \in (0,1)$, $C_1 > 0$, and $C_2 > 0$, independent of
  $\varepsilon$, $\tau$, and the time index $i$, such that
\begin{gather}
  \frac{1}{\tau} \sum_{i = 1}^n \norm{
        u^\varepsilon_i - u^\varepsilon_{i-1} 
    }_V^2
  \leq C_1 \bigl(
    \widetilde{\mathcal F}_\varepsilon(u_0)
    + \norm{f}_{\mathscr H_T}^2 + 1
  \bigr),
  \text{ for any } \tau \in (0, \tau_*),
  \notag
  \\[1ex]
  \norm{
      u^\varepsilon_i
  }_V^2 
  \leq C_2 \bigl(
    \widetilde{\mathcal F}_\varepsilon(u_0)
    + \norm{u_0}_V^2
    + \norm{f}_{\mathscr H_T}^2 +1
  \bigr), \text{ for } i = 1,2,\dots,n.
  \label{3.1.3}\noeqref{3.1.3}
\end{gather}
\end{lemma}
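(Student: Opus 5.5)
Test the variational identity of Definition~\ref{def 5} with $\varphi := u^\varepsilon_i - u^\varepsilon_{i-1} \in V$ and sum over $i$. This is the standard discrete energy inequality.

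\textbf{Step 1: the single-step inequality.} Since $\alpha_i \geq \delta_\alpha$ a.e. and $\beta_i \geq \delta_T$ a.e. (by (A1), (A2) and \eqref{interpolation}), the kinetic terms dominate $\frac{1}{\tau}\bigl(\delta_\alpha |u^\varepsilon_i-u^\varepsilon_{i-1}|_H^2 + \delta_T |\nabla(u^\varepsilon_i-u^\varepsilon_{i-1})|_{[H]^N}^2\bigr)$. By convexity of $\gamma_\varepsilon$,
\[
(\nabla\gamma_\varepsilon(\nabla u^\varepsilon_i), \nabla(u^\varepsilon_i - u^\varepsilon_{i-1}))_{[H]^N} \geq \int_\Omega \gamma_\varepsilon(\nabla u^\varepsilon_i)\,dx - \int_\Omega \gamma_\varepsilon(\nabla u^\varepsilon_{i-1})\,dx .
\]
For the perturbation, $\widetilde G_M$ has Lipschitz derivative $g\circ\mathcal T_M$ with constant $L_M := |g'|_{L^\infty(-M,M)}$. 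Hence
\[
(g(\mathcal T_M u^\varepsilon_i), u^\varepsilon_i-u^\varepsilon_{i-1})_H \geq \int_\Omega \widetilde G_M(u^\varepsilon_i)\,dx - \int_\Omega \widetilde G_M(u^\varepsilon_{i-1})\,dx - \frac{L_M}{2}|u^\varepsilon_i-u^\varepsilon_{i-1}|_H^2 .
\]
The forcing term is handled by Young's inequality: $(f_i, u^\varepsilon_i-u^\varepsilon_{i-1})_H \leq \frac{\tau}{\delta_\alpha}|f_i|_H^2 + \frac{\delta_\alpha}{4\tau}|u^\varepsilon_i - u^\varepsilon_{i-1}|_H^2$.

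\textbf{Step 2: absorb the error terms.} This is the main point to check. Choose $\tau_* \leq \min\{\widetilde\tau, \delta_\alpha/(2L_M)\}$, so that $\frac{L_M}{2} \leq \frac{\delta_\alpha}{4\tau}$. Then both error terms are absorbed into half of the $\delta_\alpha$-term, leaving
\[
\frac{\delta_\alpha}{2\tau}|u^\varepsilon_i-u^\varepsilon_{i-1}|_H^2 + \frac{\delta_T}{\tau}|\nabla(u^\varepsilon_i-u^\varepsilon_{i-1})|^2_{[H]^N} + \widetilde{\mathcal F}_\varepsilon(u^\varepsilon_i) \leq \widetilde{\mathcal F}_\varepsilon(u^\varepsilon_{i-1}) + \frac{\tau}{\delta_\alpha}|f_i|_H^2 .
\]
Here I used that $\int_\Omega\gamma_\varepsilon(Du)=\int_\Omega\gamma_\varepsilon(\nabla u)\,dx$ for $u\in V$. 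With this choice $\tau_*$ depends only on $\delta_\alpha$, $L_M$ and $\widetilde\tau$, and not on $\varepsilon$ or $i$. The other constants depend on $\delta_\alpha$ and $\delta_T$ only; implicitly $T$ is fixed.

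\textbf{Step 3: sum and conclude.} Summing over $i=1,\dots,k$ telescopes the energies. Using $\widetilde{\mathcal F}_\varepsilon \geq 0$ (since $\widetilde G_M\ge0$ and $\gamma_\varepsilon\ge0$) and $\tau\sum_i|f_i|_H^2 \leq |f|_{\mathscr H_T}^2+1$, taking $k=n$ gives the first estimate with $C_1 := \max\{2/\delta_\alpha, 1/\delta_T\}\cdot\max\{1,1/\delta_\alpha\}$.

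For the second estimate, write $u^\varepsilon_i = u_0 + \sum_{j\le i}(u^\varepsilon_j-u^\varepsilon_{j-1})$. By Cauchy--Schwarz,
\[
|u^\varepsilon_i|_V^2 \leq 2|u_0|_V^2 + 2\,(i\tau)\,\frac{1}{\tau}\sum_{j}|u^\varepsilon_j-u^\varepsilon_{j-1}|_V^2 \leq 2|u_0|_V^2 + 2T C_1(\cdots).
\]
This yields \eqref{3.1.3} with $C_2 := 2\max\{1, TC_1\}$.
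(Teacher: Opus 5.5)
Your proposal is correct and follows essentially the same route as the paper. Both test with $u^\varepsilon_i-u^\varepsilon_{i-1}$, use the convexity of $\gamma_\varepsilon$ and the semi-convexity of $\widetilde G_M$, absorb the error terms into the $\alpha_i$-term for small $\tau$, telescope using $\widetilde{\mathcal F}_\varepsilon\ge 0$, and then bound $|u^\varepsilon_i|_V$ by Cauchy--Schwarz. The only differences are bookkeeping: your sharper constant $\frac{L_M}{2}$ in place of the paper's $\frac32 |g'|_{L^\infty(-M,M)}$, and $\tau_*\le \delta_\alpha/(2L_M)$ in place of $\tau^\circ/6$.
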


\begin{proof}
    First, in view of \eqref{3.1.2}, we set
\begin{gather}
  0 < \tau_* \coloneqq \min \left\{ \frac{1}{6}\tau^\circ, \widetilde{\tau} \right\}.
  \label{3.1.4}\noeqref{3.1.4}
\end{gather}
Testing (AP)$_\varepsilon^\tau$ by $u^\varepsilon_i-u^\varepsilon_{i-1}$, we obtain the following discrete energy inequality:
\begin{gather}
    \frac{1}{4\tau} \norm{ \sqrt{\alpha_i} (u^\varepsilon_i - u^\varepsilon_{i-1}) }_H^2
    + \frac{1}{4\tau} \norm{ \sqrt{\beta_i} \nabla (u^\varepsilon_i - u^\varepsilon_{i-1}) }_{[H]^N}^2
    + \widetilde{\mathcal F}_\varepsilon(u^\varepsilon_i)\notag\\[-1ex]
    \label{3.1.5}\\[-1ex]
    \leq \widetilde{\mathcal F}_\varepsilon(u^\varepsilon_{i-1})
    + \frac{\tau}{2\delta_\alpha}\norm{f_i}_H^2,
    \text{ for } i=1,2,\dots,n.\notag
\end{gather}
Indeed, the terms in the above variational inequality are estimated as follows:
\begin{gather*}
  \frac{1}{\tau} \bigl( \alpha_i ( u^\varepsilon_i - u^\varepsilon_{i-1} ),
  u^\varepsilon_i - u^\varepsilon_{i-1} \bigr)_H
  = \frac{1}{\tau}
  \norm{\sqrt{\alpha_i} ( u^\varepsilon_i - u^\varepsilon_{i-1} ) }_H^2,
\end{gather*}
\begin{gather*}
  \frac{1}{\tau}\bigl( \beta_i \nabla ( u^\varepsilon_i - u^\varepsilon_{i-1} ),
  \nabla (u^\varepsilon_i - u^\varepsilon_{i-1} ) \bigr)_{[H]^N}
  \geq \frac{1}{4\tau}
  \norm{\sqrt{\beta_i} \nabla ( u^\varepsilon_i - u^\varepsilon_{i-1} ) }_{[H]^N}^2,
\end{gather*}
\begin{gather*}
  \bigl( \nabla \gamma_\varepsilon(\nabla u^\varepsilon_i),
  \nabla ( u^\varepsilon_i - u^\varepsilon_{i-1} ) \bigr)_{[H]^N}
  \geq \int_\Omega \gamma_\varepsilon (\nabla u^\varepsilon_i)\,dx
  - \int_\Omega \gamma_\varepsilon (\nabla u^\varepsilon_{i-1})\,dx,
\end{gather*}
\begin{align*}
  \bigl( g(\mathcal T_M u^\varepsilon_i), u^\varepsilon_i - u^\varepsilon_{i-1} \bigr)_H
  &\geq \int_\Omega \widetilde{G}_M(u^\varepsilon_i)\,dx
  - \int_\Omega \widetilde{G}_M(u^\varepsilon_{i-1})\,dx
  - \frac{3}{2} \norm{g^\prime}_{L^\infty(-M,M)}
  \norm{u^\varepsilon_i - u^\varepsilon_{i-1}}_H^2\\
  &\geq \int_\Omega \widetilde{G}_M(u^\varepsilon_i)\,dx
  - \int_\Omega \widetilde{G}_M(u^\varepsilon_{i-1})\,dx
  - \frac{1}{4\tau}
  \norm{\sqrt{\alpha_i} ( u^\varepsilon_i - u^\varepsilon_{i-1} ) }_H^2,
\end{align*}
and
\begin{gather*}
  \bigl( f_i, u^\varepsilon_i - u^\varepsilon_{i-1} \bigr)_H
  \leq \frac{\tau}{2\delta_\alpha} \norm{f_i}_H^2
  + \frac{1}{2\tau}
  \norm{ \sqrt{\alpha_i} ( u^\varepsilon_i - u^\varepsilon_{i-1} ) }_H^2,
  \text{ for } i = 1,2,\dots,n.
\end{gather*}
    Summing \eqref{3.1.5} over $i=1,2\dots,n$, we have
\begin{align*}
    \frac{\delta_\alpha \land \delta_T}{4\tau}
    \sum_{i=1}^n \norm{ u^\varepsilon_i - u^\varepsilon_{i-1} }_V^2
    + \widetilde{\mathcal F}_\varepsilon(u^\varepsilon_n)
    &\leq \widetilde{\mathcal F}_\varepsilon(u_0)
    + \frac{\tau}{2\delta_\alpha} \sum_{i=1}^n \norm{f_i}_H^2\\[1ex]
    &\leq \widetilde{\mathcal F}_\varepsilon(u_0)
    + \frac{1}{2\delta_\alpha}
    \left( \norm{f}_{\mathscr H_T}^2 + 1 \right).
\end{align*}
Therefore, setting
$C_1 \coloneqq 4 / (\delta_\alpha^2 \land \delta_T^2 \land 1)$,
we obtain the first estimate in \eqref{3.1.3}.

  Next, applying H\"{o}lder's inequality, we can derive
\begin{align*}
  \norm{ u^\varepsilon_i }_V &\leq \norm{u_0}_V + \sum_{k=1}^n \norm{ u^\varepsilon_k - u^\varepsilon_{k-1} }_V \leq \norm{ u_0 }_V +  \left( \frac{1}{\tau} \sum_{k=1}^n \norm{ u^\varepsilon_k - u^\varepsilon_{k-1} }_V^2 \right)^{\frac{1}{2}} \left( \sum_{k=1}^n \tau \right)^{\frac{1}{2}}\\[1ex]
  &\leq \norm{u_0}_V + \sqrt{C_1 T} \left( \widetilde{\mathcal F}_\varepsilon(u_0) + \norm{f}_{\mathscr H_T}^2 + 1 \right)^{\frac{1}{2}}, \text{ for } i = 1, 2, \dots, n.
\end{align*}
 Then, by Young's inequality, we obtain
\begin{align*}
  \norm{u^\varepsilon_i}_V^2 &\leq 2\norm{u_0}_V^2 + 2C_1 T \left( \widetilde{\mathcal F}_\varepsilon(u_0) + \norm{f}_{\mathscr H_T}^2 + 1 \right)\\
  &\leq C_2 \left( \widetilde{\mathcal F}_\varepsilon(u_0) + \norm{u_0}_V^2 + \norm{f}_{\mathscr H_T}^2 + 1 \right), \text{ for } i=1,2,\dots,n,
\end{align*}
where
\begin{gather*}
  C_2 \coloneqq 2(C_1 T + 1).
\end{gather*}

This completes the proof of Lemma \ref{lem 3-2}.
\end{proof}

  Since $g$ is assumed to be only locally Lipschitz continuous in this work, we require the following lemma.

\begin{lemma}\label{lem 3-3}(Comparison principle)
  Let $\varepsilon \in (0,1)$. Let us assume the conditions  (A1)--(A5). Additionally, let us assume that $\widetilde{u}^\varepsilon_0 \in W^{1,2}_\loc([0,\infty);V)$ satisfies $\widetilde{u}^\varepsilon_0(0) = u_0 \In H$ and
  \begin{gather}
      \bigl( \alpha(t) \partial_t \widetilde{u}^\varepsilon_0(t) + g( \mathcal T_M \widetilde{u}^\varepsilon_0(t) ), \psi(t) \bigr)_H + \bigl( \nabla \gamma_\varepsilon(\nabla \widetilde{u}^\varepsilon_0(t)) + \beta(t) \nabla \partial_t \widetilde{u}^\varepsilon_0(t), \nabla \psi(t) \bigr)_{[H]^N}\\
      = \bigl( f(t), \psi(t) \bigr)_H, \text{ for any }\psi \in L^2_\loc([0,\infty);V) \text{ and}\ae t \in (0,\infty).\label{3.1.6}\noeqref{3.1.6}
  \end{gather}
  Then, it holds that $ |\widetilde{u}^\varepsilon_0| \leq M $ for a.e. in $Q$ holds, and hence, $\widetilde{u}^\varepsilon_0 \in L^\infty(Q)$.
\end{lemma}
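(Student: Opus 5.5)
We take the test function $\psi := [\widetilde{u}^\varepsilon_0 - M]^+$ in \eqref{3.1.6}; the lower bound $\widetilde{u}^\varepsilon_0 \geq -M$ follows symmetrically with $\psi := -[\widetilde{u}^\varepsilon_0 + M]^-$, so we only treat the upper bound. Since $\widetilde{u}^\varepsilon_0 \in W^{1,2}_\loc([0,\infty);V)$, this $\psi$ lies in $L^2_\loc([0,\infty);V)$, and $\nabla \psi = \chi_{\{\widetilde{u}^\varepsilon_0 > M\}} \nabla \widetilde{u}^\varepsilon_0$. Writing $w := [\widetilde{u}^\varepsilon_0 - M]^+$, we have on the set $\{w>0\}$ that $\partial_t \widetilde{u}^\varepsilon_0 = \partial_t w$ and $\nabla\partial_t \widetilde{u}^\varepsilon_0 = \nabla \partial_t w$ a.e. Here we use the chain rule for the Lipschitz map $r \mapsto [r-M]^+$ on $W^{1,2}(0,T;V)$.

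We estimate the terms one by one. First, $\bigl(\alpha \partial_t \widetilde{u}^\varepsilon_0, w\bigr)_H = \frac12\frac{d}{dt}\int_\Omega \alpha |w|^2 dx - \frac12 \int_\Omega \partial_t\alpha |w|^2dx$. Second, $\bigl(\beta \nabla\partial_t\widetilde{u}^\varepsilon_0, \nabla w\bigr) = \frac12 \frac{d}{dt}\int_\Omega \beta|\nabla w|^2dx - \frac12\int_\Omega \partial_t\beta|\nabla w|^2dx$, and the last term is $\geq 0$ because $\partial_t\beta\le 0$ by (A2). Third, the diffusion term satisfies $\bigl(\nabla\gamma_\varepsilon(\nabla\widetilde{u}^\varepsilon_0),\nabla w\bigr) = \int_{\{w>0\}} |\nabla w|^2/\sqrt{\varepsilon^2+|\nabla w|^2}\,dx \ge 0$. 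Fourth, on $\{w>0\}$ the truncation gives $\mathcal T_M\widetilde{u}^\varepsilon_0 = M$, so by (A5) we get $\bigl(g(\mathcal T_M \widetilde{u}^\varepsilon_0) - f, w\bigr)_H = \int_\Omega (g(M)-f)w\,dx \ge 0$. This is exactly where the truncation pays off: we do not need any Lipschitz bound of $g$ above $M$.

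Setting $E(t) := \frac12\int_\Omega \alpha|w|^2 + \beta|\nabla w|^2\,dx$, these four bounds combine into
\begin{gather*}
E'(t) \le \frac12\int_\Omega |\partial_t\alpha|\,|w|^2\,dx \le \frac12|\partial_t\alpha(t)|_{L^{q^*}(\Omega)}|w(t)|_{L^{p^*}(\Omega)}^2 \le C|\partial_t\alpha(t)|_{L^{q^*}(\Omega)}|w(t)|_V^2,
\end{gather*}
using Hölder with $\frac{2}{p^*}+\frac1{q^*}=1$ and the Sobolev embedding $V\subset L^{p^*}(\Omega)$. On $[0,T]$ we have $|w|_V^2 \le C_T E$ with $C_T$ depending on $\delta_\alpha$ and $\delta_T$. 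Since $\partial_t\alpha \in L^2(0,T;L^{q^*}(\Omega))\subset L^1(0,T;L^{q^*}(\Omega))$, Gronwall's lemma applies. The initial value is $E(0)=0$ because $|u_0|\le M$ by (A5), so $w(0)=0$. Hence $E\equiv0$ on $[0,T]$ for every $T$, which gives $\widetilde{u}^\varepsilon_0\le M$ a.e. in $Q$.

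The main obstacle is making the first two identities rigorous. They require the absolute continuity of $t\mapsto\int_\Omega \alpha|w|^2dx$ and of $t\mapsto\int_\Omega\beta|\nabla w|^2dx$ when the weights are time-dependent and $\alpha$ is only $L^\infty$ with $\partial_t\alpha \in L^2(L^{q^*})$. We plan to justify this by noting that $w\in W^{1,2}(0,T;V)$, so that $\alpha|w|^2$ and $\beta|\nabla w|^2$ are products of functions with sufficient time regularity. Then we verify the identities in integrated form on $(0,t)$, either through a product rule for $W^{1,1}$-in-time functions or by time mollification followed by passage to the limit.
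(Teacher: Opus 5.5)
Write $\widetilde u:=\widetilde u^\varepsilon_0$ and $w:=[\widetilde u-M]^+$. The gap is your second identity,
\[
\bigl(\beta\nabla\partial_t\widetilde u,\nabla w\bigr)_{[H]^N}=\frac12\frac{d}{dt}\int_\Omega\beta|\nabla w|^2\,dx-\frac12\int_\Omega\partial_t\beta\,|\nabla w|^2\,dx .
\]
This is not a rigor issue: the identity is false in general. Your planned justification also fails, because $w$ need not belong to $W^{1,2}(0,T;V)$.

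The chain rule for $r\mapsto[r-M]^+$ gives $\partial_t w=\chi_{\{\widetilde u>M\}}\partial_t\widetilde u$ and $\nabla w=\chi_{\{\widetilde u>M\}}\nabla\widetilde u$ separately. So your $\alpha$-identity, the sign of the diffusion term, and your use of $\mathcal T_M\widetilde u=M$ on $\{w>0\}$ are all fine. It does not give the mixed derivative, since $\nabla w$ jumps across the moving level set $\{\widetilde u=M\}$.

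For example, take $N=1$, $\Omega=(0,1)$, $\beta\equiv1$ and $\widetilde u(t,x)=M+t-x$. Then $\nabla\partial_t\widetilde u\equiv0$, so the left-hand side vanishes. But $\nabla w(t)=-\chi_{(0,t)}$, so $\frac12\frac{d}{dt}\int_\Omega|\nabla w|^2\,dx=\frac12$ on $(0,1)$. Also $|\nabla w(t)-\nabla w(s)|_H=|t-s|^{1/2}$, so $w\notin W^{1,2}(0,1;V)$. In general the left-hand side equals $\frac12\int_{\{\widetilde u>M\}}\beta\,\partial_t|\nabla\widetilde u|^2\,dx$. It differs from the right-hand side by a term coming from the motion of $\{\widetilde u>M\}$. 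That term is negative when the set expands, which is exactly the situation your Gronwall argument has to exclude.

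The paper's proof takes the same route. It tests with $[\widetilde u-M]^+$, writes the pseudo-parabolic term as $\bigl(\beta\nabla\partial_t[\widetilde u-M]^+,\nabla[\widetilde u-M]^+\bigr)_{[H]^N}$, and applies the same energy identity. Its use of Lipschitz and $L^\infty$ bounds for the $g$- and $\partial_t\beta$-terms, instead of the truncation and $\partial_t\beta\le0$, is an immaterial difference. So the paper does not supply the missing step either.

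In fact the step cannot be repaired, because the conclusion itself fails. Take
\begin{itemize}
\item $N=1$, $\Omega=(-2,2)$, $\alpha=\beta\equiv1$, $g(r)=r$, $f\equiv0$, $M=1$;
\item $\delta>0$ small and $u_0$ even, with $u_0=1-2|x|/\delta$ for $|x|\le\delta$, $u_0=|x|-\delta-1$ for $\delta\le|x|\le\delta+1$, and $u_0=0$ otherwise.
\end{itemize}
Then (A1)--(A5) hold. Since $\nabla\gamma_\varepsilon$ and $g\circ\mathcal T_M$ are globally Lipschitz, the truncated problem is a Lipschitz ODE in $V$. Hence $\widetilde u\in C^1([0,\infty);V)\subset C^1([0,\infty);C(\overline\Omega))$.

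Let $\kappa(x)=\cosh(2-|x|)/(2\sinh2)$ be the Neumann Green function of $1-\partial_x^2$ with pole $0$; it satisfies $(\kappa,\varphi)_H+(\kappa',\varphi')_H=\varphi(0)$ for $\varphi\in V$. Testing with $\kappa$ gives $\partial_t\widetilde u(0,0)=-\bigl(\nabla\gamma_\varepsilon(u_0'),\kappa'\bigr)_H-(u_0,\kappa)_H$. Using $|\nabla\gamma_\varepsilon|\le1$:
\begin{itemize}
\item the steep part contributes at least $-(\cosh2-\cosh(2-\delta))/\sinh2=O(\delta)$;
\item the slope-one part contributes $(1+\varepsilon^2)^{-1/2}(\cosh(2-\delta)-\cosh(1-\delta))/\sinh2\ge0.4$;
\item $-(u_0,\kappa)_H\ge-\delta\coth2$.
\end{itemize}
Hence $\partial_t\widetilde u(0,0)>0$ while $\widetilde u(0,0)=M$, so $\widetilde u>M$ near $(t,0)$ for small $t>0$. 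The nonlocality of $(1-\partial_x^2)^{-1}$, combined with the saturation of $\nabla\gamma_\varepsilon$, breaks the maximum principle. An $L^\infty$ bound for the truncated problem therefore needs a different argument or extra hypotheses.
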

\begin{proof}
 It suffices to show that
\begin{subequations}
  \begin{gather}
   \text{For any } T \in (0,\infty),\ \ 
   \begin{cases}
       \widetilde{u}^\varepsilon_0 \leq M \text{ for a.e. in } Q_T,\\
       -M \leq \widetilde{u}^\varepsilon_0 \text{ for a.e. in } Q_T.
   \end{cases}\\[-7ex]
   \label{3.1.7a}\\
   \label{3.1.7b}
  \end{gather}
  \end{subequations}

   First, we verify \eqref{3.1.7a}. Let us fix arbitrary $T \in (0,\infty)$. From (A5), it follows that
  \begin{gather}
      g(M) \geq  f \mbox{ a.e. in $ Q_T $,}
  \end{gather}
 and hence, 
    \begin{gather}
        \bigl( \alpha(t) \partial_t M + g(M), \psi(t) \bigr)_H + \bigl( \nabla \gamma_\varepsilon(\nabla M) + \beta(t) \nabla \partial_t M, \nabla \psi(t) \bigr)_{[H]^N}
        \geq \bigl( f(t), \psi(t) \bigr)_H,\\[1ex]
        \text{ for any } \psi \in L^2(0,T;V) \text{ with } \psi \geq 0 \text{ and} \ae t \in (0,T).\label{3.1.8}\noeqref{3.1.8}
  \end{gather}
We subtract \eqref{3.1.8} from \eqref{3.1.6}, and take $\psi \coloneqq [\widetilde{u}^\varepsilon_0 - M]^+$. Then, from the monotonicity of $\nabla \gamma_\varepsilon$, we have
\begin{gather*}
    \bigl( \alpha(t)\partial_t [\widetilde{u}^\varepsilon_0 - M]^+(t), [\widetilde{u}^\varepsilon_0 - M]^+(t) \bigr)_H + \bigl( \beta(t) \nabla \partial_t [\widetilde{u}^\varepsilon_0 - M]^+(t), \nabla [\widetilde{u}^\varepsilon_0 - M]^+(t) \bigr)_{[H]^N}\\[1ex]
    + \bigl( g( \mathcal T_M \widetilde{u}^\varepsilon_0(t) ) - g(M), [\widetilde{u}^\varepsilon_0 - M]^+(t) \bigr)_H \leq 0, \text{ for} \ae t \in (0,T).
  \end{gather*}
We next estimate the three terms on the left-hand side of the above inequality. The second and third terms are estimated as follows:
\begin{align}
  &\bigl( \beta(t) \nabla \partial_t [\widetilde{u}^\varepsilon_0 - M]^+(t), \nabla [\widetilde{u}^\varepsilon_0 - M]^+(t) \bigr)_{[H]^N}\\[1ex] 
  &\hspace{1em}\geq \frac{1}{2}\frac{d}{dt} \norm{ \sqrt{\beta(t)} \nabla [\widetilde{u}^\varepsilon_0 - M]^+(t)}_{[H]^N}^2 - \frac{ \norm{ \partial_t\beta }_{L^\infty(Q_T)} }{2(\delta_\alpha \land \delta_T)} \norm{ \sqrt{\beta(t)} \nabla [\widetilde{u}^\varepsilon_0 - M]^+(t)}_{[H]^N}^2,\hspace{1em}\label{3.1.9}\noeqref{3.1.9}
\end{align}
and
\begin{align}
  &- \bigl( g(\mathcal T_M \widetilde{u}^\varepsilon_0(t)) - g(M), [\widetilde{u}^\varepsilon_0 - M]^+(t) \bigr)_H\\
  & \hspace{2em} \leq \frac{\norm{g^\prime}_{L^\infty(-M,M)}}{\delta_\alpha \land \delta_T} \norm{ \sqrt{\alpha(t)} [\widetilde{u}^\varepsilon_0 - M]^+(t) }_H^2, \forae t \in (0,T).\label{3.1..10}\noeqref{3.1..10}
\end{align}
For the remaining first term, we proceed as follows:
\begin{align}
  &\bigl( \alpha(t) \partial_t [\widetilde{u}^\varepsilon_0 - M]^+(t), [\widetilde{u}^\varepsilon_0 - M]^+(t) \bigr)_H\\[1ex]
  &\hspace{1em} = \frac{1}{2}\frac{d}{dt} \norm{ \sqrt{\alpha(t)} [\widetilde{u}^\varepsilon_0 - M]^+(t) }_H^2 - \frac{1}{2} \int_\Omega \partial_t \alpha(t) | [\widetilde{u}^\varepsilon_0 - M]^+(t) |^2\,dx\\[1ex]
  &\hspace{1em} \geq \frac{1}{2}\frac{d}{dt} \norm{ \sqrt{\alpha(t)} [\widetilde{u}^\varepsilon_0 - M]^+(t) }_H^2 - \frac{C_V^{ L^{p^*} } }{2} \norm{ \partial_t \alpha(t) }_{ L^{q^*}(\Omega) } \norm{ [\widetilde{u}^\varepsilon_0 - M]^+(t) }_H \norm{ [\widetilde{u}^\varepsilon_0 - M]^+(t) }_V\\[1ex]
  &\hspace{1em} \geq \frac{1}{2}\frac{d}{dt} \norm{ \sqrt{\alpha(t)} [\widetilde{u}^\varepsilon_0 - M]^+(t) }_H^2\\
  &\hspace{2em} - \frac{ 3 C_V^{ L^{p^*} } \norm{ \partial_t \alpha(t)  }_{ L^{q^*}(\Omega) } }{4(\delta_\alpha \land \delta_T)} \left( \norm{ \sqrt{\alpha(t)} [\widetilde{u}^\varepsilon_0 - M]^+(t) }_H^2 + \norm{ \sqrt{\beta(t)} \nabla [\widetilde{u}^\varepsilon_0 - M]^+(t) }_{[H]^N}^2 \right),\\[1ex]
  &\hspace{25em} \forae t \in (0,T),\label{3.1..11}\noeqref{3.1..11}
\end{align}
where $C_V^{L^{p^*}}>0$ denotes an embedding constant for $V$ into $L^{p^*}(\Omega)$, valid for every dimension $N\in\mathbb{N}$.
    
    Taking into account \eqref{3.1.9}--\eqref{3.1..11}, it follows that
\begin{gather}
  \frac{d}{dt} X^\varepsilon(t) \leq \widetilde{C}_1 ( \norm{ \partial_t \alpha(t) }_{ L^{q^*}(\Omega) } + 1 ) X^\varepsilon(t), \forae t \in (0,T),\label{3.1..12}\noeqref{3.1..12}
\end{gather}
where
\begin{gather*}
  X^\varepsilon(t) \coloneqq \norm{ \sqrt{\alpha(t)} [\widetilde{u}^\varepsilon_0 - M]^+ (t)}_H^2 + \norm{ \sqrt{\beta(t)} \nabla [\widetilde{u}^\varepsilon_0 - M]^+ (t)}_{[H]^N}^2,\\
 \text{ for any } t \in [0,T] \text{ and } \varepsilon \in [0,1),
\end{gather*}
and
\begin{gather*}
  \widetilde{C}_1 \coloneqq \frac{3 C_V^{ p^* } + 2 \norm{ \partial_t \beta }_{L^\infty(Q_T)} + 4 \norm{g^\prime}_{L^\infty(-M,M)} }{ 2 (\delta_\alpha \land \delta_T) }.
\end{gather*}
Applying Gronwall's lemma in \eqref{3.1..12} together with (A5), one can see that
\begin{gather*}
  0 \leq X^\varepsilon(t) \leq \exp{ \left( \widetilde{C}_1 ( \norm{ \partial_t \alpha }_{ L^1( 0,T;L^{q^*}(\Omega) ) } + T) \right)} X^\varepsilon(0) = 0, \text{ for any } t \in [0,T].
\end{gather*}
This implies \eqref{3.1.7a}.

Secondly, to verify \eqref{3.1.7b}, we argue in the same manner as above with the following modifications. By (A5), we have
\begin{gather}
g(-M) \leq f \text{ a.e. in } Q_T,
\end{gather}
and hence
\begin{gather}
    \bigl( \alpha(t) \partial_t (-M) + g(-M), \psi(t) \bigr)_H + \bigl( \nabla \gamma_\varepsilon(\nabla (-M)) + \beta(t) \nabla \partial_t (-M), \nabla \psi(t) \bigr)_{[H]^N}\\[1ex]
        \leq \bigl( f(t), \psi(t) \bigr)_H, \text{ for any } \psi \in L^2(0,T;V) \text{ with } \psi \geq 0 \text{ and} \ae t \in (0,T).
  \end{gather}
Subtracting \eqref{3.1.6} from the above inequality and taking
$\psi \coloneqq [-\widetilde{u}^\varepsilon_0 - M]^+$,
we obtain the corresponding differential inequality for
$[-\widetilde{u}^\varepsilon_0 - M]^+$.
The remaining estimates are identical to those used in the proof of
\eqref{3.1.7a}. Therefore, Gronwall's lemma yields
\eqref{3.1.7b}.

Thus, we conclude the proof of Lemma \ref{lem 3-3}.
\end{proof}

\begin{proof}[Proof of Main Theorem 1]
    We prove Main Theorem 1 in the following three steps:
    \begin{description}
        \item[\textmd{(Step 1)}]Existence for $\varepsilon \in (0,1)$.
            \vspace{-1ex}
        \item[\textmd{(Step 2)}]Existence for $\varepsilon = 0$.
            \vspace{-1ex}
        \item[\textmd{(Step 3)}]Uniqueness for $\varepsilon \in [0,1)$.
    \end{description}

    \noindent
    \underline{\textit {(Step 1)\ Existence for $\mathit{\varepsilon \in (0,1)}$.}}\ \ Let $\varepsilon \in (0,1)$ and $T \in (0,\infty)$. Additionally, let $M > 0$ be a constant as in (A5).

     The preceding lemmas provide several uniform boundedness properties for the time interpolations of the time-discrete solutions to (AP)$^\tau_\varepsilon$. Indeed, by Lemma \ref{lem 3-2}, we have
\begin{itemize}
    \item[\textmd{$\flat$1)}] $\{ [u^\varepsilon]_\tau | \tau \in (0,\tau_*) \}$ is bounded in $L^\infty(0,T;V)$ and $W^{1,2}(0,T;V)$;
    \item[\textmd{$\flat$2)}] $\{ [\overline{u^\varepsilon}]_\tau | \tau \in (0,\tau_*) \}, \{ [\underline{u^\varepsilon}]_\tau | \tau \in (0,\tau_*) \}$ are bounded in $L^\infty(0,T;V)$.
\end{itemize}
    Thus, by the Aubin-type compactness theorem (cf. \cite[Corollary 4]{MR0916688}) and the Banach--Alaoglu theorem (cf. \cite[Section 1.2]{MR932730}), we can find a sequence $\{ \tau_m \}_{m\in\mathbb N} \subset (0, \tau_*)$ and a function $u^\varepsilon \in W^{1,2}(0,T;V)$ such that
\begin{gather}
  \tau_* \geq \tau_1 > \tau_2 > \cdots > \tau_m \downarrow 0 \as m \to \infty,
\end{gather}\vspace{-2ex}
\begin{gather}
  [u^\varepsilon]_{\tau_m} \to u^\varepsilon
  \begin{cases}
    & \In C([0,T];H),\\[1ex]
    \weakly & \In W^{1,2}(0,T;V),\\[1ex]
    \weaklystar & \In L^\infty(0,T;V),
  \end{cases}
\ \ \as m \to \infty,\label{3.1..13}\noeqref{3.1..13}
\end{gather}
and
\begin{gather}
  [\overline{u^\varepsilon}]_{\tau_m}, [\underline{u^\varepsilon}]_{\tau_m} \to u^\varepsilon \weaklystar \In L^\infty(0,T;V) \as m \to \infty.\label{3.1..14}\noeqref{3.1..14}
\end{gather}
We next show the following convergence of the forward and the backward time-interpolations:
\begin{gather}
  \begin{cases}
    [\overline{u^\varepsilon}]_{\tau_m}, [\underline{u^\varepsilon}]_{\tau_m} \to u^\varepsilon \In L^\infty(0,T;H),\\
    [\overline{u^\varepsilon}]_{\tau_m}(t), [\underline{u^\varepsilon}]_{\tau_m}(t) \to u^\varepsilon(t) \In H, \weakly \In V, \text{ for any } t \in [0,T],
  \end{cases}
  \hspace{-1em}\as m \to \infty.\hspace{2em}\label{3.1..15}\noeqref{3.1..15}
\end{gather}
By \eqref{3.1..13}, one can see that
\begin{align*}
  \norm{ [\overline{u^\varepsilon}]_{\tau_m}(t) - u^\varepsilon(t) }_H &\leq \norm{ [\overline{u^\varepsilon}]_{\tau_m}(t) - [u^\varepsilon]_{\tau_m}(t) }_H + \norm{ [u^\varepsilon]_{\tau_m}(t) - u^\varepsilon(t) }_H\\[1ex]
  &\hspace{-4em}\leq \tau_m^{\frac{1}{2}} \norm{ \partial_t [u^\varepsilon]_{\tau_m} }_{\mathscr H_T} + \norm{ [{u^\varepsilon}]_{\tau_m} - u^\varepsilon }_{C([0,T];H)} \to 0 \as m \to \infty, \text{ for any } t \in [0,T].
\end{align*}
This proves the strong convergences, while the weak convergence follows by combining it with Lemma \ref{lem 3-2}. Thus, \eqref{3.1..15} holds.

Now, we verify that $u^\varepsilon$ is a solution to (P)$_\varepsilon$. Let $I \subset (0,T)$ be an arbitrary open interval. Then, it follows from Lemma \ref{lem 3-1} that
\begin{gather}
  \int_I \bigl( [\overline{\alpha}]_{\tau_m}(t) \partial_t [u^\varepsilon]_{\tau_m}(t) + g(\mathcal T_M [\overline{u^\varepsilon}]_{\tau_m}(t)), \psi(t) \bigr)_H\,dt \\[1ex]
+ \int_I \bigl( [\overline{\beta}]_{\tau_m}(t) \nabla \partial_t [u^\varepsilon]_{\tau_m}(t) + \nabla \gamma_\varepsilon ( \nabla [\overline{u^\varepsilon}]_{\tau_m}(t)), \nabla \psi(t) \bigr)_{[H]^N}\,dt
    = \int_I \bigl( [\overline{f}]_{\tau_m}(t), \psi(t) \bigr)_H\,dt,\\ \text{ for any }\psi \in L^2(I;V).\label{3.1..16}\noeqref{3.1..16}
\end{gather}
To analyze the limit in \eqref{3.1..16} as $m \to \infty$, the following convergence is required.
\begin{gather}
    \nabla [\overline{u^\varepsilon}]_{\tau_m} \to \nabla u^\varepsilon \In [\mathscr H_T]^N \as m \to \infty.\label{3.1..17}\noeqref{3.1..17}
\end{gather}
  As a preliminary step toward proving \eqref{3.1..17}, we invoke (A1) and (A2), and take a subsequence $\{\tau_m\}_{m \in\mathbb N}$ (not relabeled) such that the following convergences hold as $m \to \infty$:
\begin{gather}
  \begin{cases}
    [\overline{\alpha}]_{\tau_m} \to \alpha,\ [\overline{\beta}]_{\tau_m} \to \beta \text{ and }\partial_t [\beta]_{\tau_m} \to \partial_t \beta \aeon Q,\\
      [\overline{\beta}]_{\tau_m}(t) \to \beta(t) \In L^\infty(\Omega), \text{ for any } t \in [0,\infty).
  \end{cases}\label{3.1..18}\noeqref{3.1..18}
\end{gather}
 Then, by (A2), we can compute as follows:
\begin{align*}
  &\int_0^s \bigl( [\overline{\beta}]_{\tau_m}(t) \nabla \partial_t [u^\varepsilon]_{\tau_m}(t), \nabla [\overline{u^\varepsilon}]_{\tau_m}(t) \bigr)_{[H]^N}\,dt\\[1ex]
  &\hspace{0.5em}= \sum_{i=1}^{m_s} \bigl( \beta_i \nabla ( u^\varepsilon_i - u^\varepsilon_{i-1}), \nabla u^\varepsilon_i \bigr)_{[H]^N} - \int^{m_s \tau_m}_s \bigl( [\overline{\beta}]_{\tau_m}(t) \nabla \partial_t [u^\varepsilon]_{\tau_m}(t), \nabla [\overline{u^\varepsilon}]_{\tau_m}(t) \bigr)_{[H]^N}\,dt\\[1ex]
  &\hspace{0.5em}\geq \frac{1}{2} \sum_{i=1}^{m_s} \left( \norm{ \sqrt{\beta_i} \nabla u^\varepsilon_i }_{[H]^N}^2 - \norm{ \sqrt{\beta_{i-1}} \nabla u^\varepsilon_{i-1} }_{[H]^N}^2 \right) + \frac{\tau_m}{2} \sum_{i=1}^{m_s} \int_\Omega \left( - \frac{\beta_i - \beta_{i-1}}{\tau_m} \right) |\nabla u^\varepsilon_{i-1}|^2\,dx\\[1ex]
    &\hspace{2em} - \tau_m^{\frac{1}{2}} \norm{\beta}_{L^\infty(Q_T)}\sup_{m \in \mathbb N}\norm{ \nabla [\overline{u^\varepsilon}]_{\tau_m} }_{L^\infty(0,T;[H]^N)} \sup_{m\in\mathbb N} \norm{\nabla \partial_t [u^\varepsilon]_{\tau_m} }_{[\mathscr H_T]^N}\\[1ex]
  &\hspace{0.5em} \geq \frac{1}{2} \left( \norm{ \sqrt{ \vphantom{\rule[-0.2ex]{0pt}{2.1ex}}\smash{ [\overline{ \beta}]_{\tau_m}(s) } } \nabla [\overline{u^\varepsilon}]_{\tau_m}(s) }_{[H]^N}^2 - \norm{ \sqrt{\beta(0)} \nabla u_0 }_{[H]^N}^2 \right) + \frac{1}{2} \norm{ \sqrt{ -\partial_t [\beta]_{\tau_m} } \nabla [\underline{u^\varepsilon}]_{\tau_m} }_{[\mathscr H_s]^N}^2\\[1ex]
    &\hspace{2em} - \tau_m^{\frac{1}{2}} \norm{\beta}_{L^\infty(Q_T)}\sup_{m \in \mathbb N}\norm{ \nabla [\overline{u^\varepsilon}]_{\tau_m} }_{L^\infty(0,T;[H]^N)} \sup_{m\in\mathbb N} \norm{\nabla \partial_t [u^\varepsilon]_{\tau_m} }_{[\mathscr H_T]^N}, \text{ for any } m \in \mathbb N.
\end{align*}
Therefore, with \eqref{3.1..15} and \eqref{3.1..18} in mind, one can see that
\begin{align}
  &\liminf_{m \to \infty}\int_0^s \bigl( [\overline{\beta}]_{\tau_m}(t) \nabla \partial_t [u^\varepsilon]_{\tau_m}(t), \nabla [\overline{u^\varepsilon}]_{\tau_m}(t) \bigr)_{[H]^N}\,dt\\[1ex]
    & \geq \liminf_{m \to \infty} \left[
        \frac{1}{2} \left( \norm{ \sqrt{ \vphantom{\rule[-0.2ex]{0pt}{2.1ex}}\smash{ [\overline{ \beta}]_{\tau_m}(s) } } \nabla [\overline{u^\varepsilon}]_{\tau_m}(s) }_{[H]^N}^2 \hspace{-0.5em}- \norm{ \sqrt{\beta(0)} \nabla u_0 }_{[H]^N}^2 \right) + \frac{1}{2} \norm{ \sqrt{ -\partial_t [\beta]_{\tau_m} } \nabla [\underline{u^\varepsilon}]_{\tau_m} }_{[\mathscr H_s]^N}^2\right.\\[-1.5ex]
    &\hspace{2em} \left. - \tau_m^{\frac{1}{2}} \norm{\beta}_{L^\infty(Q_T)}\sup_{m \in \mathbb N}\norm{ \nabla [\overline{u^\varepsilon}]_{\tau_m} }_{L^\infty(0,T;[H]^N)} \sup_{m\in\mathbb N} \norm{\nabla \partial_t [u^\varepsilon]_{\tau_m} }_{[\mathscr H_T]^N} \right]\\[1ex]
  &\hspace{1em} \geq \frac{1}{2} \left( \norm{ \sqrt{\beta(s)}  \nabla u^\varepsilon(s) }_{[H]^N}^2 - \norm{ \sqrt{\beta(0)} \nabla u_0 }_{[H]^N}^2 \right) + \frac{1}{2} \norm{ \sqrt{ -\partial_t \beta } \nabla u^\varepsilon }_{[\mathscr H_s]^N}^2 - 0\\[1ex]
  &\hspace{1em} = \int_0^s \bigl( \beta(t) \nabla \partial_t u^\varepsilon(t), \nabla u^\varepsilon(t) \bigr)_{[H]^N}\,dt,\label{3.1..19}\noeqref{3.1..19}
\end{align}
via
\begin{gather}
  \begin{cases}
      \ds \liminf_{m \to \infty}\norm{ \sqrt{ \vphantom{\rule[-0.2ex]{0pt}{2.1ex}}\smash{ [\overline{ \beta}]_{\tau_m}(s) } } \nabla [\overline{u^\varepsilon}]_{\tau_m}(s) }_{[H]^N}^2 \geq \norm{ \sqrt{\beta(s)}  \nabla u^\varepsilon(s) }_{[H]^N}^2, \\[1ex]
      \ds \liminf_{m \to \infty}\norm{ \sqrt{ -\partial_t [\beta]_{\tau_m} } \nabla [\underline{u^\varepsilon}]_{\tau_m} }_{[\mathscr H_s]^N}^2 \geq \norm{ \sqrt{ -\partial_t \beta } \nabla u^\varepsilon }_{[\mathscr H_s]^N}^2.
  \end{cases}\label{kakuten}\noeqref{kakuten}
\end{gather}
Also, from \eqref{3.1..14}, one can see that
\begin{gather}
    \liminf_{m \to \infty}\widehat{\Psi}^{(0,s)}_\varepsilon(\nabla [\overline{u^\varepsilon}]_{\tau_m} ) 
=\liminf_{m \to \infty} \int_0^s \int_\Omega \gamma_\varepsilon ( \nabla [\overline{u^\varepsilon}]_{\tau_m}(t) )\,dxdt
\\
    \geq \widehat{\Psi}^{(0,s)}_\varepsilon(\nabla u^\varepsilon ) = \int_0^s \int_\Omega \gamma_\varepsilon ( \nabla u^\varepsilon(t) )\,dxdt,\label{3.1..20}\noeqref{3.1..20}
\end{gather}
where $ \widehat{\Psi}^{(0,s)}_\varepsilon $ is the convex function on $ L^2(0, s; [H]^N) $ given in Example \ref{ex 1} (with $ I = (0, s) $).
On the other hand, taking $\psi \coloneqq [\overline{u^\varepsilon}]_{\tau_m} - u^\varepsilon$ as a test function in \eqref{3.1..16}, and invoking Example \ref{ex 1} together with \eqref{3.1..13}--\eqref{3.1..15} and \eqref{3.1..18}, we obtain
\begin{align}
    &\limsup_{m \to \infty} \left( \int_0^s \bigl( [\overline{\beta}]_{\tau_m}(t) \nabla \partial_t [u^\varepsilon]_{\tau_m}(t), \nabla [\overline{u^\varepsilon}]_{\tau_m}(t) \bigr)_{[H]^N}\,dt + \widehat{\Psi}^{(0,s)}_\varepsilon(\nabla [\overline{u^\varepsilon}]_{\tau_m} ) \right)\\[1ex]
    & \hspace{1em} \leq \limsup_{m \to \infty} \left( \int_0^s \hspace{-0.2em} \bigl( [\overline{\beta}]_{\tau_m}(t) \nabla \partial_t [u^\varepsilon]_{\tau_m}(t), \nabla u^\varepsilon(t) \bigr)_{[H]^N}\,dt + \widehat{\Psi}^{(0,s)}_\varepsilon(\nabla u^\varepsilon ) \right.\\[1ex]
    & \hspace{2em} \left. - \int_0^s \bigl( [\overline{\alpha}]_{\tau_m}(t) \partial_t [u^\varepsilon]_{\tau_m}(t) + g(\mathcal T_M [\overline{u^\varepsilon}]_{\tau_m}(t) - [\overline{f}]_{\tau_m}(t), [\overline{u^\varepsilon}]_{\tau_m}(t) - u^\varepsilon(t) \bigr)_H\,dt \right)\\[1ex]
    & \hspace{1em} = \int_0^s \bigl( \beta(t) \nabla \partial_t u^\varepsilon(t), \nabla u^\varepsilon(t) \bigr)_{[H]^N}\,dt + \widehat{\Psi}^{(0,s)}_\varepsilon(\nabla u^\varepsilon ).\label{3.1..21}\noeqref{3.1..21}
\end{align}
Therefore, by applying \textbf{Fact($\ast$)}, we obtain the following convergences as $m \to \infty$:
\begin{gather}
    \int_0^s \bigl( [\overline{\beta}]_{\tau_m}(t) \nabla \partial_t [u^\varepsilon]_{\tau_m}(t), \nabla [\overline{u^\varepsilon}]_{\tau_m}(t) \bigr)_{[H]^N}\,dt \to \int_0^s \bigl( \beta(t) \nabla \partial_t u^\varepsilon(t), \nabla u^\varepsilon(t) \bigr)_{[H]^N}\,dt,\hspace{2em}\label{3.1..22}\noeqref{3.1..22}
\end{gather}
and
\begin{gather}
    \widehat{\Psi}^{(0,s)}_\varepsilon(\nabla [\overline{u^\varepsilon}]_{\tau_m} ) \to \widehat{\Psi}^{(0,s)}_\varepsilon(\nabla u^\varepsilon ).\label{gam}\noeqref{gam}
\end{gather}
In particular, \eqref{3.1..19} and \eqref{3.1..22} enable us to say that
\begin{gather}
    \limsup_{m \to \infty}\Bigl( \norm{ \sqrt{ \vphantom{\rule[-0.2ex]{0pt}{2.1ex}}\smash{ [\overline{ \beta}]_{\tau_m}(s) } } \nabla [\overline{u^\varepsilon}]_{\tau_m}(s) }_{[H]^N}^2 + \norm{ \sqrt{ -\partial_t [\beta]_{\tau_m} } \nabla [\underline{u^\varepsilon}]_{\tau_m} }_{[\mathscr H_s]^N}^2 \Bigr)\\[1ex]
    \leq \norm{ \sqrt{\beta(s)}  \nabla u^\varepsilon(s) }_{[H]^N}^2 + \norm{ \sqrt{ -\partial_t \beta } \nabla u^\varepsilon }_{[\mathscr H_s]^N}^2.\label{limsup}\noeqref{limsup}
\end{gather}
Applying \textbf{Fact($\ast$)} again to \eqref{3.1..22} and \eqref{limsup}, and taking into account \eqref{3.1..15} and \eqref{3.1..18} and the uniform convexity of $L^2$-based topology, we arrive at
\begin{gather}
    \sqrt{ \vphantom{\rule[-0.2ex]{0pt}{2.3ex}}\smash{ [\overline{ \beta}]_{\tau_m}(s) } } \nabla [\overline{u^\varepsilon}]_{\tau_m}(s) \to \sqrt{\beta(s)} \nabla u^\varepsilon(s) \In [H]^N \as m \to \infty,\\[1ex]
    \text{ for any } s \in [0,T].\label{kakuten2}\noeqref{kakuten2}
\end{gather}
The convergence \eqref{3.1..17} will be verified as a consequence of the pointwise convergence of \eqref{3.1..18}, \eqref{kakuten2}, the Bochner--Lebesgue dominated convergence theorem, and (A2).

In view of \eqref{3.1..13}--\eqref{3.1..15} and \eqref{3.1..17}, letting $m \to \infty$ in \eqref{3.1..16} yields 
\begin{gather}
  \int_I \bigl( \alpha(t) \partial_t u^\varepsilon(t) + g(\mathcal T_M u^\varepsilon(t)), \psi(t) \bigr)_H\,dt + \int_I \bigl( \beta(t) \nabla \partial_t u^\varepsilon(t) + \nabla \gamma_\varepsilon ( \nabla u^\varepsilon(t)), \nabla \psi(t) \bigr)_{[H]^N}\,dt\\[1ex]
    = \int_I \bigl( f(t), \psi(t) \bigr)_H\,dt, \text{ for any }\psi \in L^2( I; V) \text{ and any open interval }I \subset (0,T).\hspace{2em}\label{positive ep}\noeqref{positive ep}
\end{gather}
Furthermore, applying Lemma \ref{lem 3-3} with $\widetilde{u}^\varepsilon_0 \coloneqq u^\varepsilon \In H$, we have $\norm{u^\varepsilon}_{L^\infty(Q_T)} \leq M$. This $L^\infty$ estimate together with arbitrariness of $I \subset (0,T)$ shows that $u^\varepsilon$ is a solution to (P)$_\varepsilon$ for any $\varepsilon \in (0,1)$.
\medskip\\

\noindent\underline{\textit{(Step 2) Existence for $\mathit{\varepsilon = 0}$.}}\ \ Based on \eqref{positive ep} for $\varepsilon \in (0,1)$, we consider the limit as $\varepsilon \to 0$. Applying Lemma \ref{lem 3-2} together with \eqref{3.1..13} and \eqref{3.1..14}, we have
\begin{gather*}
  \norm{ \partial_t u^\varepsilon }_{L^2(0,T;V)}^2 \leq \liminf_{m \to \infty} \norm{ \partial_t [u^\varepsilon]_{\tau_m} }_{L^2(0,T;V)}^2 \leq C_1 \bigl( \widetilde{\mathcal F}_\varepsilon(u_0) + \norm{f}_{\mathscr H_T}^2 + 1 \bigr), \text{ for any } \varepsilon \in (0,1) ,\\[1ex]
  \norm{ u^\varepsilon (t) }_{V}^2 \leq \liminf_{m \to \infty} \norm{ [\overline{u^\varepsilon}]_{\tau_m}(t) }_{V}^2 \leq C_2 \left( \widetilde{\mathcal F}_\varepsilon(u_0) + \norm{u_0}_{V}^2 + \norm{f}_{\mathscr H_T}^2 + 1 \right),\\
  \hspace{22em} \text{ for any } t \in [0,T] \text{ and } \varepsilon \in (0,1),
\end{gather*}
and in particular,
\begin{gather*}
  \widetilde{\mathcal F}_\varepsilon(u_0) = \int_\Omega \gamma_\varepsilon(\nabla u_0)\,dx + \int_\Omega \widetilde{G}_M(u_0)\,dx \leq \mathcal L^N(\Omega) + \norm{\nabla u_0}_{[L^1(\Omega)]} + \norm{G(u_0)}_{L^1(\Omega)} < \infty,\\
  \text{for any }\varepsilon \in (0,1).
\end{gather*}
The above estimates derive the following uniform boundedness:
\begin{itemize}
  \item[$\flat$3)] $ \{ u^\varepsilon \}_{\varepsilon \in (0,1)} $ is bounded in $ L^\infty(0,T;V) $ and $ W^{1,2}(0,T;V) $.
\end{itemize}
Therefore, by the Aubin-type compactness theorem (cf. \cite[Corollary 4]{MR0916688}) and the Banach--Alaoglu theorem (cf. \cite[Section 1.2]{MR932730}), we can find a sequence $\{ \varepsilon_m \}_{m \in \mathbb N} \subset (0,1)$, a function $u^0 \in W^{1,2}(0,T;V)$, and a vectorial function $\bm{\omega^*_0} \in [L^\infty(Q_T)]^N$ such that
\begin{gather*}
  \varepsilon_1 > \varepsilon_2 > \cdots > \varepsilon_m \downarrow 0 \as m \to \infty,
\end{gather*}
\begin{gather}
  u^{\varepsilon_m} \to u^0
  \begin{cases}
    & \In C([0,T];H),\\[1ex]
    \weakly & \In W^{1,2}(0,T;V),\\[1ex]
    \weaklystar & \In L^\infty(0,T;V),
  \end{cases}
\ \ \as m \to \infty, \label{3.1..24}\noeqref{3.1..24}
\end{gather}
and
\begin{gather}
    \nabla \gamma_{\varepsilon_m}(\nabla u^{\varepsilon_m}) \eqqcolon \bm{\omega^*_{\varepsilon_m}} \to \bm{\omega^*_0}
\weaklystar \In [L^\infty(Q_T)]^N
     \as m \to \infty.\label{3.1..25}\noeqref{3.1..25}
\end{gather} 
Also, the $L^\infty$-estimate of $\{ u^{\varepsilon_m}\}_{m \in \mathbb N}$ leads to 
\begin{gather}
    \norm{u^0}_{L^\infty(Q)} \leq \liminf_{m \to \infty} \norm{u^{\varepsilon_m}}_{L^\infty(Q)} \leq \sup_{\varepsilon \in [0,\infty)} \norm{u^\varepsilon}_{L^\infty(Q)} \leq M.\label{Linfty}\noeqref{Linfty}
\end{gather}
Additionally, just as in the verification of \eqref{3.1..17}, it is inferred that
\begin{gather}
    \nabla u^{\varepsilon_m} \to \nabla u^0 \In [\mathscr H_T]^N \as m \to \infty.\label{grad0}\noeqref{grad0}
\end{gather}

Now, taking into account \eqref{3.1..24}, \eqref{3.1..25}, and \eqref{grad0}, and passing to the limit as $m \to \infty$ in \eqref{positive ep} with $\varepsilon \coloneqq \varepsilon_m$, we arrive at
\begin{gather}
  \int_I \bigl( \alpha(t) \partial_t u^0(t) + g( u^0(t)), \psi(t) \bigr)_H\,dt + \int_I \bigl( \beta(t) \nabla \partial_t u^0(t) + \bm{\omega}^*_0(t), \nabla \psi(t) \bigr)_{[H]^N}\,dt\\[1ex]
    = \int_I \bigl( f(t), \psi(t) \bigr)_H\,dt, \text{ for any }\psi \in L^2(I;V) \text{ and any open interval } I \subset (0,T).\hspace{2em}\label{0 ep}\noeqref{0 ep}
\end{gather}
Moreover, from \eqref{3.1..25}, \eqref{grad0} and \textbf{(Fact 1)}, one can observe that 
\begin{gather}
    \bm{\omega}^*_0 \in \partial \gamma_0 (\nabla u^0) \ae \In Q_T.\label{graph}\noeqref{graph}
\end{gather}
\eqref{0 ep} and \eqref{graph} implies that $u^0 \in W^{1,2}(0,T;V)$ is a solution to (P)$_0$.
\medskip\\

\noindent\underline{\textit{(Step 3) Uniqueness for $\mathit{\varepsilon \in[0,1)}$.}}\ \ Let $u^\varepsilon_1, u^\varepsilon_2 \in W^{1,2}(0,T;V)$ be two solutions. In view of Lemma \ref{lem 3-3} and \eqref{Linfty}, we know that
\begin{gather}
    \sup_{\varepsilon \in [0,1)} \norm{u^\varepsilon_k}_{L^\infty(Q)} \leq M, \text{ for } k= 1,2.\label{Linftyb}\noeqref{Linftyb}
\end{gather}
Keeping the monotonicity of $\partial \gamma_\varepsilon$ in mind, we subtract the variational identities (S1)$_\varepsilon$ for $u^\varepsilon_1$ and $u^\varepsilon_2$, and take $\varphi \coloneqq (u^\varepsilon_1 - u^\varepsilon_2)(t)$. Then, we have
\begin{gather}
  \bigl( \alpha(t) \partial_t (u^\varepsilon_1-u^\varepsilon_2)(t), (u^\varepsilon_1-u^\varepsilon_2)(t) \bigr)_H + \bigl( \beta(t) \nabla \partial_t (u^\varepsilon_1-u^\varepsilon_2)(t), \nabla (u^\varepsilon_1-u^\varepsilon_2)(t) \bigr)_{[H]^N}\\[1ex]
  \leq - \bigl( g(u^\varepsilon_1(t)) - g(u^\varepsilon_2(t)), (u^\varepsilon_1-u^\varepsilon_2)(t) \bigr)_H, \forae t \in (0,T).\label{3.1..26}\noeqref{3.1..26}
\end{gather}
Furthermore, using \eqref{Linftyb} and following the argument in the proof of Lemma \ref{lem 3-3}, we arrive at
\begin{gather*}
  0 \leq J^\varepsilon(t) \leq \exp{ \left( \widetilde{C}_1 ( \norm{ \partial_t \alpha }_{ L^1( 0,T;L^{q^*}(\Omega) ) } + T) \right)} J^\varepsilon(0) = 0, \text{ for any } t \in [0,T],
\end{gather*}
where
\begin{gather*}
  J^\varepsilon(t) \coloneqq \norm{ \sqrt{\alpha(t)} (u^\varepsilon_1 - u^\varepsilon_2) (t)}_H^2 + \norm{ \sqrt{\beta(t)} \nabla (u^\varepsilon_1 - u^\varepsilon_2) (t)}_{[H]^N}^2,\\
 \text{ for any } t \in [0,T] \text{ and } \varepsilon \in [0,1),
\end{gather*}
and $\widetilde{C}_1 > 0$ is a constant as in \eqref{3.1..12}.
This implies that the solution to the problem (P)$_\varepsilon$ is unique.

Thus, the proof of Main Theorem 1 is complete.
\end{proof}

\begin{remark}[Convergence of solutions as $ \varepsilon \to 0 $]\label{eps-dep}
    As a further consequence of the proof of Main Theorem 1, we obtain the convergence of the sequence of solutions $\{u^\varepsilon\}_{\varepsilon\in(0,1)}$ as $\varepsilon\to0$. Indeed, in the proof of Main Theorem 1, (Step 2) establishes this convergence along a subsequence in the senses specified in \eqref{3.1..24}--\eqref{grad0}, together with the limiting relations \eqref{0 ep} and \eqref{graph}. On the other hand, (Step 3) guarantees the uniqueness of the solution $u^0$ to the limiting problem (P)$_0$. Therefore, the convergence obtained in (Step 2) actually holds without passing to a subsequence. Namely, as $\varepsilon\to0$, the solution $u^\varepsilon$ converges to $u^0$ in the senses of \eqref{3.1..24}--\eqref{grad0}, without extracting a subsequence.
\end{remark}

\begin{remark}[$H^2$-regularity of solutions]
    By imposing additional regularity assumptions on the boundary $\Gamma$ and the initial data $u_0$, the time-decay condition $\partial_t\beta\leq 0$ as in (A2) is no longer necessary. Indeed, if $\Gamma$ is of class $C^4$ and $u_0$ belongs to the domain of the Laplacian with zero-Neumann boundary condition, then the regularity of the solution can be improved to the class $L^\infty_{\mathrm{loc}}([0,\infty);H^2(\Omega))$ by following an argument similar to that used in \cite{aiki2023class}. Due to this stronger regularity, we can obtain the strong convergence stated in \eqref{3.1..17} without using the time-decay property of $\beta$. Consequently, in this case, the conclusion of Main Theorem 1 remains valid without imposing the time-decay condition on $\beta$.
\end{remark}
\section{Proofs of Main Theorem 2--4}

\subsection{Proof of Main Theorem 2}
Before the proof of Main Theorem 2, we prove the following lemma.

\begin{lemma}\label{lem 4-1}(Energy inequality)
  Let us assume the conditions (A1)--(A6). For any $\varepsilon \in [0,1)$, let $u^\varepsilon$ be the unique solution to the problem (P)$_\varepsilon$. Then, the following energy inequality holds:
\begin{gather}
  \frac{1}{2} \int_s^t \int_\Omega \alpha(\sigma) | \partial_t u^\varepsilon(\sigma) |^2\,dx d\sigma + \frac{1}{2} \int_s^t \int_\Omega \beta(\sigma) | \nabla \partial_t u^\varepsilon(\sigma) |^2\,dx d\sigma + \mathcal F_\varepsilon (u^\varepsilon(t))\\[1ex]
  \leq \frac{1}{\delta_\alpha} \int_s^t \int_\Omega | f(\sigma) - f_\infty |^2\,dx d\sigma + \mathcal F_\varepsilon(u^\varepsilon(s)), \text{ for any } s,t \in [0,\infty) \text{ with } s\leq t.\label{4.1.1}\noeqref{4.1.1}
\end{gather}
Moreover, the following properties hold:
\begin{itemize}
  \item[$\sharp$1)] $ \partial_t u^\varepsilon \in \mathscr H $;
  \item[$\sharp$2)] $ \sqrt{\beta} \nabla \partial_t u^\varepsilon \in [\mathscr H]^N $;
  \item[$\sharp$3)] $ u^\varepsilon \in L^\infty(0,\infty; W^{1,1}(\Omega)) \cap L^\infty(Q)$.
\end{itemize}
\end{lemma}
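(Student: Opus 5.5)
The plan is to derive the energy inequality first for $\varepsilon \in (0,1)$ by testing the variational identity with $\partial_t u^\varepsilon$, and then to reach $\varepsilon = 0$ through Remark \ref{eps-dep} and lower semicontinuity. Fix $\varepsilon\in(0,1)$. By Main Theorem 1 and Lemma \ref{lem 3-3}, $u^\varepsilon\in W^{1,2}_\loc([0,\infty);V)$ and $|u^\varepsilon|\le M$ a.e. in $Q$, so $g(u^\varepsilon)=g(\mathcal T_M u^\varepsilon)$ is bounded. Taking $\varphi:=\partial_t u^\varepsilon(\sigma)$ in (S1)$_\varepsilon$ gives
\begin{gather*}
\int_\Omega\alpha|\partial_t u^\varepsilon|^2dx+\int_\Omega\beta|\nabla\partial_t u^\varepsilon|^2dx+\frac{d}{d\sigma}\Bigl(\int_\Omega\gamma_\varepsilon(\nabla u^\varepsilon)\,dx+\int_\Omega G(u^\varepsilon)\,dx\Bigr)=(f,\partial_t u^\varepsilon)_H .
\end{gather*}
The chain rules here are justified because $\gamma_\varepsilon$ is $C^1$ with bounded gradient and $\nabla u^\varepsilon\in W^{1,2}_\loc([0,\infty);[H]^N)$, while $G\in C^1$ and $u^\varepsilon$ is bounded. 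Since $f_\infty$ does not depend on time, I then write $(f,\partial_t u^\varepsilon)_H=(f-f_\infty,\partial_t u^\varepsilon)_H+\frac{d}{d\sigma}(f_\infty,u^\varepsilon)_H$. This moves the $f_\infty$-term into $\mathcal F_\varepsilon$; note that $\int_\Omega\gamma_\varepsilon(Du)=\int_\Omega\gamma_\varepsilon(\nabla u)\,dx$ for $u\in V$.

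Next, Young's inequality gives $(f-f_\infty,\partial_t u^\varepsilon)_H\le\frac1{\delta_\alpha}|f-f_\infty|_H^2+\frac{1}{4}\delta_\alpha|\partial_t u^\varepsilon|_H^2$, which is in fact stronger than needed. Integrating over $(s,t)$ then yields \eqref{4.1.1} for $\varepsilon>0$.

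For $\varepsilon=0$, I use Remark \ref{eps-dep}: $u^{\varepsilon}\to u^0$ in $C([0,T];H)$, weakly in $W^{1,2}(0,T;V)$, with $\nabla u^\varepsilon\to\nabla u^0$ in $[\mathscr H_T]^N$. The main obstacle is passing to the limit in the \emph{pointwise} terms $\mathcal F_\varepsilon(u^\varepsilon(t))$ and $\mathcal F_\varepsilon(u^\varepsilon(s))$.
\begin{itemize}
\item On the left, I use $\gamma_0\le\gamma_\varepsilon$ together with the weak lower semicontinuity of $\int_\Omega|\nabla\cdot|$ in $V$. The latter needs $u^\varepsilon(t)\to u^0(t)$ weakly in $V$ for every $t$, which follows from the $C([0,T];H)$ convergence and the uniform $L^\infty(0,T;V)$ bound. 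The $G$ and $f_\infty$ terms converge by dominated convergence, using the uniform bound $M$.
\item On the right, I need $\int_\Omega\gamma_\varepsilon(\nabla u^\varepsilon(s))\,dx\to\int_\Omega|\nabla u^0(s)|\,dx$. This requires strong convergence of $\nabla u^\varepsilon(s)$ in $[L^1(\Omega)]^N$ for each $s$, and I expect this to be the hardest step. My first attempt is to repeat the argument leading to \eqref{kakuten2} (Fact($\ast$) applied to $\int_0^s(\beta\nabla\partial_t u,\nabla u)$), which gives convergence for every $s$. If that fails, I will prove the inequality for a.e. $s$ (where $L^2$-in-time strong convergence gives convergence along a subsequence) and extend it to all $s$ using continuity of $\sigma\mapsto\mathcal F_0(u^0(\sigma))$, since $u^0\in C([0,T];V)$. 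The quadratic dissipation terms pass to the limit by weak lower semicontinuity in $\mathscr H_T$.
\end{itemize}

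Finally, the properties $\sharp$1)--$\sharp$3) follow by taking $s=0$ and letting $t\to\infty$. The right-hand side is bounded by $\frac1{\delta_\alpha}|f-f_\infty|_{\mathscr H}^2+\mathcal F_\varepsilon(u_0)<\infty$, using (A6). On the left, $\mathcal F_\varepsilon(u^\varepsilon(t))\ge\int_\Omega|\nabla u^\varepsilon(t)|\,dx-|f_\infty|_{L^1(\Omega)}M$, because $G\ge0$ and $|u^\varepsilon|\le M$. This gives $\sharp$1) and $\sharp$2) via $\alpha\ge\delta_\alpha$, and gives a uniform $W^{1,1}$ bound. Combined with $|u^\varepsilon|\le M$, this yields $\sharp$3).
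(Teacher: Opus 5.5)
Your argument is correct, but for $\varepsilon=0$ it takes a detour that the paper avoids. The paper tests (S1)$_\varepsilon$ with $\varphi=\partial_t u^\varepsilon(t)$ directly for every $\varepsilon\in[0,1)$, including $\varepsilon=0$. It then identifies $(\bm\omega^*_\varepsilon(t),\nabla\partial_t u^\varepsilon(t))_{[H]^N}$ with $\frac{d}{dt}\Phi_\varepsilon(\nabla u^\varepsilon(t))$. For $\varepsilon=0$ this is the standard chain rule for a convex functional along an absolutely continuous curve. The ingredients are:
\begin{itemize}
\item $\Phi_0$ is convex and Lipschitz on $[H]^N$;
\item $\nabla u^0\in W^{1,2}_\loc([0,\infty);[H]^N)$;
\item $\bm\omega^*_0(t)\in\partial\Phi_0(\nabla u^0(t))$ for a.e.\ $t$, by Example \ref{ex 1}.
\end{itemize}
At every $t$ where both $\sigma\mapsto\Phi_0(\nabla u^0(\sigma))$ and $\nabla u^0$ are differentiable, apply the subgradient inequality to forward and backward difference quotients; this gives the identity with equality. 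The same convexity argument is what justifies your chain rule for $\varepsilon>0$, so the limit $\varepsilon\to0$ is not needed.

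Your route has the advantage of never touching the nonsmooth chain rule. The cost is that it relies on the whole $\varepsilon\to0$ convergence machinery of Remark \ref{eps-dep}, including \eqref{grad0}, whose proof the paper only sketches. It also needs extra pointwise-in-time control of $\nabla u^\varepsilon(s)$.

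Both of your options for that step are sound.
\begin{itemize}
\item Running the Fact($\ast$) argument behind \eqref{kakuten2} with $\psi=u^{\varepsilon_m}-u^0$ and the subgradient inequality for $\gamma_{\varepsilon_m}$ gives strong convergence of $\nabla u^{\varepsilon_m}(s)$ in $[H]^N$ for every $s$.
\item The fallback also works: prove the inequality for a.e.\ $s$, then extend using continuity of $\sigma\mapsto\mathcal F_0(u^0(\sigma))$. That continuity holds because $u^0\in C([0,T];V)$ and $|u^0|\le M$. The case $s=0$ is covered automatically since $u^\varepsilon(0)=u_0$.
\end{itemize}

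Your derivation of $\sharp$1)--$\sharp$3) from the upper bound and the lower bound $\mathcal F_\varepsilon(u^\varepsilon(t))\ge -M|f_\infty|_{L^1(\Omega)}$ matches the paper's.
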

\begin{proof}
    By taking $\varphi \coloneqq \partial_t u^\varepsilon(t)$ as a test function in (S1)$_\varepsilon$, we have

\begin{gather}
  \int_\Omega \alpha(t) | \partial_t u^\varepsilon(t) |^2\,dx + \int_\Omega \beta(t) | \nabla \partial_t u^\varepsilon(t) |^2\,dx + \int_\Omega \bm{\omega}^*_\varepsilon(t) \cdot \nabla \partial_t u^\varepsilon(t)\,dx + \frac{d}{dt} \int_\Omega G(u^\varepsilon(t))\,dx\\[1ex]
  = \int_\Omega \bigl( f(t) - f_\infty \bigr) \partial_t u^\varepsilon(t) \,dx + \frac{d}{dt} \int_\Omega f_\infty u^\varepsilon(t)\,dx, \forae t \in (0,\infty).\label{4.1.2}\noeqref{4.1.2}
\end{gather}
    The energy inequality \eqref{4.1.1} immediately follows from \eqref{4.1.2} by taking $s,t \in [0,\infty)$ with $s \leq t$ and integrating over $[s,t]$.

    Additionally, in view of the $L^\infty$-bound for $u^\varepsilon$ as in \eqref{Linfty}, we have
\begin{gather}
    \mathcal F_\varepsilon (u^\varepsilon(t)) \geq - \int_\Omega f_\infty u^\varepsilon(t)\,dx \geq - M \norm{ f_\infty }_{L^1(\Omega)} > -\infty, \forae t \in (0,\infty).\hspace{2em}\label{energyb}\noeqref{energyb}
\end{gather}
    The properties $\sharp$1)--$\sharp$3) are obtained as a consequence of the upper bound on $\mathcal F_\varepsilon$ derived from \eqref{4.1.1} and the lower bound on $\mathcal F_\varepsilon$ given in \eqref{energyb}.

    Thus, the proof of Lemma \ref{lem 4-1} is complete.
\end{proof}

\begin{proof}[Proof of Main Theorem 2]
  Let us divide the proof into the following two cases:

\begin{description}
    \item[\textmd{(Case I)}] There exists a sequence of times $ \{ \underline{t}^\varepsilon_n \}_{n \in \mathbb N} \subset (0,\infty)$ such that $\underline{t}^\varepsilon_n \uparrow \infty$ and $\bigl\{ \bigl| \sqrt{\beta(\underline{t}^\varepsilon_n)} \nabla u^\varepsilon(\underline{t}^\varepsilon_n) \bigr|_{[H]^N} \bigr\}_{n \in \mathbb N}$ is bounded;
    \item[\textmd{(Case I\hspace{-1pt}I)}] $\bigl\{ \bigl| \sqrt{\beta(t)} \nabla u^\varepsilon(t) \bigr|_{[H]^N} \bigr\}_{t \geq0}$ is unbounded.
    \end{description}
Note that these two cases cover all possibilities.
\medskip\\

    \noindent\underline{\textit{The proof in (Case I).}}\ \ From $\sharp$3) in Lemma \ref{lem 4-1}, it is immediately seen that
\begin{itemize}
  \item[$\flat$4)] $\{ u^\varepsilon(\underline{t}^\varepsilon_n) \}_{n\in\mathbb N} \text{ is bounded in } W^{1,1}(\Omega) \cap L^\infty(\Omega).$
\end{itemize}
    Since the embedding $BV(\Omega) \cap L^\infty(\Omega) \subset H$ is compact, we can find a subsequence $\{ \underline{t}^\varepsilon_n \}_{n\in\mathbb N} \subset (0,\infty)$ (not relabeled) and a function $\underline{u}^\varepsilon_\infty \in BV(\Omega) \cap L^\infty(\Omega)$ such that
\begin{gather*}
    \underline{t}^\varepsilon_1 < \underline{t}^\varepsilon_2 < \cdots < \underline{t}^\varepsilon_n \uparrow \infty \as n \to \infty,
\end{gather*}
and
\begin{gather}
  u^\varepsilon(\underline{t}^\varepsilon_n) \to \underline{u}^\varepsilon_\infty
  \begin{cases}
      & \In H,\\[1ex]
      \weaklystar & \In L^\infty(\Omega),\\[1ex]
      \weaklystar & \In BV(\Omega),
  \end{cases}
  \ \ \as n \to \infty.\label{4.1.4}\noeqref{4.1.4}
\end{gather}
Here, we define:
\begin{gather*}
  \underline{u}^\varepsilon_n(t) \coloneqq u^\varepsilon(t + \underline{t}^\varepsilon_n),\ \underline{\alpha}^\varepsilon_n(t) \coloneqq \alpha(t + \underline{t}^\varepsilon_n),\ \underline{\beta}^\varepsilon_n(t) \coloneqq \beta(t + \underline{t}^\varepsilon_n),\ \underline{f}^\varepsilon_n(t) \coloneqq f(t + \underline{t}^\varepsilon_n) \In H,\\[1ex] \text{for any } t \in[0,1] \text{ and } n \in \mathbb N.
\end{gather*}
Then, it follows from $\sharp$1) and $\sharp$2) in Lemma \ref{lem 4-1} that
\begin{gather}
  \begin{cases}
    \partial_t \underline{u}^\varepsilon_n\ (\text{ and } \sqrt{\underline{\alpha}^\varepsilon_n}\partial_t \underline{u}^\varepsilon_n) \to 0 \In \mathscr H_1,\\[1ex]
    \sqrt{\underline{\beta}^\varepsilon_n} \nabla \partial_t \underline{u}^\varepsilon_n \to 0 \In [\mathscr H_1]^N,
   \end{cases}
\as n \to \infty.\label{4.1.5}\noeqref{4.1.5}
\end{gather}
Additionally, one can observe from $\sharp$1) and $\sharp$3) that
\begin{itemize}
        \item[$\flat$5)] $ \{ \underline{u}^\varepsilon_n \}_{n \in \mathbb N}\text{ is bounded} \In L^\infty(0,1;W^{1,1}(\Omega)) \cap L^\infty(Q_1)$;
        \item[$\flat$6)] $\{ \partial_t \underline{u}^\varepsilon_n \}_{n \in \mathbb N}\text{ is bounded} \In \mathscr H_1$.
\end{itemize}
By the Aubin-type compactness theorem, we can find a function $\underline{u}^\varepsilon \in C([0,1];H)$ such that the following convergence holds upon passing to a subsequence:
\begin{gather}
  \underline{u}^\varepsilon_n \to \underline{u}^\varepsilon \In C([0,1];H) \as n \to \infty.\label{4.1.6}\noeqref{4.1.6}
\end{gather} 
Combining $\sharp$1) with \eqref{4.1.6}, we verify that
\begin{gather}
  \underline{u}^\varepsilon (t) = \underline{u}^\varepsilon_\infty \In H, \text{ for any } t \in [0,1],\label{4.1.7}\noeqref{4.1.7}
\end{gather}
via
\begin{gather*}
  \norm{ \underline{u}^\varepsilon_n(t) - \underline{u}^\varepsilon_\infty }_H = \norm{u^\varepsilon (t + \underline{t}^\varepsilon_n) - \underline{u}^\varepsilon_\infty }_H \leq \norm{ u^\varepsilon(t + \underline{t}^\varepsilon_n) - u^\varepsilon(\underline{t}^\varepsilon_n) }_H + \norm{ u^\varepsilon(\underline{t}^\varepsilon_n) - \underline{u}^\varepsilon_\infty }_H\\[1ex]
  \leq \int_{\underline{t}^\varepsilon_n}^\infty \norm{\partial_t u^\varepsilon(\sigma)}_H\,d\sigma + \norm{ u^\varepsilon(\underline{t}^\varepsilon_n) - \underline{u}^\varepsilon_\infty }_H \to 0 \as n \to \infty, \text{ for any } t \in [0,1].
\end{gather*}

Now, we show that $\underline{u}^\varepsilon_\infty \in S^\varepsilon_\infty$, i.e., $\underline{u}^\varepsilon_\infty$ is a solution to the steady-state problem (S)$_\varepsilon$. From (S1)$_\varepsilon$, we have
\begin{gather}
  \bigl( \underline{\alpha}^\varepsilon_n \partial_t \underline{u}^\varepsilon_n, \underline{u}^\varepsilon_n - \varphi \bigr)_{\mathscr H_1} + \bigl( \underline{\beta}^\varepsilon_n \nabla \partial_t \underline{u}^\varepsilon_n, \nabla( \underline{u}^\varepsilon_n - \varphi ) \bigr)_{[\mathscr H_1]^N}\\[1ex]
  + \int_0^1 \int_\Omega \gamma_\varepsilon ( \nabla \underline{u}^\varepsilon_n (t) )\,dxdt - \int_0^1 \int_\Omega \gamma_\varepsilon ( \nabla \varphi )\,dxdt \leq \bigl( \underline{f}^\varepsilon_n - g(\underline{u}^\varepsilon_n), \underline{u}^\varepsilon_n - \varphi \bigr)_{\mathscr H_1},\\[1ex]
  \text{ for any } \varphi \in V \text{ and } n \in \mathbb N.\label{4.1.8}\noeqref{4.1.8}
\end{gather}
Here, since the assumption (A6) implies
\begin{gather}
    \norm{\underline{f}^\varepsilon_n - f_\infty}_{\mathscr H_1}^2 = \int_0^1 \int_\Omega | \underline{f}^\varepsilon_n (t) - f_\infty |^2\,dxdt \to 0 \as n\to\infty,\label{4.1.9}\noeqref{4.1.9}
\end{gather}
having in mind \eqref{4.1.5}--\eqref{4.1.7} and \eqref{4.1.9}, and letting $n \to \infty$ in \eqref{4.1.8}, one can derive
\begin{align}
    & 0 +\liminf_{n \to \infty} \bigl( \underline{\beta}^\varepsilon_n \nabla \partial_t \underline{u}^\varepsilon_n, \nabla( \underline{u}^\varepsilon_n - \varphi ) \bigr)_{[\mathscr H_1]^N}
    \\[1ex]
    & \qquad + \int_\Omega \gamma_\varepsilon ( D \underline{u}^\varepsilon_\infty ) - \int_\Omega \gamma_\varepsilon ( \nabla \varphi )\,dx \leq \bigl( f_\infty - g(\underline{u}^\varepsilon_\infty ), \underline{u}^\varepsilon_\infty - \varphi \bigr)_H, 
    \\
    & \text{for any } \varphi \in V.\label{4.1..10}\noeqref{4.1..10}
\end{align}
Since $V$ is dense in $BV(\Omega) \cap H$, it remains to show that the $\liminf$-term in \eqref{4.1..10} either is equal to 0 or is nonnegative. In case (I), we will prove that
\begin{gather}
    \bigl( \underline{\beta}^\varepsilon_n \nabla \partial_t \underline{u}^\varepsilon_n , \nabla( \underline{u}^\varepsilon_n - \varphi ) \bigr)_{ [\mathscr H_1]^N } \to 0 \as n \to \infty.\label{cc}\noeqref{cc}
\end{gather}
To this end, it suffices to show the following boundedness:
\begin{itemize}
 \item[$\flat$7)] $\bigl\{ \sqrt{ \underline{\beta}^\varepsilon_n } \nabla \underline{u}^\varepsilon_n \bigr\}_{n \in \mathbb N} \text{ is bounded in } [ \mathscr H_1 ]^N$.
\end{itemize}
Using \eqref{4.1.5}--\eqref{4.1.9} together with the boundedness of the convergent sequences, we see that
\begin{gather}
  \int_0^t \bigl( \underline{\beta}^\varepsilon_n (\sigma) \nabla \partial_t \underline{u}^\varepsilon_n (\sigma), \nabla \underline{u}^\varepsilon_n (\sigma) \bigr)_{[H]^N}\,d\sigma \leq \widetilde{C}_2, \text{ for any } t \in [0,1] \text{ and } n \in \mathbb N,
\end{gather}
where
\begin{gather*}
  \widetilde{C}_2 \coloneqq \sup_{n \in \mathbb N} \left\{ \bigl( \underline{\alpha}^\varepsilon_n \partial_t \underline{u}^\varepsilon_n + g(\underline{u}^\varepsilon_n) + \underline{f}^\varepsilon_n, \underline{u}^\varepsilon_n - \varphi \bigr)_{\mathscr H_1} + \bigl( \underline{\beta}^\varepsilon_n \nabla \partial_t \underline{u}^\varepsilon_n, \nabla \varphi \bigr)_{[\mathscr H_1]^N} \right\} + \norm{ \gamma_\varepsilon (\nabla \varphi) }_{L^1(\Omega)}.
\end{gather*}
On the other hand, the decay condition on $\beta$ in (A2) allows us to derive the following:
\begin{align*}
  &\int_0^t \bigl( \underline{\beta}^\varepsilon_n (\sigma) \nabla \partial_t \underline{u}^\varepsilon_n (\sigma), \nabla \underline{u}^\varepsilon_n (\sigma) \bigr)_{[H]^N}\\[1ex]
  & = \frac{1}{2} \norm{ \sqrt{ \underline{\beta}^\varepsilon_n (t) } \nabla \underline{u}^\varepsilon_n (t) }_{[H]^N}^2 - \frac{1}{2} \norm{ \sqrt{ \underline{\beta}^\varepsilon_n (0) } \nabla \underline{u}^\varepsilon_n (0) }_{[H]^N}^2 - \frac{1}{2} \int_0^t \partial_t \underline{\beta}^\varepsilon_n (\sigma) | \nabla \underline{u}^\varepsilon_n (\sigma) |^2\,dxd\sigma\\[1ex]
  & \geq \frac{1}{2} \norm{ \sqrt{ \underline{\beta}^\varepsilon_n (t) } \nabla \underline{u}^\varepsilon_n (t) }_{[H]^N}^2 - \frac{1}{2} \norm{ \sqrt{ \beta (\underline{t}^\varepsilon_n) } \nabla u^\varepsilon (\underline{t}^\varepsilon_n) }_{[H]^N}^2, \text{ for any } t\in[0,1] \text{ and } n \in \mathbb N,
\end{align*}
and hence
\begin{gather*}
  \norm{ \sqrt{ \underline{\beta}^\varepsilon_n (t) } \nabla \underline{u}^\varepsilon_n (t) }_{[H]^N}^2 \leq 2 \widetilde{C}_2 + \norm{ \sqrt{ \beta(\underline{t}^\varepsilon_n) } \nabla u^\varepsilon (\underline{t}^\varepsilon_n) }_{[H]^N}^2, \text{ for any } t \in [0,1] \text{ and } n \in \mathbb N.
\end{gather*}
This inequality and the assumption (I) imply that the boundedness as in $\flat$7) holds.

Thus, we conclude that $\underline{u}^\varepsilon_\infty \in S^\varepsilon_\infty$.
\medskip\\

\noindent\underline{\textit{The proof in (Case I\hspace{-1pt}I).}}\ \ We apply Lemma \ref{lem 5-1} with $a(t) \coloneqq |\sqrt{\beta(t)}\nabla u^\varepsilon(t)|_{[H]^N}$. Then, we can find a sequence of times $\{ \overline{t}^\varepsilon_n \}_{n\in\mathbb N} \subset (0,\infty)$ with $\overline{t}^\varepsilon_n \uparrow \infty$ such that
\begin{gather}
  \norm{ \sqrt{ \vphantom{\rule{0pt}{2ex}}\smash{\beta(\overline{t}^\varepsilon_n)} } \nabla u^\varepsilon (\overline{t}^\varepsilon_n) }_{[H]^N} \leq \norm{ \sqrt{ \vphantom{\rule{0pt}{2ex}}\smash{\beta(\overline{t}^\varepsilon_n + 1)} } \nabla u^\varepsilon (\overline{t}^\varepsilon_n + 1) }_{[H]^N}, \text{ for any }n \in \mathbb N.\label{4.1..12}\noeqref{4.1..12}
\end{gather}
Proceeding as in (Case I), we can find a subsequence $\{ \overline{ t}^\varepsilon_n  \}_{n\in\mathbb N} \subset (0,\infty)$ (not relabeled) and a function $\overline{u}^\varepsilon_\infty \in BV(\Omega) \cap L^\infty(\Omega)$ such that
\begin{gather*}
  u^\varepsilon(\overline{t}^\varepsilon_n) \to \overline{u}^\varepsilon_\infty
  \begin{cases}
      & \In H,\\[1ex]
      \weaklystar & \In L^\infty(\Omega),\\[1ex]
      \weaklystar & \In BV(\Omega),
  \end{cases}
  \ \ \as n \to \infty,
\end{gather*}
and
\begin{gather}
  0 + \liminf_{n \to \infty} \bigl( \overline{\beta}^\varepsilon_n \nabla \partial_t \overline{u}^\varepsilon_n, \nabla( \overline{u}^\varepsilon_n - \varphi ) \bigr)_{[\mathscr H_1]^N}\\[1ex]
  + \int_\Omega \gamma_\varepsilon ( D \overline{u}^\varepsilon_\infty ) - \int_\Omega \gamma_\varepsilon ( \nabla \varphi )\,dx \leq \bigl( f_\infty - g(\overline{u}^\varepsilon_\infty ), \overline{u}^\varepsilon_\infty - \varphi \bigr)_H, \text{ for any } \varphi \in V,\label{4.1..13}\noeqref{4.1..13}
\end{gather}
where
\begin{gather*}
    \overline{u}^\varepsilon_n (t) \coloneqq u^\varepsilon (t + \overline{t}^\varepsilon_n) \text{ and } \overline{\beta}^\varepsilon_n (t) \coloneqq \beta(t + \overline{t}^\varepsilon_n) \In H,\\[1ex] \text{for any } t \in[0,1] \text{ and } n \in \mathbb N.
\end{gather*}
In (Case I\hspace{-1pt}I), we establish the nonnegativity of the $\liminf$-term. The assumptions \eqref{4.1..12} and (A2) with \eqref{4.1.5} allow us to derive that
\begin{align}
  &\bigl( \overline{\beta}^\varepsilon_n \nabla \partial_t \overline{u}^\varepsilon_n , \nabla( \overline{u}^\varepsilon_n - \varphi ) \bigr)_{ [\mathscr H_1]^N }\\[1ex]
  &\hspace{1em} = \bigl( \sqrt{ \vphantom{\rule{0pt}{2ex}}\smash{\overline{\beta}^\varepsilon_n} } \nabla \partial_t \overline{u}^\varepsilon_n , \sqrt{ \vphantom{\rule{0pt}{2ex}}\smash{\overline{\beta}^\varepsilon_n} } \nabla \overline{u}^\varepsilon_n \bigr)_{ [\mathscr H_1]^N } - \bigl( \sqrt{ \vphantom{\rule{0pt}{2ex}}\smash{\overline{\beta}^\varepsilon_n} } \nabla \partial_t \overline{u}^\varepsilon_n, \sqrt{ \vphantom{\rule{0pt}{2ex}}\smash{\overline{\beta}^\varepsilon_n} } \nabla \varphi \bigr)_{ [\mathscr H_1]^N }\\[1ex]
  &\hspace{1em} \geq - \bigl( \sqrt{ \vphantom{\rule{0pt}{2ex}}\smash{\overline{\beta}^\varepsilon_n} } \nabla \partial_t \overline{u}^\varepsilon_n, \sqrt{ \vphantom{\rule{0pt}{2ex}}\smash{\overline{\beta}^\varepsilon_n} } \nabla \varphi \bigr)_{ [\mathscr H_1]^N } \to 0 \as n \to \infty,\label{4.1..14}\noeqref{4.1..14}
\end{align}
via
\begin{align*}
  &\bigl( \sqrt{ \vphantom{\rule{0pt}{2ex}}\smash{\overline{\beta}^\varepsilon_n} } \nabla \partial_t \overline{u}^\varepsilon_n , \sqrt{ \vphantom{\rule{0pt}{2ex}}\smash{\overline{\beta}^\varepsilon_n} } \nabla \overline{u}^\varepsilon_n \bigr)_{ [\mathscr H_1]^N }\\[1ex]
  &\hspace{1em} = \frac{1}{2} \int_0^1 \frac{d}{dt} \norm{ \sqrt{ \vphantom{\rule{0pt}{2ex}}\smash{ \overline{ \beta}^\varepsilon_n (t) } } \nabla \overline{u}^\varepsilon_n (t) }_{[H]^N}^2\,dt - \frac{1}{2} \int_0^1 \int_\Omega \partial_t \overline{\beta}^\varepsilon_n (t) | \nabla \overline{u}^\varepsilon_n (t) |^2\,dxdt\\[1ex]
  &\hspace{1em} \geq \frac{1}{2} \left\{ \norm{ \sqrt{ \vphantom{\rule{0pt}{2ex}} \smash{ \beta( \overline{t}^\varepsilon_n + 1 ) } } \nabla u^\varepsilon ( \overline{t}^\varepsilon_n + 1 ) }_{[H]^N}^2 - \norm{ \sqrt{ \vphantom{\rule{0pt}{2ex}} \smash{ \beta( \overline{t}^\varepsilon_n ) } } \nabla u^\varepsilon ( \overline{t}^\varepsilon_n ) }_{[H]^N}^2 \right\} \geq 0, \text{ for any }n\in\mathbb N.
\end{align*}
\eqref{4.1..13} and \eqref{4.1..14} imply that $\overline{u}^\varepsilon_\infty \in S^\varepsilon_\infty$.

Thus, combining Cases (I) and \II completes the proof of Main Theorem 2.
\end{proof}

\subsection{Proof of Main Theorem 3}
As seen in the proof of Main Theorem 2, whether $|\sqrt{\beta}(\cdot)\nabla u^\varepsilon(\cdot)|_{[H]^N}$ remains bounded in time is a key point in the large-time analysis. In this light, we first prove the following lemma. 
\begin{lemma}\label{lem 4-2}
    Under the assumptions (A1)--(A6) and (A2)$^\prime$, let $\varepsilon \in [0,1)$, and let $u^\varepsilon$ be a unique solution to the problem (P)$_\varepsilon$. Then, it holds that:
\begin{itemize}
  \item[$\sharp$4)] $ \sqrt{\beta} \nabla u^\varepsilon \in L^\infty(0,\infty;[H]^N) $.
\end{itemize}
\end{lemma}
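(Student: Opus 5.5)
The plan is to derive a differential inequality for
\begin{gather*}
Y(t) \coloneqq \norm{\sqrt{\beta(t)}\nabla u^\varepsilon(t)}_{[H]^N}^2, \quad t \geq 0,
\end{gather*}
in which the exponential decay (A2)$^\prime$ of $\beta$ produces a damping term $-CY$. The remaining terms will be controlled only by the $L^\infty$-bound $|u^\varepsilon| \leq M$ (Lemma \ref{lem 3-3} and \eqref{Linfty}) and by the dissipation $\sharp$1) of Lemma \ref{lem 4-1}. These two inputs are uniform in time, whereas $\nabla u^\varepsilon$ itself is not.

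\emph{Step 1 (time derivative of $Y$).} Since $u^\varepsilon \in W^{1,2}_\loc([0,\infty);V)$ and $\beta \in W^{1,\infty}(Q)$, the function $Y$ is absolutely continuous on every bounded interval. Moreover,
\begin{gather*}
\frac{1}{2}\frac{d}{dt}Y(t) = \bigl(\beta(t)\nabla\partial_t u^\varepsilon(t), \nabla u^\varepsilon(t)\bigr)_{[H]^N} + \frac{1}{2}\int_\Omega \partial_t\beta(t)\,|\nabla u^\varepsilon(t)|^2\,dx, \quad \text{a.e. } t>0.
\end{gather*}
Expanding $\partial_t(e^{Ct}\beta) \leq 0$ gives $\partial_t\beta \leq -C\beta$ a.e. in $Q$. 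Hence the second term on the right is at most $-\frac{C}{2}Y(t)$.

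\emph{Step 2 (testing (S1)$_\varepsilon$ by $\varphi := u^\varepsilon(t)$).} This choice yields
\begin{gather*}
\bigl(\beta\nabla\partial_t u^\varepsilon, \nabla u^\varepsilon\bigr)_{[H]^N} = \bigl(f - g(u^\varepsilon) - \alpha\partial_t u^\varepsilon, u^\varepsilon\bigr)_H - \bigl(\bm{\omega}^*_\varepsilon, \nabla u^\varepsilon\bigr)_{[H]^N}.
\end{gather*}
Since $\gamma_\varepsilon$ is convex and minimized at $0$, the subgradient inequality gives $\bm{\omega}^*_\varepsilon\cdot\nabla u^\varepsilon \geq \gamma_\varepsilon(\nabla u^\varepsilon) - \gamma_\varepsilon(0) \geq 0$ a.e. in $Q$, so the last term can be dropped. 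By $|u^\varepsilon| \leq M$ together with (A1), (A3) and (A4), we obtain
\begin{gather*}
\bigl|\bigl(f - g(u^\varepsilon) - \alpha\partial_t u^\varepsilon, u^\varepsilon\bigr)_H\bigr| \leq K_1 + K_2\norm{\partial_t u^\varepsilon(t)}_H,
\end{gather*}
where $K_1 := M\mathcal{L}^N(\Omega)\bigl(\norm{f}_{L^\infty(Q)} + \norm{g}_{L^\infty(-M,M)}\bigr)$ and $K_2 := M\norm{\alpha}_{L^\infty(Q)}\mathcal{L}^N(\Omega)^{1/2}$. Both constants are independent of $t$.

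\emph{Step 3 (Gronwall).} Combining Steps 1 and 2 gives
\begin{gather*}
\frac{d}{dt}Y + CY \leq 2K_1 + 2K_2\norm{\partial_t u^\varepsilon}_H, \quad \text{a.e. } t>0.
\end{gather*}
Multiplying by $e^{Ct}$ and integrating over $(0,t)$, we will get
\begin{gather*}
Y(t) \leq Y(0) + \frac{2K_1}{C} + 2K_2\Bigl(\int_0^t e^{-2C(t-s)}\,ds\Bigr)^{1/2}\norm{\partial_t u^\varepsilon}_{\mathscr H}.
\end{gather*}
Here the Cauchy--Schwarz inequality is applied to the convolution term. The right-hand side is finite and independent of $t$: indeed $Y(0) \leq \norm{\beta}_{L^\infty(Q)}\norm{\nabla u_0}_{[H]^N}^2$, the integral is at most $1/(2C)$, and $\partial_t u^\varepsilon \in \mathscr H$ by $\sharp$1) of Lemma \ref{lem 4-1}. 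This gives $\sharp$4).

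The only delicate point is the rigorous justification of the chain rule in Step 1 for the weighted quantity $Y$, given only $W^{1,2}_\loc(V)$-regularity in time. I would handle it by the product rule for $\sqrt{\beta}\,\nabla u^\varepsilon \in W^{1,2}_\loc([0,\infty);[H]^N)$, which is valid because $\beta$ is Lipschitz with a positive lower bound on each $Q_T$. Alternatively, one can integrate the identity directly over $(s,t)$, exactly as already done in the proof of Main Theorem 2. The argument is the same for $\varepsilon = 0$, since only the monotonicity inequality $\bm{\omega}^*_\varepsilon\cdot\nabla u^\varepsilon \geq 0$ is used.
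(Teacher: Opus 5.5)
Your proof is correct and follows the paper's argument: you test (S1)$_\varepsilon$ with $u^\varepsilon(t)$, obtain damping from $\partial_t\beta\leq -C\beta$, drop $(\bm{\omega}^*_\varepsilon,\nabla u^\varepsilon)_{[H]^N}\geq 0$, bound the rest with $|u^\varepsilon|\leq M$, and close with Gronwall. The one difference is the term $(\alpha\partial_t u^\varepsilon,u^\varepsilon)_H$: you control it through the dissipation $\sharp$1) and Cauchy--Schwarz in the convolution, whereas the paper writes it as $\frac{1}{2}\frac{d}{dt}\norm{\sqrt{\alpha}u^\varepsilon}_H^2$ minus a $\partial_t\alpha$-term, includes $\norm{\sqrt{\alpha}u^\varepsilon}_H^2$ in the Gronwall quantity, and uses $\partial_t\alpha\in L^2(0,\infty;L^1(\Omega))$ from (A1), so its version needs only the $L^\infty$-bound and not $\sharp$1).
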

\begin{proof}
  Let us take $\varphi \coloneqq u^\varepsilon(t)$ as a test function in (S1)$_\varepsilon$. Then, we have
\begin{gather}
      \bigl( \alpha(t) \partial_t u^\varepsilon(t) + g( u^\varepsilon(t) ), u^\varepsilon(t) \bigr)_H + \bigl( \bm{\omega}^*_\varepsilon(t) + \beta(t) \nabla \partial_t u^\varepsilon(t), \nabla u^\varepsilon(t)  \bigr)_{[H]^N}\\[1ex]
      = \bigl( f(t), u^\varepsilon(t) \bigr)_H, \text{ for} \ae t \in (0,\infty).\label{4.2.1}\noeqref{4.2.1}
\end{gather}
        Here, from $\sharp$3) in Lemma \ref{lem 4-1}, the monotonicity of $\partial \gamma_\varepsilon$, and the exponential decay condition on $\beta$ in (A2)$^\prime$, we derive the estimate
\begin{gather}
  \frac{d}{dt} \left( \norm{ \sqrt{ \alpha(t) } u^\varepsilon(t) }_H^2 + \norm{ \sqrt{\beta(t)} \nabla u^\varepsilon(t) }_{[H]^N}^2 \right) + C \norm{ \sqrt{ \beta(t) } \nabla u^\varepsilon(t) }_{[H]^N}^2\\[1ex]
  \leq \widetilde{C}_3 ( \norm{ \partial_t \alpha(t) }_{L^1(\Omega)} + 1 ) ( \norm{u^\varepsilon}_{L^\infty(Q)}^2 + \norm{f}_{L^\infty(Q)}^2 ), \forae t \in (0,\infty),\label{4.2.2}\noeqref{4.2.2}
\end{gather}
via
\begin{align*}
  \bigl( \alpha(t) \partial_t u^\varepsilon(t), u^\varepsilon(t) \bigr)_H &= \frac{1}{2} \frac{d}{dt} \norm{ \sqrt{ \alpha(t) } u^\varepsilon(t) }_H^2 - \frac{1}{2} \int_\Omega \partial_t \alpha(t) | u^\varepsilon (t) |^2\,dx\\[1ex]
  & \geq \frac{1}{2} \frac{d}{dt} \norm{ \sqrt{ \alpha(t) } u^\varepsilon(t) }_H^2 - \frac{1}{2} \norm{ u^\varepsilon }_{L^\infty(Q)}^2 \norm{ \partial_t \alpha(t) }_{L^1(\Omega)},
\end{align*}
\begin{align*}
  \bigl( \beta(t) \nabla \partial_t u^\varepsilon(t), \nabla u^\varepsilon(t) \bigr)_{[H]^N} &= \frac{1}{2} \frac{d}{dt} \norm{ \sqrt{\beta(t)} \nabla u^\varepsilon (t) }_{[H]^N}^2 - \frac{1}{2} \int_\Omega \partial_t \beta(t) | \nabla u^\varepsilon (t) |^2\,dx\\[1ex]
  &\geq \frac{1}{2} \frac{d}{dt} \norm{ \sqrt{\beta(t)} \nabla u^\varepsilon (t) }_{[H]^N}^2 + \frac{1}{2} C \norm{ \sqrt{\beta(t)} \nabla u^\varepsilon (t) }_{[H]^N}^2,
\end{align*}
\begin{gather*}
  \bigl( \bm{\omega}^*_\varepsilon (t), \nabla u^\varepsilon (t) \bigr)_{[H]^N} \geq 0,
\end{gather*}
\begin{gather*}
  - \bigl( g(u^\varepsilon (t)), u^\varepsilon(t) \bigr)_H \leq \left( \norm{g^\prime}_{L^\infty(-M,M)} + \frac{1}{2}|g(0)|^2 \right) \norm{ u^\varepsilon }_{L^\infty(Q)}^2 \mathcal L^N(\Omega),
\end{gather*}
and
\begin{gather*}
  \bigl( f(t), u^\varepsilon(t) \bigr)_H \leq \frac{1}{2} ( \norm{u^\varepsilon}_{L^\infty(Q)}^2 + \norm{f}_{L^\infty(Q)}^2 ) \mathcal L^N (\Omega), \forae t \in (0,\infty),
\end{gather*}
where
\begin{gather*}
  \widetilde{C}_3 \coloneqq 2 ( 2 \norm{g^\prime}_{L^\infty(-M,M)} + |g(0)|^2 + 1 ) ( \mathcal L^N (\Omega) + 1 ). 
\end{gather*}
Additionally, a direct calculation gives the following estimate:
\begin{gather}
  C \norm{ \sqrt{\alpha(t)} u^\varepsilon (t) }_H^2 \leq C \norm{\alpha}_{L^\infty(Q)} \norm{u^\varepsilon}_{L^\infty(Q)}^2 \mathcal L^N (\Omega), \ae t \in (0,T).\label{4.2.3}\noeqref{4.2.3}
\end{gather}
        Adding \eqref{4.2.2} and \eqref{4.2.3} yields
\begin{gather}
  \frac{d}{dt} Y^\varepsilon(t) + C Y^\varepsilon(t) \leq \widetilde{C}_4 ( \norm{ \partial_t \alpha(t) }_{L^1(\Omega)} + 1 ) ( \norm{u^\varepsilon}_{L^\infty(Q)}^2 + \norm{f}_{L^\infty(Q)}^2 ),\\
 \forae t \in (0,\infty),\label{GronY}\noeqref{GronY}
\end{gather}
where
\begin{gather*}
  Y^\varepsilon(t) \coloneqq \norm{ \sqrt{ \alpha(t) } u^\varepsilon (t) }_H^2 + \norm{ \sqrt{\beta(t)} \nabla u^\varepsilon (t) }_{[H]^N}^2, \text{ for any } t \in [0,\infty),
\end{gather*}
and
\begin{gather*}
  \widetilde{C}_4 \coloneqq \widetilde{C}_3 + C \norm{\alpha}_{L^\infty(Q)} \mathcal L^N (\Omega).
\end{gather*}

    Now, applying Gronwall's lemma to \eqref{GronY}, we obtain that
\begin{align*}
  Y^\varepsilon&(t) \leq e^{-Ct} Y^\varepsilon (0) + \widetilde{C}_4 (\norm{u^\varepsilon}_{L^\infty(Q)}^2 + \norm{f}_{L^\infty(Q)}^2) \int_0^t ( \norm{ \partial_t \alpha(s) }_{L^1(\Omega)} + 1 ) e^{C(s-t)}\,ds\\[1ex]
  &\hspace{-1.4em}\leq Y^\varepsilon(0) + \widetilde{C}_4 (\norm{u^\varepsilon}_{L^\infty(Q)}^2 + \norm{f}_{L^\infty(Q)}^2) \int_0^t \left( \frac{1}{2} \norm{ \partial_t \alpha(s) }_{L^1(\Omega)}^2 + \frac{1}{2} e^{2C(s-t)} + e^{C(s-t)} \right)\,ds\\[1ex]
  &\hspace{-1.4em}\leq Y^\varepsilon(0) + \widetilde{C}_4 (\norm{u^\varepsilon}_{L^\infty(Q)}^2 + \norm{f}_{L^\infty(Q)}^2) \left( \frac{1}{2} \norm{ \partial_t \alpha }_{L^2(0,\infty;L^1(\Omega))}^2 + \frac{5}{4C} \right) \hspace{-0.2em}<\hspace{-0.2em} \infty, \text{ for any }t \in [0,\infty).
\end{align*}

  This implies that $Y^\varepsilon \in L^\infty (0,\infty)$, and hence Lemma \ref{lem 4-2} follows.
\end{proof}

\begin{proof}[Proof of Main Theorem 3]
    Let us take any $\omega$-limit point $ u_\infty^\varepsilon \in \omega(u) $. Then, from Definition \ref{def 4} and $\sharp$3) in Lemma \ref{lem 4-1}, 
    we can find a sequence of times $\{ t^\varepsilon_n \}_{n \in \mathbb N} \subset (0,\infty)$ with $t^\varepsilon_n \uparrow \infty$ such that for the unique solution $u^\varepsilon$ to the problem (P)$_\varepsilon$,
\begin{gather}
    u^\varepsilon (t^\varepsilon_n) \to u^\varepsilon_\infty
    \begin{cases}
      & \In H,\\[1ex]
      \weaklystar & \In L^\infty(\Omega),\\[1ex]
      \weaklystar & \In BV(\Omega),
    \end{cases}
    \ \ \as n\to\infty.\label{4.2.4}\noeqref{4.2.4}
  \end{gather}
Here, we define:
\begin{gather*}
  u^\varepsilon_n (t) \coloneqq u^\varepsilon (t + t^\varepsilon_n),\ \alpha^\varepsilon_n (t) \coloneqq \alpha(t + t^\varepsilon_n),\ \beta^\varepsilon_n (t) \coloneqq \beta(t + t^\varepsilon_n),\ f^\varepsilon_n (t) \coloneqq f(t + t^\varepsilon_n) \In H,\\[1ex]
\text{ for any } t \in[0,1] \text{ and }  n \in \mathbb N.
\end{gather*}
Then, it follows from $\sharp$1) and $\sharp$2) in Lemma \ref{lem 4-1} that
\begin{gather}
  \begin{cases}
    \partial_t u^\varepsilon_n \ (\text{ and } \sqrt{ \vphantom{\rule{0pt}{2ex}} \smash{ \alpha^\varepsilon_n} } \partial_t u^\varepsilon_n) \to 0 \In \mathscr H_1,\\[1ex]
    \sqrt{ \vphantom{\rule{0pt}{2.5ex}} \smash{ \beta^\varepsilon_n} }  \nabla \partial_t u^\varepsilon_n \to 0 \In [\mathscr H_1]^N,
   \end{cases}
\as n \to \infty.\label{4.2.5}\noeqref{4.2.5}
\end{gather}
Also, the Aubin-type compactness theorem with $\sharp$1) and $\sharp$3) enables us to find a subsequence $\{t^\varepsilon_n\}_{n \in \mathbb N} \subset (0,\infty)$ with $t^\varepsilon_n \uparrow \infty$ (not relabeled) and a function $\widetilde{u}^\varepsilon \in C([0,1];H)$ such that
\begin{gather}
  u^\varepsilon_n \to \widetilde{u}^\varepsilon \In C([0,1];H) \as n \to \infty,\label{4.2.6}\noeqref{4.2.6}
\end{gather} 
and by the same argument as in the proof of \eqref{4.1.7}, 
\begin{gather}
  \widetilde{u}^\varepsilon (t) = u^\varepsilon_\infty \In H, \text{ for any } t \in [0,1]. \label{4.2.7}\noeqref{4.2.7}
\end{gather}

    Of particular importance is the following observation. For any $\varphi \in V$, the strong convergence \eqref{4.2.5} and the boundedness $\sharp$4) in Lemma \ref{lem 4-2}, together with (A2), yield that
    \begin{align}
        \bigl( \beta^\varepsilon_n & \nabla \partial_t u^\varepsilon_n, \nabla ( u^\varepsilon_n - \varphi) \bigr)_{[\mathscr H_1]^N}
        \\
         & = \bigl( \sqrt{\beta^\varepsilon_n} \nabla \partial_t u^\varepsilon_n, \sqrt{\beta^\varepsilon_n}\nabla u^\varepsilon_n \bigr)_{[\mathscr H_1]^N} - \bigl( \sqrt{\beta^\varepsilon_n} \nabla \partial_t u^\varepsilon_n, \sqrt{\beta^\varepsilon_n}\nabla \varphi \bigr)_{[\mathscr H_1]^N}
         \\
         & 
         \to 0, \mbox{ as $ n \to \infty $.}\label{3beta}\noeqref{3beta}
    \end{align}

Now, applying \eqref{4.2.5}--\eqref{3beta}, and proceeding as in the proof of Main Theorem 2, we obtain the following:
\begin{gather*}
  \int_\Omega \gamma_\varepsilon ( D u^\varepsilon_\infty ) - \int_\Omega \gamma_\varepsilon ( \nabla \varphi )\,dx \leq \bigl( f_\infty - g(u^\varepsilon_\infty), u^\varepsilon_\infty - \varphi \bigr)_H, \text{ for any } \varphi \in V.
\end{gather*}

This implies that $u^\varepsilon_\infty \in S^\varepsilon_\infty$, and hence we finish the proof of Main Theorem 3.
\end{proof}

\subsection{Proof of Main Theorem 4}
Main Theorem 4 is established by building on the proof of Main Theorem 3. The key points are the time-rescaling and the integrability property to be established, which are given, respectively, by
\begin{gather}
  w^\varepsilon (s) \coloneqq u^\varepsilon (e^s - 1) \In H, \text{ for any } s \in [0,\infty) \text{ with } t \coloneqq e^s - 1,\label{4.3.1}\noeqref{4.3.1}\\[1ex]
  \partial_s w^\varepsilon = e^s [\partial_t u^\varepsilon](e^s -1) \in \mathscr H, \text{ for any } \varepsilon \in [0,1).\label{4.3.2}\noeqref{4.3.2}
\end{gather}

First of all, for any $\varepsilon \in [0,1)$, under the time-rescaling \eqref{4.3.1}, the problem (P)$_\varepsilon$ can be rewritten as follows:
\begin{gather*}
    \text{(PS)}_\varepsilon
    \begin{cases}
      e^{-s} \widetilde{\alpha} \partial_s w^\varepsilon - \diver \left( \partial \gamma_\varepsilon( \nabla w^\varepsilon ) + e^{-s} \widetilde{\beta} \nabla \partial_s w^\varepsilon \right) + g(w^\varepsilon) \ni \widetilde{f} \text{ in } H, \forae s \in (0,\infty), \\[1.5ex]
      \left( \partial \gamma_\varepsilon( \nabla w^\varepsilon ) + e^{-s} \widetilde{\beta} \nabla \partial_s w^\varepsilon \right) \cdot n_\Gamma \ni 0 \text{ for}\aeon \Gamma \text{ and} \ae s \in (0,\infty),\\[1.5ex]
      w^\varepsilon(0,x) = u_0(x), \forae x \in \Omega,
    \end{cases}
  \end{gather*}
where
\begin{gather}
  \widetilde{\alpha}(s) \coloneqq \alpha(e^s - 1),\ \widetilde{\beta}(s) \coloneqq \beta(e^s - 1),\ \widetilde{f}(s) \coloneqq f(e^s - 1)
    \label{4.3.3}\noeqref{4.3.3}
    \\[1ex]
    \In H, \text{ for any } s \in [0,\infty).
\end{gather}

For the problem (P)$_\varepsilon$, the energy inequality \eqref{4.1.1} directly yields $\partial_t u^\varepsilon \in \mathscr H$. In contrast, for (PS)$_\varepsilon$, the same argument gives only the weighted estimate $e^{-s/2}\partial_s w^\varepsilon \in \mathscr H$. Therefore, to verify \eqref{4.3.2}, we first consider the linearized problem associated with (PS)$_\varepsilon$ and prove the following lemma.

\begin{lemma}\label{lem 4-3}
    Assume (A1)--(A7), and let $\varepsilon \in (0,1)$. Let $u^\varepsilon$ be the unique solution to (P)$_\varepsilon$. With the time-rescaling \eqref{4.3.1} and the associated functions defined in \eqref{4.3.3}, we consider the following linear pseudo-parabolic problem:
\begin{gather*}
  \rm{(LPS)}_\varepsilon
  \begin{cases}
    e^{-s} \widetilde{\alpha} \partial_s v^\varepsilon
    + \partial_s \bigl( e^{-s} \widetilde{\alpha} \bigr)v^\varepsilon
    - \diver \left(
      \nabla^2 \gamma_\varepsilon (\nabla w^\varepsilon) \nabla v^\varepsilon
      + e^{-s}\widetilde{\beta} \nabla \partial_s v^\varepsilon
      + \partial_s \bigl( e^{-s} \widetilde{\beta} \bigr) \nabla v^\varepsilon
    \right)\\[1.5ex]
    \hspace{13em}
    + g^\prime(w^\varepsilon) v^\varepsilon
    = \partial_s \widetilde{f} \In H,
    \forae s \in (0,\infty),\\[1.5ex]
    \left(
      e^{-s}\widetilde{\beta} \nabla \partial_s v^\varepsilon
      + \partial_s \bigl( e^{-s} \widetilde{\beta} \bigr) \nabla v^\varepsilon
    \right) \cdot n_\Gamma
    = 0
    \forae \text{on } \Gamma \text{ and} \ae s \in (0,\infty),\\[1.5ex]
    v^\varepsilon (0)
    = \bigl(
      \alpha(0) I - \diver ( \beta(0) \nabla )
    \bigr)^{-1}
    \bigl(
      f(0) - g(u_0)
      + \diver ( \nabla \gamma_\varepsilon ( \nabla u_0 ) )
    \bigr)
    \in V.
  \end{cases}
\end{gather*}
Then, the following assertions hold:
\begin{itemize}
  \item[(i)] (LPS)$_\varepsilon$ admits a unique solution $ v^\varepsilon \in W^{1,2}_\loc([0,\infty);V) $;
  \item[(ii)] $\partial_s w^\varepsilon = v^\varepsilon \aein Q$.
\end{itemize}
\end{lemma}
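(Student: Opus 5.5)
Since (LPS)$_\varepsilon$ is linear in $v^\varepsilon$ and its coefficients are fixed by the already constructed solution $w^\varepsilon$ (that is, by $u^\varepsilon$), I would prove (i) and (ii) separately. For (i), fix $S>0$ and rewrite (LPS)$_\varepsilon$ as an abstract linear evolution equation in $V$. Let $A(s):V\to V^*$ be the operator associated with the bilinear form
\[
a_s(v,\varphi):=\bigl(e^{-s}\widetilde\alpha(s)v,\varphi\bigr)_H+\bigl(e^{-s}\widetilde\beta(s)\nabla v,\nabla\varphi\bigr)_{[H]^N}.
\]
This form is uniformly coercive on $[0,S]$, because $\alpha\ge\delta_\alpha$ and $\beta\ge\delta_{e^S-1}$ there. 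The equation then reads $A(s)\partial_s v^\varepsilon=-B(s)v^\varepsilon+\partial_s\widetilde f$, where
\[
\langle B(s)v,\varphi\rangle=\bigl(\partial_s(e^{-s}\widetilde\alpha)v+g'(w^\varepsilon)v,\varphi\bigr)_H+\bigl(\nabla^2\gamma_\varepsilon(\nabla w^\varepsilon)\nabla v+\partial_s(e^{-s}\widetilde\beta)\nabla v,\nabla\varphi\bigr)_{[H]^N}.
\]
The ingredients are as follows.
\begin{itemize}
\item $\nabla^2\gamma_\varepsilon$ is bounded by $1/\varepsilon$.
\item $|w^\varepsilon|\le M$ by Lemma \ref{lem 3-3}, so $g'(w^\varepsilon)\in L^\infty$.
\item $\partial_s\widetilde\beta\in L^\infty$ by (A2).
\item $\partial_s\widetilde\alpha\in L^2(0,S;L^{q^*}(\Omega))$ by (A1). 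The term $(\partial_s\widetilde\alpha\, v,\varphi)_H$ is controlled by $\|\partial_s\widetilde\alpha\|_{L^{q^*}}\|v\|_{L^{p^*}}\|\varphi\|_{L^{p^*}}$ and the embedding $V\subset L^{p^*}(\Omega)$, exactly as in Lemma \ref{lem 3-3}.
\item $\partial_s\widetilde f\in L^2(0,S;H)$ by (a-4).
\end{itemize}
Hence $B(s)\in\mathcal L(V,V^*)$ with norm in $L^2(0,S)$, and the problem becomes the ODE $\partial_sv=-A(s)^{-1}B(s)v+A(s)^{-1}\partial_s\widetilde f$ in $V$, whose right-hand side is Lipschitz in $v$ with $L^2$-in-time constant. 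A Picard iteration, or Carathéodory theory for linear ODEs in Banach spaces, gives a unique $v^\varepsilon\in W^{1,2}(0,S;V)$ with the prescribed $v^\varepsilon(0)\in V$. Uniqueness also follows from a Gronwall estimate on $\|\sqrt{e^{-s}\widetilde\alpha}\,v\|_H^2+\|\sqrt{e^{-s}\widetilde\beta}\,\nabla v\|_{[H]^N}^2$, as in Step 3 of Main Theorem 1. Since $S$ is arbitrary, (i) follows.

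For (ii), the natural route is a difference-quotient argument on the rescaled problem (PS)$_\varepsilon$. For $h>0$ set $D_hw:=(w^\varepsilon(\cdot+h)-w^\varepsilon)/h$ and subtract the variational identities (S1)$_\varepsilon$ at $s+h$ and $s$. Since $\varepsilon>0$, the map $\nabla\gamma_\varepsilon$ is $C^1$, so
\[
\frac{\nabla\gamma_\varepsilon(\nabla w^\varepsilon(s+h))-\nabla\gamma_\varepsilon(\nabla w^\varepsilon(s))}{h}=\int_0^1\nabla^2\gamma_\varepsilon\bigl(\nabla w^\varepsilon(s)+\theta h\nabla D_hw\bigr)\,d\theta\,\nabla D_hw .
\]
The $g$-term is handled in the same way through the mean value theorem. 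The coefficient terms produce discrete product-rule terms of the form $D_h(e^{-s}\widetilde\alpha)\,\partial_sw^\varepsilon(s+h)$ and the analogous $\beta$-term. Then $z_h:=D_hw-v^\varepsilon$ satisfies a linear equation of the same type as (LPS)$_\varepsilon$ with a remainder $R_h$. Testing with $\partial_sz_h$, or with $z_h$, and applying Gronwall as in (i) gives
\[
\sup_{[0,S]}\|z_h\|_V\le C\bigl(\|z_h(0)\|_V+\|R_h\|_{L^2(0,S;V^*)}\bigr).
\]

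The main obstacle I expect is showing that both $R_h\to0$ and $z_h(0)\to0$ in $V$. For $R_h$, the delicate piece is $\bigl(\int_0^1\nabla^2\gamma_\varepsilon(\cdots)d\theta-\nabla^2\gamma_\varepsilon(\nabla w^\varepsilon)\bigr)\nabla D_hw$. Here $\nabla D_hw$ is bounded in $L^2(0,S;[H]^N)$ because $w^\varepsilon\in W^{1,2}(0,S;V)$, and the bracket tends to $0$ a.e. and stays bounded. This is only a product of a bounded a.e.-convergent factor with an $L^2$-bounded factor, so I would use the fact that $\nabla D_hw\to\nabla\partial_sw^\varepsilon$ strongly in $L^2$ by continuity of translations. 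Dominated convergence then closes this term, and the remaining terms use $\partial_s\widetilde\alpha\in L^2(L^{q^*})$ in the same manner.

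The initial term $z_h(0)\to0$ requires $\partial_sw^\varepsilon$ to be continuous into $V$ near $0$ with value $v^\varepsilon(0)$. To avoid this, I would instead run the argument in integrated form: let $W(s):=u_0+\int_0^sv^\varepsilon$. Integrating (LPS)$_\varepsilon$ in time shows, via the chain rule for $\nabla\gamma_\varepsilon(\nabla W)$ and $g(W)$, that $W$ satisfies (PS)$_\varepsilon$. The choice of $v^\varepsilon(0)$ is exactly the compatibility condition making the constant of integration vanish. The uniqueness of Main Theorem 1, transported through the rescaling $t=e^s-1$, then yields $W=w^\varepsilon$, and hence $\partial_sw^\varepsilon=v^\varepsilon$. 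I would attempt this integrated version first, and keep the difference-quotient estimate only as a fallback.
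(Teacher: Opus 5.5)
Your argument for (i) is sound. It is also more direct than the paper's, which only refers back to the discretization argument of Main Theorem 1: for a linear problem with coefficients frozen at $w^\varepsilon$, inverting the uniformly coercive operator $A(s)$ and solving a linear Carath\'eodory ODE in $V$ is enough.

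The gap is in your primary argument for (ii). Integrating (LPS)$_\varepsilon$ over $(0,s)$ and using the compatibility condition for $v^\varepsilon(0)$ gives, for $W(s)=u_0+\int_0^s v^\varepsilon\,d\sigma$,
\begin{align*}
&\bigl(e^{-s}\widetilde{\alpha}\,\partial_s W(s),\varphi\bigr)_H+\bigl(e^{-s}\widetilde{\beta}\,\nabla\partial_s W(s),\nabla\varphi\bigr)_{[H]^N}\\
&\quad=\Bigl(\widetilde{f}(s)-g(u_0)-\int_0^s g'(w^\varepsilon)v^\varepsilon\,d\sigma,\varphi\Bigr)_H-\Bigl(\nabla\gamma_\varepsilon(\nabla u_0)+\int_0^s\nabla^2\gamma_\varepsilon(\nabla w^\varepsilon)\nabla v^\varepsilon\,d\sigma,\nabla\varphi\Bigr)_{[H]^N}.
\end{align*}
The coefficients $g'(w^\varepsilon)$ and $\nabla^2\gamma_\varepsilon(\nabla w^\varepsilon)$ are evaluated at $w^\varepsilon$, not at $W$. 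So the chain rule for $g(W)$ and $\nabla\gamma_\varepsilon(\nabla W)$ does not turn these integrals into $g(W(s))-g(u_0)$ and $\nabla\gamma_\varepsilon(\nabla W(s))-\nabla\gamma_\varepsilon(\nabla u_0)$; that would require $W=w^\varepsilon$, which is the claim. Hence $W$ is not shown to solve (PS)$_\varepsilon$, and appealing to the nonlinear uniqueness of Main Theorem 1 is circular.

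To repair this, apply the chain rule to the known function $w^\varepsilon\in W^{1,2}_\loc([0,\infty);V)$ instead. This is legitimate because $\nabla\gamma_\varepsilon$ is $C^1$ with $|\nabla^2\gamma_\varepsilon|\le 1/\varepsilon$, and $g$ is Lipschitz on $[-M,M]$ with $|w^\varepsilon|\le M$. Then subtract the weak form of (PS)$_\varepsilon$. The difference $\zeta:=v^\varepsilon-\partial_s w^\varepsilon$ satisfies the linear Volterra relation
\begin{align*}
&\bigl(e^{-s}\widetilde{\alpha}\,\zeta(s),\varphi\bigr)_H+\bigl(e^{-s}\widetilde{\beta}\,\nabla\zeta(s),\nabla\varphi\bigr)_{[H]^N}\\
&\quad=-\Bigl(\int_0^s g'(w^\varepsilon)\zeta\,d\sigma,\varphi\Bigr)_H-\Bigl(\int_0^s\nabla^2\gamma_\varepsilon(\nabla w^\varepsilon)\nabla\zeta\,d\sigma,\nabla\varphi\Bigr)_{[H]^N}.
\end{align*}
Taking $\varphi=\zeta(s)$ and using coercivity on $[0,S]$ yields $|\zeta(s)|_V\le K_S\int_0^s|\zeta(\sigma)|_V\,d\sigma$, so $\zeta\equiv0$ by Gronwall.

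This is the paper's route: it defines $\widetilde w^\varepsilon=u_0+\int_0^s v^\varepsilon$, integrates, and compares directly with (PS)$_\varepsilon$ for $w^\varepsilon$ rather than invoking uniqueness. However, the paper's display \eqref{4.3.4} already writes $g'(w^\varepsilon)\partial_s w^\varepsilon$ and $\nabla^2\gamma_\varepsilon(\nabla w^\varepsilon)\nabla\partial_s w^\varepsilon$ where (LPS)$_\varepsilon$ has $v^\varepsilon$. The Volterra--Gronwall step above is what closes either version.

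Your difference-quotient fallback could also be made to work. It would additionally need $\partial_s w^\varepsilon\in C([0,S];V)$ with $\partial_s w^\varepsilon(0)=v^\varepsilon(0)$, obtained by inverting $A(s)$ in (PS)$_\varepsilon$. That extra work is unnecessary once the integrated argument is repaired.
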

\begin{proof}
  Let $\varepsilon \in (0,1)$ be fixed. Then, the assertion (i) follows directly from the argument used in the proof of Main Theorem 1. 

    We now verify the assertion (ii). Let $v^\varepsilon \in W^{1,2}_\loc([0,\infty);V)$ be the unique solution to (LPS)$_\varepsilon$. Then, for any $S \in (0,\infty)$, $s \in [0, S]$, and $\varphi \in V$, integration over $(0,s)$ gives
\begin{align}
  &\int_0^s \bigl( e^{-\sigma} \widetilde{\alpha}(\sigma) \partial_s v^\varepsilon(\sigma) + \partial_s \bigl( e^{-\sigma} \widetilde{\alpha}(\sigma) \bigr) v^\varepsilon(\sigma), \varphi \bigr)_H\, d\sigma \\[1ex]
  &\hspace{5em}+ \int_0^s \bigl( e^{-\sigma} \widetilde{\beta}(\sigma)  \nabla \partial_s v^\varepsilon(\sigma) + \partial_s \bigl( e^{-\sigma} \widetilde{\beta}(\sigma) \bigr) \nabla v^\varepsilon(\sigma), \nabla \varphi \bigr)_{[H]^N}\, d\sigma\\[1ex]
  & = \int_0^s \bigl( \partial_s \widetilde{f}(\sigma) - g^\prime(w^\varepsilon(\sigma)) \partial_s w^\varepsilon(\sigma), \varphi \bigr)_H\,d\sigma\\[1ex]
  &\hspace{5em} - \int_0^s \bigl( \nabla^2 \gamma_\varepsilon (\nabla w^\varepsilon(\sigma)) \nabla \partial_s w^\varepsilon(\sigma), \nabla \varphi \bigr)_{[H]^N}\,d\sigma, \text{ for any } s \in [0,S].\label{4.3.4}\noeqref{4.3.4}
\end{align}
    Also, the initial condition of (LPS)$_\varepsilon$ implies that:
\begin{gather}
  \hspace{-5em}\bigl( \alpha(0) v^\varepsilon(0), \varphi \bigr)_H + \big( \beta(0) \nabla v^\varepsilon(0), \nabla \varphi \bigr)_{[H]^N} 
    \\
    \hspace{5em}= \bigl( f(0) - g(u_0), \varphi \bigr)_H - \bigl( \nabla \gamma_\varepsilon (u_0), \nabla \varphi \bigr)_{[H]^N}.
\label{4.3.5}\noeqref{4.3.5}
\end{gather}
  Here, we define:
  \begin{gather*}
    \widetilde{w}^\varepsilon(s) \coloneqq u_0 + \int_0^s v^\varepsilon(\sigma)\,d\sigma, \text{ for any } s \in [0,S].
  \end{gather*}
    Then, combining \eqref{4.3.4} and \eqref{4.3.5}, we arrive at
\begin{align}
  &\bigl( e^{-s} \widetilde{\alpha}(s) \partial_s \widetilde{w}^\varepsilon(s), \varphi \bigr)_H + \bigl( e^{-s} \widetilde{\beta}(s) \nabla \partial_s \widetilde{w}^\varepsilon(s), \nabla \varphi \bigr)_{[H]^N}\\[1ex]
&\hspace{2em}= \bigl( \widetilde{f}(s) - g(w^\varepsilon(s)), \varphi \bigr)_H - \bigl( \nabla \gamma_\varepsilon ( \nabla w^\varepsilon(s) ), \nabla \varphi \bigr)_{[H]^N}, \forae s \in (0,S),\hspace{3em}\label{4.3.6}\noeqref{4.3.6}
\end{align}
    via the identities
\begin{gather*}
  \int_0^s \bigl( e^{-\sigma} \widetilde{\alpha}(\sigma) \partial_s v^\varepsilon(\sigma) + \partial_s \bigl( e^{-\sigma} \widetilde{\alpha}(\sigma) \bigr) v^\varepsilon(\sigma), \varphi \bigr)_H\, d\sigma = \bigl( e^{-s} \widetilde{\alpha}(s) v^\varepsilon(s) - \alpha(0)v^\varepsilon(0), \varphi \bigr)_H,
\end{gather*}
\begin{align*}
  &\int_0^s \bigl( e^{-\sigma} \widetilde{\beta}(\sigma) \nabla \partial_s v^\varepsilon (\sigma) + \partial_s \bigl( e^{-\sigma} \widetilde{\beta}(\sigma) \bigr) \nabla v^\varepsilon (\sigma), \nabla \varphi \bigr)_{[H]^N}\, d\sigma \\[1ex]
  &\hspace{17em}= \bigl( e^{-s} \widetilde{\beta}(s) \nabla v^\varepsilon(s) - \beta(0) \nabla v^\varepsilon (0), \nabla \varphi \bigr)_{[H]^N},
\end{align*}
\begin{gather*}
  \int_0^s \bigl( \partial_s \widetilde{f}(\sigma) - g^\prime(w^\varepsilon(\sigma)) \partial_s w^\varepsilon(\sigma), \varphi \bigr)_H\, d\sigma = \bigl( \widetilde{f}(s) - g(w^\varepsilon(s)) - f(0) + g(u_0), \varphi \bigr)_H,
\end{gather*}
and
\begin{gather*}
  \int_0^s \bigl( \nabla^2 \gamma_\varepsilon (\nabla w^\varepsilon(\sigma)) \nabla \partial_s w^\varepsilon(\sigma), \nabla \varphi \bigr)_{[H]^N}\,d\sigma = \bigl( \nabla \gamma_\varepsilon ( \nabla w^\varepsilon(s) ) - \nabla \gamma_\varepsilon ( \nabla u_0 ), \nabla \varphi \bigr)_{[H]^N},\\
 \text{ for any } s \in [0,S].
\end{gather*}

    On the other hand, the variational formulation of (PS)$_\varepsilon$ gives
\begin{align}
  &\bigl( e^{-s} \widetilde{\alpha}(s) \partial_s w^\varepsilon(s), \varphi \bigr)_H + \bigl( e^{-s} \widetilde{\beta}(s) \nabla \partial_s w^\varepsilon(s), \nabla \varphi \bigr)_{[H]^N}\\[1ex]
&\hspace{2em}= \bigl( \widetilde{f}(s) - g(w^\varepsilon(s)), \varphi \bigr)_H - \bigl( \nabla \gamma_\varepsilon ( \nabla w^\varepsilon(s) ), \nabla \varphi \bigr)_{[H]^N}, \forae s \in (0,S).\hspace{3em}\label{4.3.7}\noeqref{4.3.7}
\end{align}
Subtracting \eqref{4.3.7} from \eqref{4.3.6} and taking
$\varphi \coloneqq \partial_s (\widetilde{w}^\varepsilon-w^\varepsilon)(s)$, we obtain
\begin{align*}
  0 &= \int_\Omega e^{-s} \widetilde{\alpha}(s) | \partial_s( \widetilde{w}^\varepsilon - w^\varepsilon )(s) |^2\,dx + \int_\Omega e^{-s} \widetilde{\beta}(s) | \nabla \partial_s( \widetilde{w}^\varepsilon - w^\varepsilon )(s) |^2\,dx \\[1ex]
  &\geq \delta_\alpha e^{-s} \norm{ \partial_s (\widetilde{w}^\varepsilon - w^\varepsilon)(s) }_H^2, \forae s \in (0,S),
\end{align*}
which implies
\begin{gather*}
    \partial_s (\widetilde{w}^\varepsilon-w^\varepsilon)(s) = 0
  \mbox{ in } H, \ae s \in (0, S).
\end{gather*}
With $\widetilde{w}^\varepsilon(0)=w^\varepsilon(0)=u_0$ in mind, one can see that
\begin{gather*}
  \widetilde{w}^\varepsilon(s)=w^\varepsilon(s)
  \quad \text{for any } s\in[0,S],
    \\
    \mbox{and therefore, } 
    \partial_s w^\varepsilon(s)
  = \partial_s \widetilde{w}^\varepsilon(s)
  = v^\varepsilon(s)
  \mbox{ in } H, \ae s \in (0, S).
\end{gather*}

Since $S>0$ is arbitrary, the assertion (ii) follows, and the proof of Lemma \ref{lem 4-3} is complete.
\end{proof}

\begin{proof}[Proof of Main Theorem 4]
    Let $\varepsilon \in [0,1)$, and let $ u^\varepsilon_\infty \in \omega(u^\varepsilon) $ be the $\omega$-limit point of the unique solution $u^\varepsilon$ to (P)$_\varepsilon$. With the time-rescaling \eqref{4.3.1} and the associated functions as in  \eqref{4.3.3},  we invoke the proof of Main Theorem 3 to take a sequence $\{ t^\varepsilon_n \} \subset (0,\infty)$, with $t^\varepsilon_n \uparrow \infty$, satisfying \eqref{4.2.4}--\eqref{4.2.7}.
    
Let us set
\begin{gather*}
  s^\varepsilon_n \coloneqq \log{ (t^\varepsilon_n + 1) }, \text{ for any } n \in \mathbb N.
\end{gather*}
In view of the relation $w^\varepsilon(s)=u^\varepsilon(e^s-1)$, \eqref{4.2.4} can be rewritten as follows:
\begin{gather}
    w^\varepsilon(s^\varepsilon_n) \to u^\varepsilon_\infty
    \begin{cases}
      & \In H,\\[1ex]
      \weaklystar & \In L^\infty(\Omega).\\[1ex]
      \weaklystar & \In BV(\Omega),
    \end{cases}
    \ \ \as n\to\infty.\label{4.3.8}\noeqref{4.3.8}
  \end{gather}
Due to the time-rescaling in \eqref{4.3.1}, the coefficient $e^{-s}\widetilde{\beta}$ exhibits exponential decay. Once \eqref{4.3.2} is verified, the argument used in the proof of Main Theorem 3, with $\beta=e^{-s}\widetilde{\beta}$, shows that every $\omega$-limit point belongs to $S^\varepsilon_\infty$. Hence,
\begin{itemize}
\item[$\sharp$5)] $\omega(u^\varepsilon) \subset S^\varepsilon_\infty$ with $ \omega(u^\varepsilon) \supset {u^\varepsilon_\infty} \ne \emptyset $.
\end{itemize}
On the other hand, the strict convexity of $G$ assumed in (a-3) in (A7) implies that the energy $\mathcal F_\varepsilon$ is strictly convex. It follows that
\begin{itemize}
\item[$\sharp$6)] $S^\varepsilon_\infty$ is a singleton $\{u^\varepsilon_\infty\}$ consisting of the unique steady-state solution $u^\varepsilon_\infty \in S^\varepsilon_\infty $, equivalently, the unique minimizer of $\mathcal F_\varepsilon$.
\end{itemize}
Accordingly, the proof of Main Theorem 4 reduces to verifying \eqref{4.3.2}.
    \medskip

    To verify \eqref{4.3.2}, we first derive the basic energy estimates for (PS)$_\varepsilon$.
The corresponding energy inequality is given by
\begin{gather}
  \frac{1}{2} \int_0^s \int_\Omega e^{-\sigma} \widetilde{\alpha}(\sigma) | \partial_s w^\varepsilon(\sigma) |^2\,dxd\sigma + \int_0^s \int_\Omega e^{-\sigma} \widetilde{\beta}(\sigma) | \nabla \partial_s w^\varepsilon(\sigma) |^2 + \mathcal F_\varepsilon(w^\varepsilon(s))\\[1ex]
  \leq\frac{1}{\delta_\alpha} \int_0^s \int_\Omega e^\sigma | \widetilde{f}(\sigma) - f_\infty |^2\,dxd\sigma + \mathcal F_\varepsilon(u_0), \text{ for any } s \in [0,\infty).\label{4.3.9}\noeqref{4.3.9}
\end{gather}
        By the change of variables $t=e^\sigma-1$, one can see from (A6) that the first term on the right-hand side is uniformly bounded. Hence, following the same argument as in Lemma \ref{lem 4-1}, we obtain that:
\begin{itemize}
  \item[$\sharp$7)] $ e^{ -\frac{s}{2} } \partial_s w^\varepsilon \in \mathscr H $;
      \vspace{-1ex}
  \item[$\sharp$8)] $ e^{ -\frac{s}{2} } \sqrt{\widetilde{\beta}} \nabla \partial_s w^\varepsilon \in [\mathscr H]^N $;
      \vspace{-1ex}
  \item[$\sharp$9)] $ w^\varepsilon \in L^\infty(0,\infty; W^{1,1}(\Omega) \cap L^\infty(\Omega)) $.
\end{itemize}
    As noted earlier, the estimate in $\sharp$7) is weighted by $e^{-s/2}$ and therefore does not
yet imply \eqref{4.3.2}. To remove this weight, we use the linearized problem
    (LPS)$_\varepsilon$ together with $\sharp$7) and $\sharp$8).

Now, we divide the proof into two cases: $\varepsilon \in (0,1)$ and $\varepsilon = 0$.
\medskip\\
    \noindent \underline{\textit{Proof for $\varepsilon \in (0,1)$.}}\ \ 
    By Lemma \ref{lem 4-3}, the solution to (LPS)$_\varepsilon$ satisfies $v^\varepsilon=\partial_s w^\varepsilon$. Testing (LPS)$_\varepsilon$
with $\partial_s w^\varepsilon$ and using the monotonicity of
$\partial\gamma_\varepsilon$ together with (A7), we obtain
\begin{gather*}
  \frac{d}{ds} \left\{ \norm{e^{ -\frac{s}{2} } \sqrt{\widetilde{\alpha}(s)} \partial_s w^\varepsilon(s) }_H^2 + \norm{e^{ -\frac{s}{2} } \sqrt{\widetilde{\beta}(s)} \nabla \partial_s w^\varepsilon(s) }_{[H]^N}^2 \right\} + C_{g^\prime} \norm{\partial_s w^\varepsilon(s)}_H^2\\[1ex]
  \leq (1 + C_\alpha + C_\beta) \left\{ \norm{e^{ -\frac{s}{2} } \sqrt{\widetilde{\alpha}(s)} \partial_s w^\varepsilon(s) }_H^2 + \norm{e^{ -\frac{s}{2} } \sqrt{\widetilde{\beta}(s)} \nabla \partial_s w^\varepsilon(s) }_{[H]^N}^2 \right\} + \frac{1}{ C_{g^\prime} } \norm{ \partial_s \widetilde{f}(s) }_H^2,\\[1ex]
 \forae s \in (0,\infty).
\end{gather*}
    Here, for any $ s \in [0, \infty) $, integrating this differential inequality over $(0,s)$ gives
\begin{gather}
  \norm{e^{ -\frac{s}{2} } \sqrt{\widetilde{\alpha}(s)} \partial_s w^\varepsilon(s) }_H^2 + \norm{e^{ -\frac{s}{2} } \sqrt{\widetilde{\beta}(s)} \nabla \partial_s w^\varepsilon(s) }_{[H]^N}^2 + C_{g^\prime} \int_0^s \norm{\partial_s w^\varepsilon (\sigma)}_H^2\,d\sigma\\[1ex]
  \leq \norm{ \sqrt{\alpha(0)} \partial_s w^\varepsilon (0) }_H^2 + \norm{ \sqrt{\beta(0)} \nabla \partial_s w^\varepsilon (0) }_{[H]^N}^2 + \frac{1}{C_{g^\prime}} \norm{\partial_s \widetilde{f}}_\mathscr H^2\\[1ex]
  + (1 + C_\alpha + C_\beta) \left\{ \norm{ e^{ -\frac{s}{2} } \sqrt{\widetilde{\alpha}} \partial_s w^\varepsilon }_\mathscr H^2 + \norm{ e^{ -\frac{s}{2} }\sqrt{\widetilde{\beta}} \nabla \partial_s w^\varepsilon }_{[\mathscr H]^N}^2 \right\}. 
    \hspace{2em}\label{4.3..10}\noeqref{4.3..10}
\end{gather}
Combining this estimate with $\sharp$7), $\sharp$8), assumption (a-4) in (A7), and the prescribed initial condition $\partial_s w^\varepsilon(0) = v^\varepsilon(0)$, we conclude that \eqref{4.3.2} holds for every $\varepsilon \in (0,1)$.

This completes the proof for $\varepsilon \in (0,1)$.
\medskip\\

\noindent \underline{\textit{Proof for $\varepsilon = 0$.}}\ \ 
In this case, the method used for $\varepsilon \in (0,1)$ does not apply, since the Euclidean norm $\gamma_0$ does not admit a second derivative, which prevents us from formulating the corresponding linearized problem.
However, \eqref{4.3.2} for $\varepsilon=0$ can be obtained by passing to the limit as $\varepsilon\to0$ in the estimates established for $\varepsilon\in(0,1)$. Indeed, from \eqref{4.3.9}, \eqref{4.3..10}, and the initial condition $\partial_s w^\varepsilon(0)=v^\varepsilon(0)$ in $H$, it follows that
\begin{align}
  &C_{g^\prime} \int_0^\infty \norm{\partial_s w^\varepsilon (\sigma)}_H^2\,d\sigma \leq \frac{1}{\delta_\alpha \land \delta_1} \bigl( \norm{ f(0) - g(u_0) }_H^2 + \mathcal L^N (\Omega) \bigr) + \frac{1}{C_{g^\prime}} \norm{\partial_s \widetilde{f}}_\mathscr H^2\\[1ex]
  &\hspace{4em}+ (1 + C_\alpha + C_\beta) \left\{ \frac{1}{\delta_\alpha} \norm{ f - f_\infty }_\mathscr H^2 + \sup_{\varepsilon \in (0,1)} \mathcal F_\varepsilon(u_0) \right\}, \text{ for any } \varepsilon \in (0,1),\hspace{3em}\label{4.3..11}\noeqref{4.3..11}
\end{align}
Here, we have used
\begin{align*}
  &\norm{\sqrt{\alpha(0)} \partial_s w^\varepsilon(0)}_H^2 + \norm{ \sqrt{\beta(0)} \nabla \partial_s w^\varepsilon(0) }_{[H]^N}^2\\[1ex]
 &\hspace{2em}= \bigl( f(0) - g(u_0), \partial_s w^\varepsilon(0) \bigr)_H - \bigl( \nabla \gamma_\varepsilon(\nabla u_0), \nabla \partial_s w^\varepsilon(0) \bigr)_{[H]^N},
\end{align*}
which implies
\begin{gather*}
  \norm{\sqrt{\alpha(0)} \partial_s w^\varepsilon(0)}_H^2 + \norm{ \sqrt{\beta(0)} \nabla \partial_s w^\varepsilon(0) }_{[H]^N}^2 \leq \frac{1}{\delta_\alpha \land \delta_1} \bigl( \norm{ f(0) - g(u_0) }_H^2 + \mathcal L^N (\Omega) \bigr),
\end{gather*}
together with
\begin{gather*}
  \norm{ e^{ -\frac{s}{2} } \sqrt{\widetilde{\alpha}} \partial_s w^\varepsilon }_\mathscr H^2 + \norm{ e^{ -\frac{s}{2} }\sqrt{\widetilde{\beta}} \nabla \partial_s w^\varepsilon }_{[\mathscr H]^N}^2 \leq \frac{1}{\delta_\alpha} \norm{ f - f_\infty }_\mathscr H^2 + \sup_{\varepsilon \in (0,1)} \mathcal F_\varepsilon(u_0), \text{ for any }\varepsilon \in (0,1).
\end{gather*}
Estimate \eqref{4.3..11} is uniform with respect to $\varepsilon \in (0,1)$, so that
\begin{itemize}
  \item[$\flat$8)] $ \{ \partial_s w^\varepsilon \}_{\varepsilon \in (0,1)} \text{ is bounded in } \mathscr H $.
\end{itemize}

In the meantime, taking into account \eqref{4.3.1} and Remark \ref{eps-dep}, one can further derive:
\begin{gather}
    (t +1) \partial_t u^\varepsilon \to (t +1) \partial_t u^0\weakly \In L^2_\loc([0,\infty);H), 
    \\
    \mbox{equivalently, }
    \partial_s w^\varepsilon  \to \partial_s w^0 \weakly \In L^2_\loc([0,\infty);H), \mbox{ as $ \varepsilon \to 0 $.}\label{4.3..13}\noeqref{4.3..13}
\end{gather}
    $\flat$8) and \eqref{4.3..13} imply that \eqref{4.3.2} holds for $\varepsilon=0$.
    \medskip

    Thus, we conclude Main Theorem 4. 
\end{proof}

\section{Appendix}
In this appendix, we prove the lemma used in Main Theorem 2.
\begin{lemma}\label{lem 5-1}
  Let $0 \leq a \in C([0,\infty))$ be an unbounded function. Then, for any $k > 0$, there exists a constant $t_k \in [k,\infty)$ such that
\begin{gather*}
  a(t_k) \leq a(t_k + 1).
\end{gather*}
\end{lemma}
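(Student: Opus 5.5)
We argue by contradiction. Fix $k > 0$ and suppose that no $t \in [k,\infty)$ satisfies $a(t) \leq a(t+1)$; that is,
\begin{gather*}
  a(t+1) < a(t), \text{ for any } t \in [k,\infty).
\end{gather*}
The plan is to show that this forces $a$ to be bounded on $[0,\infty)$, which contradicts the unboundedness assumption.

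First, on the compact interval $[0, k+1]$ the continuous function $a$ is bounded. Set $A := \max_{[0,k+1]} a < \infty$.

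Next, we propagate this bound to all larger times by induction on integer shifts. Any $t > k+1$ can be written as $t = \tau + j$ with $\tau \in [k, k+1]$ and $j \in \mathbb N$; for instance, take $j := \lceil t - k - 1 \rceil$, so that $\tau = t - j \in [k, k+1]$. Applying the assumed strict decrease $j$ times, at the points $\tau, \tau+1, \dots, \tau+j-1$, all of which lie in $[k,\infty)$, gives
\begin{gather*}
  a(t) = a(\tau + j) < a(\tau + j - 1) < \cdots < a(\tau) \leq A.
\end{gather*}
Hence $0 \leq a \leq A$ on all of $[0,\infty)$. This contradicts the assumption that $a$ is unbounded, so there exists $t_k \in [k,\infty)$ with $a(t_k) \leq a(t_k + 1)$.

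There is no real obstacle here. The only point needing care is the decomposition $t = \tau + j$ with $\tau$ in the compact window $[k, k+1]$, which is where the continuity of $a$ (through its boundedness on compacts) enters. Nonnegativity of $a$ is not actually needed for this argument.

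Regarding how the lemma is used in Case II of Main Theorem 2: applying it with $k = 1, 2, \dots$ produces times $t_k \geq k$, from which one extracts a strictly increasing subsequence tending to $\infty$. One also has to note that $a(t) = |\sqrt{\beta(t)}\nabla u^\varepsilon(t)|_{[H]^N}$ is continuous. This follows from $u^\varepsilon \in W^{1,2}_{\mathrm{loc}}([0,\infty);V) \subset C([0,\infty);V)$ together with $\beta \in W^{1,\infty}(Q)$.
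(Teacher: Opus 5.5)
Your proof is correct and is essentially the paper's argument: contradiction, writing $t=\tau+j$ with $\tau$ in a unit window starting at $k$, chaining the strict decreases, and using boundedness of $a$ on a compact interval. One correction to your side remark: nonnegativity is in fact used, because the chain only bounds $a$ from above and it is $a\geq 0$ that turns this into boundedness (you yourself write $0\leq a\leq A$); without it, $a(t)=-t$ is a continuous unbounded function with $a(t+1)<a(t)$ for every $t$, so the lemma fails.
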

\begin{proof}
  We prove this lemma by contradiction. Let us suppose that there exists a constant $k_0 > 0$ satisfying
\begin{gather}
  a(t) > a(t + 1) \geq 0, \text{ for any } t \in [k_0,\infty).\label{5.1.2}\noeqref{5.1.2}
\end{gather}
    Besides, let us take any $t \in [k_0, \infty)$, together with $\delta \in [k_0, k_0 +1)$ and $ 0 \leq m \in \mathbb Z \cup \{ 0 \}$ such that $t = \delta + m$.
  Then, \eqref{5.1.2} yields 
    \begin{gather*} 
        a(\delta) > a(\delta + 1) > a(\delta + 2) > \cdots > a(\delta + m) = a(t). 
    \end{gather*} 
    Since $a$ is continuous, it is bounded on the compact interval $[k_0,k_0+1]$. Hence, 
    \begin{gather*} 
        0 \leq a(t) \leq \max_{\delta \in [k_0,k_0+1]} a(\delta) < \infty, \text{ for any } t \in [k_0,\infty). 
    \end{gather*} 
    Moreover, the continuity of $a$ also implies that $a$ is bounded on $[0,k_0+1]$. Therefore, $a$ is bounded on $[0,\infty)$, which contradicts the assumption that $a$ is unbounded. 

    Thus, Lemma \ref{lem 5-1} holds.
\end{proof}

\bibliography{sinst41}

@book {MR0773850,
    AUTHOR = {Attouch, H.},
     TITLE = {Variational Convergence for Functions and Operators},
    SERIES = {Applicable Mathematics Series},
 PUBLISHER = {Pitman (Advanced Publishing Program), Boston, MA},
      YEAR = {1984},
     PAGES = {xiv+423},
      ISBN = {0-273-08583-2},
   MRCLASS = {49-02 (49A50)},
  MRNUMBER = {0773850},
MRREVIEWER = {Joachim Naumann},
}

@book {MR2582280,
    AUTHOR = {Barbu, Viorel},
     TITLE = {Nonlinear Differential Equations of Monotone Types in {B}anach Spaces},
    SERIES = {Springer Monographs in Mathematics},
 PUBLISHER = {Springer, New York},
      YEAR = {2010},
     PAGES = {x+272},
      ISBN = {978-1-4419-5541-8},
   MRCLASS = {34-02 (34G20 34G25 35A16 47J35 47N20)},
  MRNUMBER = {2582280},
MRREVIEWER = {Jean Mawhin},
       DOI = {10.1007/978-1-4419-5542-5},
       URL = {http://dx.doi.org/10.1007/978-1-4419-5542-5},
}

@book {MR0348562,
    AUTHOR = {Br\'ezis, Haim},
     TITLE = {Op\'erateurs Maximaux Monotones et Semi-groupes de Contractions dans les Espaces de {H}ilbert},
      NOTE = {North-Holland Mathematics Studies, No. 5. Notas de Matem\'atica (50)},
 PUBLISHER = {North-Holland Publishing Co., Amsterdam-London; American Elsevier Publishing Co., Inc., New York},
      YEAR = {1973},
     PAGES = {vi+183},
   MRCLASS = {47H05},
  MRNUMBER = {0348562},
MRREVIEWER = {Bruce Calvert},
}

@article {MR2096945,
    AUTHOR = {Giga, Yoshikazu and Kashima, Yohei and Yamazaki, Noriaki},
     TITLE = {Local solvability of a constrained gradient system of total
              variation},
   JOURNAL = {Abstr. Appl. Anal.},
  FJOURNAL = {Abstract and Applied Analysis},
      YEAR = {2004},
    NUMBER = {8},
     PAGES = {651--682},
      ISSN = {1085-3375},
   MRCLASS = {35K55 (26A45 35A15 35J20 35K90 49J45 58E20)},
  MRNUMBER = {2096945},
MRREVIEWER = {Koji Kikuchi},
       DOI = {10.1155/S1085337504311048},
       URL = {https://doi.org/10.1155/S1085337504311048},
}

@book {MR0775682,
    AUTHOR = {Giusti, Enrico},
     TITLE = {Minimal Surfaces and Functions of Bounded Variation},
    SERIES = {Monographs in Mathematics},
    VOLUME = {80},
 PUBLISHER = {Birkh\"auser Verlag, Basel},
      YEAR = {1984},
     PAGES = {xii+240},
      ISBN = {0-8176-3153-4},
   MRCLASS = {58E12 (49F10 53A10)},
  MRNUMBER = {0775682},
MRREVIEWER = {Helmut Kaul},
       DOI = {10.1007/978-1-4684-9486-0},
       URL = {http://dx.doi.org/10.1007/978-1-4684-9486-0},
}

@article {Kenmochi81,
   AUTHOR = {N. Kenmochi},
	TITLE = {Solvability of nonlinear evolution equations with time-dependent constraints and applications},
	JOURNAL = {Bull. Fac. Education, Chiba Univ. (\url{http://ci.nii.ac.jp/naid/110004715232})}, 
	VOLUME = {30}, 
	YEAR = {1981}, 
	PAGES = {1--87}, 
}

@article {MR1752970,
    AUTHOR = {Kobayashi, Ryo and Warren, James A. and Carter, W. Craig},
     TITLE = {A continuum model of grain boundaries},
   JOURNAL = {Phys. D},
  FJOURNAL = {Physica D. Nonlinear Phenomena},
    VOLUME = {140},
      YEAR = {2000},
    NUMBER = {1-2},
     PAGES = {141--150},
      ISSN = {0167-2789},
   MRCLASS = {74N05},
  MRNUMBER = {1752970},
       DOI = {10.1016/S0167-2789(00)00023-3},
       URL = {http://dx.doi.org/10.1016/S0167-2789(00)00023-3},
}

@incollection {MR1794359,
    AUTHOR = {Kobayashi, Ryo and Warren, James A. and Carter, W. Craig},
     TITLE = {Grain boundary model and singular diffusivity},
 BOOKTITLE = {Free boundary problems: theory and applications, {II}
              ({C}hiba, 1999)},
    SERIES = {GAKUTO Internat. Ser. Math. Sci. Appl.},
    VOLUME = {14},
     PAGES = {283--294},
 PUBLISHER = {Gakk\=otosho, Tokyo},
      YEAR = {2000},
   MRCLASS = {74N05 (35J60 35K55 35R35 74N20)},
  MRNUMBER = {1794359},
}

@article {MR0298508,
    AUTHOR = {Mosco, Umberto},
     TITLE = {Convergence of convex sets and of solutions of variational
              inequalities},
   JOURNAL = {Advances in Math.},
  FJOURNAL = {Advances in Mathematics},
    VOLUME = {3},
      YEAR = {1969},
     PAGES = {510--585},
      ISSN = {0001-8708},
   MRCLASS = {47B99 (46N05 52A40)},
  MRNUMBER = {0298508},
MRREVIEWER = {C. Fenske},
       DOI = {10.1016/0001-8708(69)90009-7},
       URL = {http://dx.doi.org/10.1016/0001-8708(69)90009-7},
}

@incollection {MR2232843,
    AUTHOR = {Shirakawa, Ken},
     TITLE = {Stability analysis for {A}llen-{C}ahn equations involving
              indefinite diffusion coefficients},
 BOOKTITLE = {Mathematical approach to nonlinear phenomena: modelling,
              analysis and simulations},
    SERIES = {GAKUTO Internat. Ser. Math. Sci. Appl.},
    VOLUME = {23},
     PAGES = {238--254},
 PUBLISHER = {Gakk\=otosho, Tokyo},
      YEAR = {2005},
   MRCLASS = {35K55 (34D30 34G25 35B35 82C26)},
  MRNUMBER = {2232843},
MRREVIEWER = {Enrico Valdinoci},
}

@article {MR0916688,
    AUTHOR = {Simon, Jacques},
     TITLE = {Compact sets in the space {$L^p(0,T;B)$}},
   JOURNAL = {Ann. Mat. Pura Appl. (4)},
  FJOURNAL = {Annali di Matematica Pura ed Applicata. Serie Quarta},
    VOLUME = {146},
      YEAR = {1987},
     PAGES = {65--96},
      ISSN = {0003-4622},
   MRCLASS = {46E40 (46E30)},
  MRNUMBER = {0916688},
MRREVIEWER = {James Bell Cooper},
       DOI = {10.1007/BF01762360},
       URL = {http://dx.doi.org/10.1007/BF01762360},
}

@book {MR1423808,
    AUTHOR = {Visintin, Augusto},
     TITLE = {Models of Phase Transitions},
    SERIES = {Progress in Nonlinear Differential Equations and their
              Applications},
    VOLUME = {28},
 PUBLISHER = {Birkh\"auser Boston, Inc., Boston, MA},
      YEAR = {1996},
     PAGES = {x+322},
      ISBN = {0-8176-3768-0},
   MRCLASS = {80A22 (35K05 49J45 82C26)},
  MRNUMBER = {1423808},
MRREVIEWER = {Peter G. Danilaev},
       DOI = {10.1007/978-1-4612-4078-5},
       URL = {http://dx.doi.org/10.1007/978-1-4612-4078-5},
}

@article {MR3661429,
    AUTHOR = {Colli, Pierluigi and Gilardi, Gianni and Nakayashiki, Ryota
              and Shirakawa, Ken},
     TITLE = {A class of quasi-linear {A}llen--{C}ahn type equations with
              dynamic boundary conditions},
   JOURNAL = {Nonlinear Anal.},
  FJOURNAL = {Nonlinear Analysis. Theory, Methods \& Applications. An
              International Multidisciplinary Journal},
    VOLUME = {158},
      YEAR = {2017},
     PAGES = {32--59},
      ISSN = {0362-546X},
   MRCLASS = {35K55 (35K59 82C26)},
  MRNUMBER = {3661429},
       DOI = {10.1016/j.na.2017.03.020},
       URL = {http://dx.doi.org/10.1016/j.na.2017.03.020},
}

@book {MR2033382,
    AUTHOR = {Andreu-Vaillo, Fuensanta and Caselles, Vicent and Maz\'on, Jos\'e
              M.},
     TITLE = {Parabolic quasilinear equations minimizing linear growth
              functionals},
    SERIES = {Progress in Mathematics},
    VOLUME = {223},
 PUBLISHER = {Birkh\"auser Verlag, Basel},
      YEAR = {2004},
     PAGES = {xiv+340},
      ISBN = {3-7643-6619-2},
   MRCLASS = {35-02 (35A15 35K55 47H20 49J10 49K10 94A08)},
  MRNUMBER = {2033382},
MRREVIEWER = {Gary M. Lieberman},
       DOI = {10.1007/978-3-0348-7928-6},
       URL = {http://dx.doi.org/10.1007/978-3-0348-7928-6},
}

@article {MR2746654,
    AUTHOR = {Giga, Mi-Ho and Giga, Yoshikazu},
     TITLE = {Very singular diffusion equations: second and fourth order
              problems},
   JOURNAL = {Jpn. J. Ind. Appl. Math.},
  FJOURNAL = {Japan Journal of Industrial and Applied Mathematics},
    VOLUME = {27},
      YEAR = {2010},
    NUMBER = {3},
     PAGES = {323-345},
      ISSN = {0916-7005},
   MRCLASS = {35K59 (35K25 62H35 74N05)},
  MRNUMBER = {2746654},
       DOI = {10.1007/s13160-010-0020-y},
       URL = {http://dx.doi.org/10.1007/s13160-010-0020-y},
}

@article {MR1712447,
    AUTHOR = {Kobayashi, R. and Giga, Y.},
     TITLE = {Equations with singular diffusivity},
   JOURNAL = {J. Statist. Phys.},
  FJOURNAL = {Journal of Statistical Physics},
    VOLUME = {95},
      YEAR = {1999},
    NUMBER = {5-6},
     PAGES = {1187--1220},
      ISSN = {0022-4715},
   MRCLASS = {82C31 (35K65 74N05 82C24)},
  MRNUMBER = {1712447},
       DOI = {10.1023/A:1004570921372},
       URL = {http://dx.doi.org/10.1023/A:1004570921372},
}

@book {MR932730,
    AUTHOR = {Vrabie, I. I.},
     TITLE = {Compactness methods for nonlinear evolutions},
    SERIES = {Pitman Monographs and Surveys in Pure and Applied Mathematics},
    VOLUME = {32},
      NOTE = {With a foreword by A. Pazy},
 PUBLISHER = {Longman Scientific \& Technical, Harlow; John Wiley \& Sons,
              Inc., New York},
      YEAR = {1987},
     PAGES = {xviii+325},
      ISBN = {0-582-00319-9},
   MRCLASS = {47H20 (34-02 35-02)},
  MRNUMBER = {932730},
}

@article{MR1847840,
  title={Stability for a parabolic variational inequality associated with total variation functional},
  author={Kenmochi, Nobuyuki and Shirakawa, Ken},
  journal={Funkcial. Ekvac.},
  volume={44},
  number={1},
  pages={119--137},
  year={2001},
  publisher={}
}

@article{MR1851862,
  title={A variational inequality for total variation functional with constraint},
  author={Kenmochi, Nobuyuki and Shirakawa, Ken},
  journal={Nonlinear Anal.},
  volume={46},
  number={3},
  pages={435--455},
  year={2001},
  publisher={}
}

@article {MR2028858,
    AUTHOR = {Vese, Luminita A. and Osher, Stanley J.},
     TITLE = {Modeling textures with total variation minimization and
              oscillating patterns in image processing},
      NOTE = {Special issue in honor of the sixtieth birthday of Stanley
              Osher},
   JOURNAL = {J. Sci. Comput.},
  FJOURNAL = {Journal of Scientific Computing},
    VOLUME = {19},
      YEAR = {2003},
    NUMBER = {1-3},
     PAGES = {553--572},
      ISSN = {0885-7474,1573-7691},
   MRCLASS = {49J10 (65M06 68T45 94A08)},
  MRNUMBER = {2028858},
MRREVIEWER = {Ilya\ S.\ Molchanov},
       DOI = {10.1023/A:1025384832106},
       URL = {https://doi.org/10.1023/A:1025384832106},
}

@article {MR2170510,
    AUTHOR = {Tsai, Yen-Hsi Richard and Osher, Stanley},
     TITLE = {Total variation and level set methods in image science},
   JOURNAL = {Acta Numer.},
  FJOURNAL = {Acta Numerica},
    VOLUME = {14},
      YEAR = {2005},
     PAGES = {509--573},
      ISSN = {0962-4929,1474-0508},
   MRCLASS = {94A08 (35A15 35K55 68T45 68U10)},
  MRNUMBER = {2170510},
MRREVIEWER = {Ram\'{o}n\ M.\ Rodr\'{\i}guez-Dagnino},
       DOI = {10.1017/S0962492904000273},
       URL = {https://doi.org/10.1017/S0962492904000273},
}

@article {MR4352617,
    AUTHOR = {Moll, Salvador and Shirakawa, Ken and Watanabe, Hiroshi},
     TITLE = {Kobayashi-{W}arren-{C}arter type systems with nonhomogeneous
              {D}irichlet boundary data for crystalline orientation},
   JOURNAL = {Nonlinear Anal.},
  FJOURNAL = {Nonlinear Analysis. Theory, Methods \& Applications. An
              International Multidisciplinary Journal},
    VOLUME = {217},
      YEAR = {2022},
     PAGES = {Paper No. 112722, 44},
      ISSN = {0362-546X,1873-5215},
   MRCLASS = {35K87 (35K67 35R06)},
  MRNUMBER = {4352617},
       DOI = {10.1016/j.na.2021.112722},
       URL = {https://doi.org/10.1016/j.na.2021.112722},
}

@article{aiki2023class,
  title={A Class of Initial-Boundary Value Problems Governed by Pseudo-Parabolic Weighted Total Variation Flows},
  author={Aiki, Toyohiko and Mizuno, Daiki and Shirakawa, Ken},
  journal={Advances in Mathematical Sciences and Applications},
  FJOURNAL = {Advances in Mathematical Sciences and Applications},
  VOLUME = {32},
  YEAR = {2023},
  PAGES = {311--341},
  ISSN = {1343-4373},
  URL = {https://mcm-www.jwu.ac.jp/~aikit/AMSA/pdf/abstract/2023/Top_2023_015.pdf},
}

@incollection {MR2571495,
    AUTHOR = {Shirakawa, Ken},
     TITLE = {Continuous dependence for solution classes of
              {E}uler-{L}agrange equations generated by linear growth
              energies},
 BOOKTITLE = {Nonlocal and abstract parabolic equations and their
              applications},
    SERIES = {Banach Center Publ.},
    VOLUME = {86},
     PAGES = {287--302},
 PUBLISHER = {Polish Acad. Sci. Inst. Math., Warsaw},
      YEAR = {2009},
      ISBN = {978-83-86806-05-8},
   MRCLASS = {35J60 (35J20 49K40)},
  MRNUMBER = {2571495},
MRREVIEWER = {Jianqing\ Chen},
       DOI = {10.4064/bc86-0-18},
       URL = {https://doi.org/10.4064/bc86-0-18},
}

@article {MR2368971,
    AUTHOR = {Caselles, Vicent and Chambolle, Antonin and Novaga, Matteo},
     TITLE = {The discontinuity set of solutions of the {TV} denoising
              problem and some extensions},
   JOURNAL = {Multiscale Model. Simul.},
  FJOURNAL = {Multiscale Modeling \& Simulation. A SIAM Interdisciplinary
              Journal},
    VOLUME = {6},
      YEAR = {2007},
    NUMBER = {3},
     PAGES = {879--894},
      ISSN = {1540-3459,1540-3467},
   MRCLASS = {94A08 (35J20 35J70 65J20 68U10)},
  MRNUMBER = {2368971},
MRREVIEWER = {Ryuichi\ Ashino},
       DOI = {10.1137/070683003},
       URL = {https://doi.org/10.1137/070683003},
}

@article {MR1883415,
    AUTHOR = {Andreu, F. and Caselles, V. and D\'iaz, J. I. and Maz\'on, J.
              M.},
     TITLE = {Some qualitative properties for the total variation flow},
   JOURNAL = {J. Funct. Anal.},
  FJOURNAL = {Journal of Functional Analysis},
    VOLUME = {188},
      YEAR = {2002},
    NUMBER = {2},
     PAGES = {516--547},
      ISSN = {0022-1236,1096-0783},
   MRCLASS = {35K55 (35K20)},
  MRNUMBER = {1883415},
MRREVIEWER = {Qing\ Fang},
       DOI = {10.1006/jfan.2001.3829},
       URL = {https://doi.org/10.1006/jfan.2001.3829},
}

@article {MR2139202,
    AUTHOR = {Bellettini, Giovanni and Caselles, Vicent and Novaga, Matteo},
     TITLE = {Explicit solutions of the eigenvalue problem {$-{\rm
              div}\left(\frac{Du}{|Du|}\right)=u$} in {$\bold R^2$}},
   JOURNAL = {SIAM J. Math. Anal.},
  FJOURNAL = {SIAM Journal on Mathematical Analysis},
    VOLUME = {36},
      YEAR = {2005},
    NUMBER = {4},
     PAGES = {1095--1129},
      ISSN = {0036-1410,1095-7154},
   MRCLASS = {35P30 (35C05 35J60 35K55 94A08)},
  MRNUMBER = {2139202},
MRREVIEWER = {Marino\ Belloni},
       DOI = {10.1137/S0036141003430007},
       URL = {https://doi.org/10.1137/S0036141003430007},
}

@article {MR2178065,
    AUTHOR = {Alter, F. and Caselles, V. and Chambolle, A.},
     TITLE = {A characterization of convex calibrable sets in {$\Bbb R^N$}},
   JOURNAL = {Math. Ann.},
  FJOURNAL = {Mathematische Annalen},
    VOLUME = {332},
      YEAR = {2005},
    NUMBER = {2},
     PAGES = {329--366},
      ISSN = {0025-5831,1432-1807},
   MRCLASS = {35J70 (35K65 49J40 52A20)},
  MRNUMBER = {2178065},
MRREVIEWER = {Jes\'us\ Hern\'andez},
       DOI = {10.1007/s00208-004-0628-9},
       URL = {https://doi.org/10.1007/s00208-004-0628-9},
}

@incollection {MR3363401,
    AUTHOR = {Rudin, Leonid I. and Osher, Stanley and Fatemi, Emad},
     TITLE = {Nonlinear total variation based noise removal algorithms},
      NOTE = {Experimental mathematics: computational issues in nonlinear
              science (Los Alamos, NM, 1991)},
   JOURNAL = {Phys. D},
  FJOURNAL = {Physica D. Nonlinear Phenomena},
    VOLUME = {60},
      YEAR = {1992},
    NUMBER = {1-4},
     PAGES = {259--268},
      ISSN = {0167-2789,1872-8022},
   MRCLASS = {94A08 (65D18 68U10)},
  MRNUMBER = {3363401},
       DOI = {10.1016/0167-2789(92)90242-F},
       URL = {https://doi.org/10.1016/0167-2789(92)90242-F},
}

@inproceedings{rudin1994total,
  title={Total variation based image restoration with free local constraints},
  author={Rudin, Leonid I and Osher, Stanley},
  booktitle={Proceedings of 1st international conference on image processing},
  volume={1},
  pages={31--35},
  year={1994},
  organization={IEEE}
}

@incollection {MR2028838,
    AUTHOR = {Bertalmio, M. and Caselles, V. and Roug\'e, B. and Sol\'e, A.},
     TITLE = {T{V} based image restoration with local constraints},
      NOTE = {Special issue in honor of the sixtieth birthday of Stanley
              Osher},
   JOURNAL = {J. Sci. Comput.},
  FJOURNAL = {Journal of Scientific Computing},
    VOLUME = {19},
      YEAR = {2003},
    NUMBER = {1-3},
     PAGES = {95--122},
      ISSN = {0885-7474,1573-7691},
   MRCLASS = {94A08 (65J20 68U10 90C90)},
  MRNUMBER = {2028838},
MRREVIEWER = {H.\ P.\ Dikshit},
       DOI = {10.1023/A:1025391506181},
       URL = {https://doi.org/10.1023/A:1025391506181},
}

@book {MR348562,
    AUTHOR = {Br\'ezis, H.},
     TITLE = {Op\'erateurs maximaux monotones et semi-groupes de
              contractions dans les espaces de {H}ilbert},
    SERIES = {North-Holland Mathematics Studies},
    VOLUME = {No. 5},
      NOTE = {Notas de Matem\'atica, No. 50. [Mathematical Notes]},
 PUBLISHER = {North-Holland Publishing Co., Amsterdam-London; American
              Elsevier Publishing Co., Inc., New York},
      YEAR = {1973},
     PAGES = {vi+183},
   MRCLASS = {47H05},
  MRNUMBER = {348562},
MRREVIEWER = {Bruce\ Calvert},
}

\end{document}